%2multibyte Version: 5.50.0.2960 CodePage: 65001

\documentclass[11pt,a4paper]{article}
%%%%%%%%%%%%%%%%%%%%%%%%%%%%%%%%%%%%%%%%%%%%%%%%%%%%%%%%%%%%%%%%%%%%%%%%%%%%%%%%%%%%%%%%%%%%%%%%%%%%%%%%%%%%%%%%%%%%%%%%%%%%%%%%%%%%%%%%%%%%%%%%%%%%%%%%%%%%%%%%%%%%%%%%%%%%%%%%%%%%%%%%%%%%%%%%%%%%%%%%%%%%%%%%%%%%%%%%%%%%%%%%%%%%%%%%%%%%%%%%%%%%%%%%%%%%
\usepackage{amsmath}
\usepackage{amsfonts}
\usepackage{eurosym}
\usepackage{geometry}
\usepackage{color}

\setcounter{MaxMatrixCols}{10}
%TCIDATA{OutputFilter=LATEX.DLL}
%TCIDATA{Version=5.50.0.2960}
%TCIDATA{Codepage=65001}
%TCIDATA{<META NAME="SaveForMode" CONTENT="1">}
%TCIDATA{BibliographyScheme=Manual}
%TCIDATA{Created=Monday, June 05, 2023 14:06:43}
%TCIDATA{LastRevised=Wednesday, December 13, 2023 10:08:01}
%TCIDATA{<META NAME="GraphicsSave" CONTENT="32">}
%TCIDATA{<META NAME="DocumentShell" CONTENT="Standard LaTeX\Blank - Standard LaTeX Article">}
%TCIDATA{CSTFile=40 LaTeX article.cst}

\newtheorem{theorem}{Theorem}[section]

\newtheorem{corollary}[theorem]{Corollary}

\newtheorem{lemma}[theorem]{Lemma}

\newtheorem{proposition}[theorem]{Proposition}
\newtheorem{remark}[theorem]{Remark}

\newenvironment{proof}[1][Proof]{\noindent\textbf{#1.} }{\ \rule{0.5em}{0.5em}}

\begin{document}

\title{Partial balayage for the Helmholtz equation}
\date{}
\author{Stephen J. Gardiner and Tomas Sj\"{o}din}
\maketitle

\begin{abstract}
Kow, Larson, Salo and Shahgholian recently initiated the study of quadrature
domains for the Helmholtz equation and developed an associated theory of
partial balayage of measures. The present paper offers an alternative
approach to partial balayage in this context that yields stronger results.
Applications are given to quadrature domains and to a domain evolution
question that is analogous to Hele-Shaw flow.
\end{abstract}

\section{Introduction\label{Intro}\protect\footnotetext{%
\noindent 2020 Mathematics Subject Classification: 35J05 31B15. \newline
Keywords: Helmholtz equation, metaharmonic function, quadrature domain,
domain evolution}}

Let $\mu $ be a (positive) measure with compact support in Euclidean space $%
\mathbb{R}^{N}$ ($N\geq 2$) and $U^{\mu }$ be its Newtonian (or logarithmic,
if $N=2$) potential. Given a bounded open subset $\Omega $ containing $%
\mathrm{supp}(\mu )$, classical balayage sweeps $\mu $ out of $\Omega $ to
yield a measure $\mu _{1}$ on $\partial \Omega $ such that $U^{\mu
_{1}}<U^{\mu }$ in the components of $\Omega $ that are charged by $\mu $
and $U^{\mu _{1}}=U^{\mu }$ elsewhere (apart, possibly, from a polar subset
of $\partial \Omega $). The notion of partial balayage was introduced more
recently by Gustafsson and Sakai and expounded by them in \cite{GuSa}. (An
alternative account may be found in \cite{GaSj}.) Provided $\mu $ is
sufficiently concentrated it yields a bounded open set $\Omega $ containing $%
\mathrm{supp}(\mu )$ such that $U^{m|_{\Omega }}<U^{\mu }$ in $\Omega $ and $%
U^{m|_{\Omega }}=U^{\mu }$ elsewhere, where $m$ denotes Lebesgue measure on $%
\mathbb{R}^{N}$. The main differences from classical balayage are that the
measure $\mu $ is not completely swept out of $\Omega $ and that it is the
set $\Omega $, rather than the measure $\mu _{1}$, that is determined by the
sweeping process. The resulting open set $\Omega $ can be shown to satisfy%
\begin{equation}
\int sd\mu \leq \int_{\Omega }sdm\text{ \ \ \ for any integrable subharmonic
function }s\text{ on }\Omega ;  \label{sqdh}
\end{equation}%
that is, $\Omega $ is a quadrature domain with respect to $\mu $ for
subharmonic functions on $\Omega $. (Of course, equality will then hold in (%
\ref{sqdh}) for integrable harmonic functions on $\Omega $.) Further, by
considering a family of measures of the form $\mu _{t}=m|_{\Omega }+t\delta
_{z}$ ($t>0$), where $\Omega $ is a domain and $\delta _{z}$ is the unit
mass concentrated at a point $z$ of $\Omega $, partial balayage can be used
to study Hele-Shaw flow and related Laplacian growth processes.

Recently Kow et al. \cite{KLSS}, motivated by questions about inverse
scattering theory, investigated quadrature domains for solutions of the
Helmholtz equation 
\begin{equation}
(\Delta +k^{2})h=0,  \label{Hel}
\end{equation}%
where $k>0$, and developed a corresponding theory of partial balayage. The
purpose of the present paper is to offer an alternative approach to partial
balayage for this equation that significantly strengthens several aspects of
the theory and has applications to quadrature domains and domain evolution
problems in this context.

Solutions of (\ref{Hel}) will be called $k$-\textit{metaharmonic functions}.
More generally, any upper semicontinuous function $s$ which satisfies $%
(\Delta +k^{2})s\geq 0$ in the sense of distributions will be called $k$-%
\textit{metasubharmonic}, and a function $u$ will be called $k$-\textit{%
metasuperharmonic} if $-u$ is $k$-metasubharmonic. We will say that the $k$-%
\textit{maximum principle} holds on a bounded open set $\Omega $ if any
upper bounded $k$-metasubharmonic function $s$ on $\Omega $ which satisfies 
\begin{equation*}
\lim \sup_{x\rightarrow z}s(x)\leq 0\text{ \ for all }z\text{ in }\partial
\Omega \text{ apart, possibly, from a polar set}
\end{equation*}%
must also satisfy $s\leq 0$ on $\Omega $. (It is important that we use the
value $0$ in this definition.) Theorem 1.1 in \cite{BNV}, which concerns
more general elliptic operators, shows that the $k$-maximum principle holds
on $\Omega $ if and only if $k^{2}<\lambda _{1}(\Omega )$, where%
\begin{equation}
\lambda _{1}(\Omega )=\max \left\{ \lambda >0:\text{there is a positive }%
\sqrt{\lambda }\text{-metasuperharmonic function on }\Omega \right\} .
\label{lambda1}
\end{equation}%
In Proposition \ref{MP} below we will provide an alternative proof of this
equivalence that uses only tools from classical potential theory. It will be
sufficient to do this for bounded domains $\Omega $, since the result for
bounded open sets then follows by considering each component separately.
Propositions \ref{equiv} and \ref{evnoncon1} will explain this point and
also why $\lambda _{1}$ agrees with the usual formula for the first
eigenvalue of the Laplacian when $\Omega $ is connected.

Radial solutions of (\ref{Hel}) on $\mathbb{R}^{N}\backslash \{0\}$\ are of
the form $h(x)=|x|^{-\alpha }f(k|x|)$, where 
\begin{equation*}
\alpha =(N-2)/2
\end{equation*}%
and $f$ satisfies Bessel's equation of order $\alpha $, namely 
\begin{equation*}
t^{2}f^{\prime \prime }(t)+tf^{\prime }(t)+\left( t^{2}-\alpha ^{2}\right)
f(t)=0\text{ \ \ \ }(t>0).
\end{equation*}%
Solutions to this last equation have the form $f(t)=aJ_{\alpha }(t)+bY_{\alpha
}(t)$, where $J_{\alpha }$ and $Y_{\alpha }$ are the Bessel functions of
order $\alpha $ of the first and second kind respectively (see Watson \cite%
{Wat}). Hence radial solutions of (\ref{Hel}) have the representation%
\begin{equation}
h(x)=|x|^{-\alpha }\left( aJ_{\alpha }(k|x|)+bY_{\alpha }(k|x|)\right) .
\label{radial}
\end{equation}%
If $b=-k^{\alpha }/(2^{\alpha +2}\pi ^{\alpha })$, then the above function
satisfies $-(\Delta +k^{2})h=\delta _{0}$ in the sense of distributions. In
particular, this is true of the function 
\begin{equation*}
\Psi _{k}(x)=b|x|^{-\alpha }Y_{\alpha }(k|x|),
\end{equation*}%
and we define the $k$\textit{-potential} of a compactly supported measure $%
\mu $ by 
\begin{equation*}
U_{k}^{\mu }(x)=\int \Psi _{k}(x-y)d\mu (y).
\end{equation*}

Next we define the collection 
\begin{equation*}
\mathcal{F}_{k}(\mu )=\left\{ v\in \mathcal{D}^{\prime }({\mathbb{R}}%
^{N}):-(\Delta +k^{2})v\leq 1,\text{ \ }v\leq U_{k}^{\mu }\text{ and }%
\{v<U_{k}^{\mu }\}\text{ is bounded}\right\} .
\end{equation*}%
If $\mathcal{F}_{k}(\mu )\neq \emptyset $, then (as we will see later)
standard arguments show that $\mathcal{F}_{k}(\mu )$ has a largest element,
which has an upper semicontinuous representative. We denote this function by 
$V_{k}^{\mu }$ and define the open set $\omega _{k}(\mu )=\{V_{k}^{\mu
}<U_{k}^{\mu }\}$. The measure $\mathcal{B}_{k}(\mu )=-(\Delta
+k^{2})V_{k}^{\mu }$ is called the $k$\textit{-partial balayage of }$\mu $%
\textit{\ onto }$m$. Its structure is described in the following result.

\begin{theorem}
\label{Str0}If $\mathcal{F}_{k}(\mu )\neq \emptyset $, then $\mathcal{B}%
_{k}(\mu )=m|_{\omega _{k}(\mu )}+\mu |_{\omega _{k}(\mu )^{c}}\leq m$.
\end{theorem}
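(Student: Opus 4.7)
My strategy combines three ingredients: $V_k^\mu \in \mathcal{F}_k(\mu)$, the maximality of $V_k^\mu$, and the identity $V_k^\mu = U_k^\mu$ off $\omega_k(\mu)$. The first gives at once the bound $\mathcal{B}_k(\mu) \leq m$ on $\mathbb{R}^N$. It remains to identify $\mathcal{B}_k(\mu)$ as $m$ on $\omega_k(\mu)$ (by saturation) and as $\mu$ on $\omega_k(\mu)^c$ (by coincidence).

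For saturation on $\omega_k(\mu)$, I argue by contradiction through a local perturbation. Suppose $\mathcal{B}_k(\mu) \ne m$ on some open ball $B$ with $\overline{B} \subset \omega_k(\mu)$. Shrinking $B$, I may arrange that $k^2 < \lambda_1(B)$ and that $V_k^\mu \leq U_k^\mu - \eta$ on $B$ for some $\eta > 0$. Let $w$ solve $-(\Delta+k^2)w = 1$ in $B$ with $w = V_k^\mu$ on $\partial B$. Since $-(\Delta+k^2)(V_k^\mu - w) = \mathcal{B}_k(\mu) - 1 \leq 0$ in $B$ with zero boundary trace, the $k$-maximum principle yields $V_k^\mu \leq w$ in $B$; a standard size estimate on the small ball $B$ then gives $w - V_k^\mu \leq \eta$, so that $w \leq U_k^\mu$ on $B$. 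Set $u = w$ on $B$ and $u = V_k^\mu$ on $B^c$. A Green's-identity computation across $\partial B$ contributes to $-(\Delta+k^2)u$ a surface term $(\partial_n w - \partial_n V_k^\mu)\,\delta_{\partial B}$, and this is non-positive because $w - V_k^\mu \geq 0$ in $B$ vanishes on $\partial B$ and therefore has outward normal derivative $\leq 0$ there. Hence $u \in \mathcal{F}_k(\mu)$ with $u \geq V_k^\mu$ and $u > V_k^\mu$ on a set of positive measure in $B$, contradicting the maximality of $V_k^\mu$.

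On $\omega_k(\mu)^c$, since $V_k^\mu = U_k^\mu$ there, the distribution $\mathcal{B}_k(\mu) - \mu = -(\Delta+k^2)(V_k^\mu - U_k^\mu)$ vanishes on $\mathrm{int}(\omega_k(\mu)^c)$, so $\mathcal{B}_k(\mu) = \mu$ as measures on that open set. Extending this equality to all of $\omega_k(\mu)^c$ amounts to ruling out a surface concentration of $\mathcal{B}_k(\mu) - \mu$ on $\partial \omega_k(\mu)$; together with the Green's-identity step used above, this is the main technical obstacle, and it requires enough regularity of $V_k^\mu$ to have well-defined pointwise normal derivatives---typically the obstacle-problem regularity of class $C^{1,1}_{\mathrm{loc}}$ or $W^{2,p}_{\mathrm{loc}}$. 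I expect this regularity to follow from local $L^p$-estimates for $-(\Delta+k^2)$ on balls where the $k$-maximum principle applies, combined with the already established bounds $-(\Delta+k^2)V_k^\mu \leq 1$ and $V_k^\mu \leq U_k^\mu$.
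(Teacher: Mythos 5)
Your first two steps track the paper's argument: the bound $\mathcal{B}_k(\mu)\le m$ is immediate from $V_k^\mu\in\mathcal{F}_k(\mu)$, and the saturation $\mathcal{B}_k(\mu)=m$ on $\omega_k(\mu)$ is obtained in the paper exactly by a Poisson-modification-in-small-balls argument of the kind you describe. One caveat there: your justification of the gluing via a classical Green's identity and pointwise normal derivatives of $V_k^\mu$ on $\partial B$ presupposes regularity you do not have; the paper avoids this by running the gluing through Kato's inequality (its Lemma 2.4 / Theorem 2.11), which gives $-(\Delta+k^2)u\le 1$ distributionally without any trace of $\nabla V_k^\mu$. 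That is a repair, not a change of strategy.

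The genuine gap is your third step. You correctly identify that the whole difficulty is to rule out a singular concentration of $\mathcal{B}_k(\mu)-\mu$ on $\partial\omega_k(\mu)$, but the route you propose --- obstacle-problem regularity $C^{1,1}_{\mathrm{loc}}$ or $W^{2,p}_{\mathrm{loc}}$ of $V_k^\mu$ --- cannot work at the stated level of generality and is in any case circular. The theorem is for arbitrary compactly supported measures $\mu$, which may be singular and may charge $\omega_k(\mu)^c$; then $V_k^\mu=U_k^{\mathcal{B}_k(\mu)}$ is not $W^{2,p}_{\mathrm{loc}}$ near $\mathrm{supp}(\mu|_{\omega_k(\mu)^c})$ unless one already knows $\mathcal{B}_k(\mu)\le m$ there --- which is part of what is being proved (note that the conclusion contains the nontrivial assertion $\mu|_{\omega_k(\mu)^c}\le m$, which your plan never produces). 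The paper's actual mechanism is a two-sided squeeze on $v:=U_k^\mu-V_k^\mu\ge 0$: on $\{v=0\}=\omega_k(\mu)^c$ one has $(-\Delta v)|_{\{v=0\}}=(-(\Delta+k^2)v)|_{\{v=0\}}=(\mu-\mathcal{B}_k(\mu))|_{\{v=0\}}$, and Kato's inequality forces this to be $\le 0$; hence $0\le(\mathcal{B}_k(\mu)-\mu)|_{\omega_k(\mu)^c}\le\mathcal{B}_k(\mu)\le m$, so this measure is absolutely continuous. On the other hand, the restriction of $\Delta v$ to the zero set of a nonnegative $\delta$-subharmonic function is singular with respect to Lebesgue measure --- this rests on Bourgain's theorem that harmonic measure is singular with respect to $m$ (see \cite{Bou}, \cite{GaSj}). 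Being both absolutely continuous and singular, $(\mathcal{B}_k(\mu)-\mu)|_{\omega_k(\mu)^c}$ vanishes. Without this (or an equivalent) ingredient, your step three remains an acknowledged but unresolved obstacle rather than a proof.
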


If $\mu \leq m$, then trivially $V_{k}^{\mu }=U_{k}^{\mu }$ and $\omega
_{k}(\mu )=\emptyset $. The next result establishes a lower bound for $%
\lambda _{1}(\omega _{k}(\mu ))$ when $\omega _{k}(\mu )\neq \emptyset $.

\begin{theorem}
\label{mpsat0}Let $\mu $ be a compactly supported measure such that $%
\mathcal{F}_{k}(\mu )\neq \emptyset $. If $\omega _{k}(\mu )\neq \emptyset $%
, then $\lambda _{1}(\omega _{k}(\mu ))\geq k^{2}$.
\end{theorem}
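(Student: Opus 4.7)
I argue by contradiction. Set $\omega := \omega_k(\mu)$ and suppose $\omega \ne \emptyset$ but $\lambda_1(\omega) < k^2$. The plan is to construct $v \in \mathcal{F}_k(\mu)$ that strictly exceeds $V_k^\mu$ at some point, contradicting the maximality of $V_k^\mu$ in $\mathcal{F}_k(\mu)$.

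The first step is to pass to a relatively compact, smoothly bounded subdomain $\omega'$ of $\omega$ with $\overline{\omega'} \subset \omega$ and $\lambda_1(\omega') < k^2$. Such $\omega'$ is available by the monotonicity of $\lambda_1$ under interior exhaustion: since $\lambda_1(\omega) < k^2$, the variational characterisation in \eqref{lambda1} yields a $u_0 \in C_c^\infty(\omega)$ with $\int |\nabla u_0|^2 < k^2 \int u_0^2$, and one may take $\omega'$ to be any smooth neighbourhood of $\mathrm{supp}\,u_0$ inside $\omega$, so that $u_0 \in H^1_0(\omega')$ shows $\lambda_1(\omega') < k^2$ by the Rayleigh quotient.

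Let $\phi > 0$ be the principal Dirichlet eigenfunction of $-\Delta$ on $\omega'$, normalised by $\|\phi\|_\infty = 1$, and extend $\phi$ by zero to $\tilde\phi$ on $\mathbb{R}^N$. Integration by parts against a test function (using the vanishing trace of $\phi$ on $\partial\omega'$) gives the distributional identity
$$
-(\Delta+k^2)\tilde\phi \;=\; (\lambda_1(\omega')-k^2)\,\phi\cdot 1_{\omega'} \;+\; (\partial_n\phi)\big|_{\partial\omega'}\,d\sigma,
$$
in which both summands are nonpositive: the first because $\lambda_1(\omega') < k^2$ and $\phi > 0$; the second because $\phi > 0$ inside $\omega'$ and vanishes on $\partial\omega'$, so the outward normal derivative satisfies $\partial_n\phi \le 0$. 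Hence $-(\Delta+k^2)\tilde\phi \le 0$ on $\mathbb{R}^N$ as a distribution. Now set $v_\epsilon := V_k^\mu + \epsilon\tilde\phi$. By Theorem~\ref{Str0}, $-(\Delta+k^2)v_\epsilon = \mathcal{B}_k(\mu) + \epsilon\bigl[-(\Delta+k^2)\tilde\phi\bigr] \le \mathcal{B}_k(\mu) \le m$, and $\{v_\epsilon < U_k^\mu\} \subset \overline\omega$ is bounded. The delicate condition is $v_\epsilon \le U_k^\mu$, equivalently $\epsilon\tilde\phi \le \rho := U_k^\mu - V_k^\mu$. Outside $\overline{\omega'}$ both sides vanish, while on $\overline{\omega'}\subset\omega$ the function $\rho$ is lower semicontinuous and strictly positive, hence bounded below by some $c > 0$ by compactness; choosing $\epsilon \le c$ suffices. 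Then $v_\epsilon \in \mathcal{F}_k(\mu)$ and $v_\epsilon > V_k^\mu$ on $\omega'$, contradicting the maximality of $V_k^\mu$.

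The main obstacle is the passage to the interior subdomain $\omega'$. Without it, perturbing $V_k^\mu$ by a principal Dirichlet eigenfunction of $\omega$ itself would force one to compare the rates at which $\phi$ and $\rho$ vanish at the possibly irregular $\partial\omega$, which is a genuine Hopf-type issue. The passage to a smooth $\omega'$ strictly inside $\omega$ replaces that comparison with a routine compactness argument (strict positivity of $\rho$ and boundedness of $\phi$) and simultaneously legitimises the clean distributional identity for $-(\Delta+k^2)\tilde\phi$, with the sign of the surface term guaranteed by the smoothness of $\partial\omega'$.
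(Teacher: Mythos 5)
Your proof is correct, and it takes a genuinely different route from the paper's. The paper obtains Theorem \ref{mpsat0} as the special case $\rho=1$ of Theorem \ref{mpsat1}(ii): there one scales to $t\mu$ with $t<1$, uses Lemma \ref{meset} to see that $\mu$ charges every component of $\omega_k(t\mu)$, constructs via Proposition \ref{stbal} a nonnegative, non-eigenfunction $k$-metasuperharmonic function $w=U_k^{(1-t)\mu}+V_k^{t\mu}-V_k^{\mu}$ on $\omega_k(t\mu)$, invokes the equivalence (c)$\Leftrightarrow$(b) of Proposition \ref{MP} to get the \emph{strict} inequality $\lambda_1(\omega_k(t\mu))>k^2$, and finally lets $t\to1-$ using $\omega_k(t\mu)\nearrow\omega_k(\mu)$. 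You instead argue directly by contradiction: if $\lambda_1(\omega_k(\mu))<k^2$, the Rayleigh-quotient characterisation (Proposition \ref{equiv}, not the definition (\ref{lambda1}) itself, which is the only slip of attribution) produces a smooth $\omega'\Subset\omega_k(\mu)$ with $\lambda_1(\omega')<k^2$, and the zero-extension of its principal eigenfunction is a compactly supported nonnegative distribution with $-(\Delta+k^2)\tilde\phi\le 0$ (your boundary-term computation is right, with $\partial_n\phi\le0$); adding $\epsilon\tilde\phi$ to $V_k^\mu$ then stays in $\mathcal{F}_k(\mu)$ for small $\epsilon$ — the lower bound on $U_k^\mu-V_k^\mu$ over $\overline{\omega'}$ is legitimate since $U_k^\mu$ is l.s.c.\ and $V_k^\mu$ u.s.c.\ — contradicting maximality. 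Your argument is shorter and more self-contained: it needs only Theorem \ref{Str0} and Proposition \ref{equiv}, avoids Lemmas \ref{mon1} and \ref{meset}, Proposition \ref{stbal} and the limiting step, and extends verbatim to the weighted balayage $\mathcal{B}_k^\rho$. What it does not give is the refinement in Theorem \ref{mpsat1}(i) — the strict inequality $\lambda_1>k^2$ under the extra hypothesis $\mathcal{F}_k^\rho(\mu+\eta)\ne\emptyset$ — which the paper's machinery delivers and which is used later (e.g.\ in Theorem \ref{HS2}(a)). Two cosmetic points: take $\omega'$ connected (or restrict to the component carrying the principal eigenfunction) so that $\phi>0$ makes sense, and avoid reusing $\rho$ for $U_k^\mu-V_k^\mu$, since $\rho$ denotes the background density elsewhere in the paper.
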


Let $B_{r}(z)$ denote the open ball in $\mathbb{R}^{N}$ of centre $z$ and
radius $r$, and let $B_{r}=B_{r}(0)$. We note that $\lambda
_{1}(B_{r})=(j_{\alpha ,1}/r)^{2}$, where $j_{\nu ,1}$ denotes the first
positive zero of $J_{\nu }$. The $k$-maximum principle thus holds on $B_{r}$
if and only if $r<R_{k}$, where 
\begin{equation*}
R_{k}=j_{\alpha ,1}/k.
\end{equation*}%
The quantity 
\begin{equation}
c_{k}(r)=\left( 2\pi r/k\right) ^{N/2}J_{N/2}(kr)  \label{ck}
\end{equation}%
is also significant for us, because 
\begin{equation*}
c_{k}(r)h(z)=\int_{B_{r}(z)}h\,dm
\end{equation*}%
when $h$ is a $k$-metaharmonic function on a neighbourhood of $\overline{B}%
_{r}(z)$ (see Proposition \ref{MVP} below). Unlike the case of harmonic
functions, $c_{k}(\cdot )$ has variable sign. The simplest case of $k$%
-partial balayage, where $\mu $ is of the form $c\delta _{z}$, is described
in the next result.

\begin{proposition}
\label{pmb0}Let $c>0$. \newline
(a) Then $\mathcal{F}_{k}(c\delta _{z})\neq \emptyset $ if and only if $%
0<c\leq c_{k}(R_{k})$.\newline
(b) If $0<c\leq c_{k}(R_{k})$, then $\mathcal{B}_{k}(c\delta
_{z})=m|_{B_{r}(z)}$, where $r$ is the unique value in $[0,R_{k}]$ such that 
$c=c_{k}(r)$.
\end{proposition}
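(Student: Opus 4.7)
My plan is to build an explicit candidate for $V_k^{c\delta_z}$ in case~(b) and then deduce case~(a) from it together with Theorem~\ref{mpsat0}. After translation I may assume $z=0$.

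I first record the monotonicity of $c_k$. The Bessel identity $\tfrac{d}{dr}[r^{\nu}J_\nu(kr)]=kr^{\nu}J_{\nu-1}(kr)$ gives $c_k'(r)=(2\pi/k)^{N/2}k\,r^{N/2}J_\alpha(kr)$, which is strictly positive on $(0,R_k)$ and vanishes at $R_k$. Together with $c_k(0)=0$ this makes $c_k$ a continuous strictly increasing bijection from $[0,R_k]$ onto $[0,c_k(R_k)]$, so the $r$ in~(b) is well-defined and $c_k(R_k)=\max_{[0,R_k]}c_k$.

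For the existence half of (a) and all of (b), fix $r\in(0,R_k]$ and set $V:=U_k^{m|_{B_r}}$. The map $y\mapsto\Psi_k(x-y)$ is $k$-metaharmonic on a neighbourhood of $\overline B_r$ whenever $|x|>r$, so the $k$-mean value formula (Proposition~\ref{MVP}) yields $V(x)=c_k(r)\Psi_k(x)$ for such $x$. With $c:=c_k(r)$, this gives $V\equiv U_k^{c\delta_0}=:u$ on $B_r^c$, and by the $C^1$-regularity of the potential also $\nabla V=\nabla u$ on $\partial B_r$. Since $-(\Delta+k^2)V=m|_{B_r}\le m$ and $\{V<u\}\subseteq B_r$ is bounded, the only non-trivial requirement for $V\in\mathcal F_k(c\delta_0)$ is $V\le u$ on $B_r$. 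Setting $w:=u-V$, $w$ is radial on $B_r\setminus\{0\}$, tends to $+\infty$ at the origin, and satisfies the ODE $w''+\tfrac{N-1}{s}w'+k^2w=1$ with Cauchy data $w(r)=w'(r)=0$. Expanding $w(s)=1/k^2+A s^{-\alpha}J_\alpha(ks)+B s^{-\alpha}Y_\alpha(ks)$, the singularity pins down $B$, the Cauchy data pin down $A$, and the Wronskian $W(J_\alpha,Y_\alpha)(t)=2/(\pi t)$ shows the system is self-consistent exactly when $c=c_k(r)$, confirming the construction.

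The main technical obstacle is to extract the sign $w\ge0$ on $(0,r)$ from this explicit formula. For $r<R_k$ one has $\lambda_1(B_r)>k^2$, so $-(\Delta+k^2)$ admits a nonnegative Dirichlet Green's function $G^k_{B_r}$ on $B_r$; representing $w(x)=c\,G^k_{B_r}(x,0)-\int_{B_r}G^k_{B_r}(x,y)\,dy$, the nonnegativity of $w$ reduces to a radial monotonicity statement for the ratio $\int_{B_r}G^k_{B_r}(\cdot,y)\,dy\big/G^k_{B_r}(\cdot,0)$, whose boundary supremum equals $c$ by L'H\^opital and the Cauchy condition $w'(r)=0$. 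The endpoint case $r=R_k$ is handled by letting $r'\uparrow R_k$, so that $c_k(r')\uparrow c_k(R_k)$ and the corresponding $V$'s converge pointwise. Once $V\le u$ is established, the maximality of $V$ is immediate: for any $V'\in\mathcal F_k(c\delta_0)$, $V'-V\le0$ on $B_r^c$ (since $V=u\ge V'$ there) and $-(\Delta+k^2)(V'-V)\le0$ on $B_r$, so $V'-V$ is $k$-metasubharmonic with boundary lim-sup $\le0$, and Proposition~\ref{MP} applies on any $B_{r'}$ with $r'<R_k$ (again $r=R_k$ by approximation) to give $V'\le V$. This identifies $V=V_k^{c\delta_0}$, $\omega_k(c\delta_0)=B_r$, $\mathcal B_k(c\delta_0)=m|_{B_r}$, and proves (b) together with the "if" direction of (a).

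For the "only if" direction of (a), suppose $\mathcal F_k(c\delta_0)\ne\emptyset$ and set $V^*:=V_k^{c\delta_0}$, $\omega:=\omega_k(c\delta_0)$. Since $u$ is radial, uniqueness of the largest element forces $V^*$ and $\omega$ to be rotationally symmetric; Theorem~\ref{Str0} gives $\mathcal B_k(c\delta_0)\le m$, which forbids a point mass at $0$, so $V^*(0)<u(0)=\infty$ and $0\in\omega$. The component of $\omega$ through $0$ is therefore a ball $B_R$, and running the Cauchy-matching argument above on $\overline B_R$ identifies $V^*|_{\overline B_R}$ with $U_k^{m|_{B_R}}|_{\overline B_R}$ and forces $c=c_k(R)$. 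Finally, Theorem~\ref{mpsat0} gives $\lambda_1(B_R)\ge\lambda_1(\omega)\ge k^2$, hence $R\le R_k$, and the monotonicity of $c_k$ then yields $c=c_k(R)\le c_k(R_k)$.
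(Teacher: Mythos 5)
Your architecture (explicit candidate $V=U_k^{m|_{B_r}}$, maximality via the $k$-maximum principle, ``only if'' via radial symmetry plus the ODE matching and Theorem~\ref{mpsat0}) is reasonable, and your ``only if'' argument is in fact a legitimate alternative to the paper's (which instead applies Theorem~\ref{mpsat1}(i) to the failure of the $k$-maximum principle on $B_{R_k}$). But the central step is not actually proved. Everything hinges on the inequality $U_k^{m|_{B_r}}\le c_k(r)\,\Psi_k$ on $B_r$, i.e.\ $w\ge 0$, and you ``reduce'' this to an unproven radial monotonicity statement for the ratio $\int_{B_r}G^k_{B_r}(\cdot,y)\,dy\,/\,G^k_{B_r}(\cdot,0)$. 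That monotonicity is essentially equivalent to the quadrature inequality you are trying to establish, and nothing in your write-up establishes it; identifying the boundary limit of the ratio by L'H\^opital does not bound the ratio in the interior. The paper gets this inequality for free: $\Psi_k(x-\cdot)$ is $k$-metasuperharmonic, so the reverse of the first inequality in (\ref{MVPi2}) (which is proved for all $r\le R_k$) gives $\int_{B_r}\Psi_k(x-y)\,dm(y)\le c_k(r)\Psi_k(x)$ directly; this is exactly Proposition~\ref{mollexist} specialised to $\mu=c\delta_z$, which the paper cites for the ``if'' half of (a).

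A second, smaller gap is the endpoint $c=c_k(R_k)$, i.e.\ $r=R_k$, in the maximality step. You write that Proposition~\ref{MP} ``applies on any $B_{r'}$ with $r'<R_k$ (again $r=R_k$ by approximation)'', but this does not work: for $V'\in\mathcal F_k(c\delta_0)$ you only control $V'-V$ on $\partial B_{R_k}$, not on $\partial B_{r'}$ for $r'<R_k$, and the $k$-maximum principle genuinely fails on $B_{R_k}$ itself. This is precisely the delicate case, and the paper treats it separately: monotonicity (\ref{monb}) gives $B_{R_k}(x)\subset\omega_k(c\delta_x)$, so $V_{k,1}^{c\delta_x}-U_k^{m|_{B_{R_k}(x)}}$ is nonnegative and $k$-metaharmonic on $B_{R_k}(x)$; by Proposition~\ref{MP} it is either identically zero or a positive eigenfunction, and the eigenfunction alternative is excluded by the Hopf lemma (Lemma~\ref{basic}(c)) because $U_k^{c\delta_x}-U_k^{m|_{B_{R_k}(x)}}$ has vanishing normal derivative on $\partial B_{R_k}(x)$. (Alternatively one could pass to the limit $c'\uparrow c_k(R_k)$ using Lemma~\ref{mon1}, but that is not what your ``approximation'' says.) Your minor over-claim that $\omega_k(c\delta_0)=B_r$ exactly would also need strict positivity of $w$ on $B_r\setminus\{0\}$, though it is not needed for the stated conclusion $\mathcal B_k(c\delta_z)=m|_{B_r(z)}$.
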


More generally, if there exists $r \in (0,R_k]$ such that $\mu(B_r(x)) \le c_k(r)$ for all $x$, then $\mathcal{F}_k(\mu) \ne \emptyset$ (see Proposition \ref{mollexist} below, which is due to Simon Larson).

We say that a bounded open set $\Omega $ is a \textit{quadrature domain with
respect to }$\mu $ \textit{for }$k$-\textit{metasubharmonic functions} if $%
\mathrm{supp}(\mu )\subset \Omega $ and%
\begin{equation}
\int_{\Omega }sdm\geq \int sd\mu \ \ \ \text{for any integrable }k\text{%
-metasubharmonic function }s\text{ on }\Omega .  \label{sqd}
\end{equation}%
Our results on $k$-partial balayage are applicable to quadrature domains
because of the following theorem, which follows immediately from Proposition
7.5 in \cite{KLSS}. Its extra generality, in comparison with Theorem 1.6 of
that paper, arises from our new results about when $\mathcal{F}_{k}(\mu
)\neq \emptyset $.

\begin{theorem}
\label{qd0}Suppose that $\mathcal{F}_{k}(\mu )\neq \emptyset $ and $\mu
(\omega _{k}(\mu )^{c})=0$. Then (\ref{sqd}) holds when $\Omega =\omega
_{k}(\mu )$. In particular, if $\mathrm{supp}(\mu) \subset \Omega $, then $%
\Omega $ is a quadrature domain with respect to $\mu $ for $k$%
-metasubharmonic functions.
\end{theorem}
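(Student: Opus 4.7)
The strategy, as signalled in the text, is to combine Theorem \ref{Str0} with Proposition 7.5 of \cite{KLSS}. Under the hypothesis $\mu(\omega_k(\mu)^c)=0$, the second term in the conclusion of Theorem \ref{Str0} vanishes, so $\mathcal{B}_k(\mu)=m|_{\omega_k(\mu)}$. Writing $\Omega=\omega_k(\mu)$ and $w=U_k^\mu-V_k^\mu$, one has $w\geq 0$ vanishing outside $\overline{\Omega}$ (since $V_k^\mu=U_k^\mu$ off $\Omega$), and
\[
-(\Delta+k^2)w = \mu - m|_\Omega
\]
in the sense of distributions on $\mathbb{R}^N$.

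For an integrable $k$-metasubharmonic function $s$ on $\Omega$, the desired inequality (\ref{sqd}) then reduces to the formal computation
\[
\int s\,d\mu - \int_\Omega s\,dm = \langle s,-(\Delta+k^2)w\rangle = \langle -(\Delta+k^2)s, w\rangle \leq 0,
\]
the final step holding because $-(\Delta+k^2)s\leq 0$ on $\Omega$ while $w\geq 0$ has support in $\overline{\Omega}$. The main obstacle is making this pairing rigorous: $s$ is only upper semicontinuous, the inequality $-(\Delta+k^2)s\leq 0$ is distributional and holds only inside $\Omega$, and $w$ need not be smooth. This is exactly the content of Proposition 7.5 of \cite{KLSS}, typically handled by approximating $s$ from above by smooth $k$-metasubharmonic functions $s_\epsilon$ on $\Omega_\epsilon=\{x\in\Omega:\mathrm{dist}(x,\partial\Omega)>\epsilon\}$ (for example by mollification), applying Green's identity to the pair $(s_\epsilon,w)$ on $\Omega_\epsilon$ where the boundary contributions vanish because $w=0$ on $\partial\Omega$, and passing to the limit as $\epsilon\downarrow 0$ by monotone convergence.

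The \emph{in particular} clause is then immediate: if $\mathrm{supp}(\mu)\subset\omega_k(\mu)$, then automatically $\mu(\omega_k(\mu)^c)=0$, and (\ref{sqd}) with $\Omega=\omega_k(\mu)$ is precisely the defining condition for $\omega_k(\mu)$ to be a quadrature domain with respect to $\mu$ for $k$-metasubharmonic functions. The extra generality claimed over Theorem 1.6 of \cite{KLSS} is supplied not by a new argument here but by the weaker criteria (such as Proposition \ref{pmb0} and the remark following it) under which $\mathcal{F}_k(\mu)\neq\emptyset$ is known.
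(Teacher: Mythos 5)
Your reduction is the right one: by Theorem \ref{Str0} the hypothesis $\mu(\omega_k(\mu)^c)=0$ gives $\mathcal{B}_k(\mu)=m|_{\Omega}$, hence $w=U_k^\mu-U_k^{m|_\Omega}\ge 0$ with $w=0$ on $\Omega^c$ and $-(\Delta+k^2)w=\mu-m|_\Omega$, and the ``in particular'' clause is handled correctly. The gap is in the step you yourself flag as the main obstacle. When you apply Green's identity to $(s_\epsilon,w)$ on $\Omega_\epsilon$, the boundary terms sit on $\partial\Omega_\epsilon$, which lies strictly \emph{inside} $\Omega$, where $w>0$; the fact that $w=0$ on $\partial\Omega$ is of no direct use there, and $\int_{\partial\Omega_\epsilon}\left(s_\epsilon\,\partial_n w-w\,\partial_n s_\epsilon\right)d\sigma$ has no evident sign or limit. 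Making these terms disappear as $\epsilon\downarrow 0$ requires quantitative vanishing of $w$ and $\nabla w$ at $\partial\Omega$ (an $H^1_0$-type statement for $w$), together with a smooth exhaustion, since $\partial\Omega_\epsilon$ need not even be rectifiable; none of this is supplied. There are also unaddressed integrability issues: $-(\Delta+k^2)s$ is a negative measure whose total mass near $\partial\Omega$ may be infinite, and $w$ may equal $+\infty$ on $\mathrm{supp}(\mu)$, so the two sides of your formal identity are not obviously finite, let alone equal.

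Moreover, Proposition 7.5 of \cite{KLSS} is not ``exactly the content'' of that pairing. It is an $L^1(\omega)$-density theorem: the positive span of $\{\pm\partial^\beta\Psi_k(x-\cdot):x\notin\omega,\ |\beta|\le 1\}\cup\{-\Psi_k(x-\cdot):x\in\omega\}$ is dense in the cone of integrable $k$-metasubharmonic functions. The route the paper intends (and which [KLSS] carries out) is dual to yours and bypasses the boundary analysis entirely: one verifies (\ref{sqd}) directly for these generating potentials --- for $x\in\Omega$ the inequality is exactly $w(x)\ge 0$, and for $x\in\Omega^c$ one gets equality of $U_k^{m|_\Omega}$ and $U_k^\mu$ together with their first derivatives, because $w$ attains its minimum value $0$ there --- and then passes to general $s$ by density. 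So the theorem really is ``Structure Theorem plus citation''; your duality computation is a correct heuristic for \emph{why} Proposition 7.5 should hold, but as written it neither proves the pairing identity nor correctly identifies what the cited proposition provides.
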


The above hypothesis that $\mu (\omega _{k}(\mu )^{c})=0$ can be achieved by
modifying $\mu $ since, if $\omega _{k}(\mu )\neq \emptyset $ and we define $%
\mu _{1}=\mu |_{\omega _{k}(\mu )}$, then Theorem \ref{Str0} shows that $%
\omega _{k}(\mu _{1})=\omega _{k}(\mu )$. Also, the condition that $\mathrm{%
supp}(\mu) \subset \Omega $ will hold if, for example, $U_{k}^{\mu }=\infty $
on $\mathrm{supp}(\mu)$.

We can also apply our theory of partial balayage to obtain the following
domain evolution result inspired by Hele-Shaw flow. We will see later that,
if $\Omega $ is a bounded domain such that $\lambda _{1}(\Omega )>k^{2}$ and 
$z\in \Omega $, then there is a unique measure $\nu _{k,z}^{\Omega }$
supported on $\partial \Omega $ such that $U_{k}^{\delta _{z}}-U_{k}^{\nu
_{k,z}^{\Omega }}\geq 0$ with equality on $\Omega ^{c}$ apart, possibly,
from a polar subset of $\partial \Omega $.

\begin{theorem}
\label{HS0}Let $\Omega $ and $D$ be bounded domains such that $\overline{%
\Omega }\subset D$ and $\lambda _{1}(D)\geq k^{2}$, and let $\mu
_{t}=t\delta _{z}+m|_{\Omega }$ $(t>0)$, where $z\in \Omega $. Then the set 
\begin{equation*}
I=\{t>0:\mathcal{F}_{k}(\mu _{t})\neq \emptyset \}
\end{equation*}%
is a bounded interval of the form $(0,T]$. Further, \newline
(a) $\Omega ${$\subset \omega _{k}(\mu _{t})$ for all $t\in (0,T]$; \newline
(b) $\omega _{k}(\mu _{t})\nearrow \omega _{k}(\mu _{s})$ as $t\nearrow s$
for all $s\leq T$; \newline
(c) the following Hele-Shaw law holds: 
\begin{equation*}
\omega _{k}(\mu _{t+\varepsilon })=\omega _{k}\left( m|_{\omega _{k}(\mu
_{t})}+\varepsilon \nu _{k,z}^{\omega _{k}(\mu _{t})}\right) \text{\ \ }%
(0<t<t+\varepsilon \leq T).
\end{equation*}%
}
\end{theorem}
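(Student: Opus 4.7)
The plan is to establish, in order: downward-closure of $I$ (giving its interval structure), the inclusion $\omega_k(\mu_t)\subset D$ with the consequent boundedness of $I$, closedness of $I$ at its upper endpoint, and then (a), (b), (c). The unifying construction behind the interval structure and the monotonicity in~(b) is this: for $0<t<s$ with $s\in I$, the function
\begin{equation*}
v_1:=V_k^{\mu_s}-(s-t)\Psi_k(\cdot-z)
\end{equation*}
lies in $\mathcal{F}_k(\mu_t)$, since $-(\Delta+k^2)v_1=\mathcal{B}_k(\mu_s)-(s-t)\delta_z\leq m$ by Theorem~\ref{Str0}, $v_1\leq U_k^{\mu_t}$, and $\{v_1<U_k^{\mu_t}\}=\omega_k(\mu_s)$ is bounded. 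Maximality gives $V_k^{\mu_t}\geq v_1$, which forces $V_k^{\mu_t}=U_k^{\mu_t}$ wherever $V_k^{\mu_s}=U_k^{\mu_s}$, hence $\omega_k(\mu_t)\subset\omega_k(\mu_s)$.

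For the inclusion $\omega_k(\mu_t)\subset D$, I would use $\lambda_1(D)\geq k^2$ to solve the $k$-obstacle problem on $D$ with obstacle $U_k^{\mu_t}$ and Dirichlet data $U_k^{\mu_t}$, glue the solution to $U_k^{\mu_t}$ outside $D$, and invoke maximality to transfer the confinement of the coincidence set to $V_k^{\mu_t}$. Boundedness of $T:=\sup I$ then follows by testing against a positive $k$-metaharmonic function $\phi$ on $D$ (the Dirichlet principal eigenfunction of $D$ when $\lambda_1(D)=k^2$, or of a larger domain $D'\supset\overline D$ with $\lambda_1(D')=k^2$ when $\lambda_1(D)>k^2$). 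Integrating by parts against $u:=U_k^{\mu_t}-V_k^{\mu_t}\geq 0$, which vanishes in a neighbourhood of $\partial D$, yields $\int\phi\,d\mu_t=\int\phi\,d\mathcal{B}_k(\mu_t)$; combined with $\mathcal{B}_k(\mu_t)=m|_{\omega_k(\mu_t)}\leq m|_D$ this gives
\begin{equation*}
t\phi(z)+\int_\Omega\phi\,dm\leq\int_D\phi\,dm,
\end{equation*}
bounding $t$. Closedness of $I$ at $T$ is obtained by passing $t_n\nearrow T$ to a monotone limit of $V_k^{\mu_{t_n}}$, using weak-$*$ compactness of $\mathcal{B}_k(\mu_{t_n})\leq m|_D$.

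For (a), $z\in\omega_k(\mu_t)$ since $U_k^{\mu_t}$ has a pole at $z$ while $V_k^{\mu_t}$ is upper semicontinuous. If a component $C$ of $\omega_k(\mu_t)\cap\Omega$ were properly contained in $\Omega$, then $u$ would be $k$-metaharmonic on $C\setminus\{z\}$, non-negative, and vanish on $\partial C\cap\Omega$; Theorem~\ref{mpsat0} provides $\lambda_1(C)\geq k^2$, and the $k$-maximum principle (applied after subtracting a small multiple of $\Psi_k(\cdot-z)$ on the component containing $z$) forces $u\equiv 0$ on $C$, a contradiction. Part~(b) combines the monotonicity already shown with a dominated-convergence argument identifying the pointwise limit of $V_k^{\mu_{t_n}}$ with $V_k^{\mu_s}$, hence $\bigcup_n\omega_k(\mu_{t_n})=\omega_k(\mu_s)$.

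The main obstacle is~(c). Writing $\Omega_t:=\omega_k(\mu_t)$, (a) and Theorem~\ref{Str0} give $-(\Delta+k^2)V_k^{\mu_t}=m|_{\Omega_t}$ with $V_k^{\mu_t}=U_k^{\mu_t}$ outside $\Omega_t$; a uniqueness argument identifies $V_k^{\mu_t}$ with $U_k^{m|_{\Omega_t}}$. The defining property of $\nu_{k,z}^{\Omega_t}$ then yields $U_k^{m|_{\Omega_t}+\varepsilon\nu_{k,z}^{\Omega_t}}=U_k^{\mu_{t+\varepsilon}}$ outside $\Omega_t$, and the Hele-Shaw law follows from a uniqueness principle for partial balayage (two measures whose $k$-potentials coincide outside a common bounded set, and whose balayage densities agree there, have the same coincidence set). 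The subtle points, which I expect to lean on the paper's earlier technology, are the identification $V_k^{\mu_t}=U_k^{m|_{\Omega_t}}$ given the oscillatory decay of $\Psi_k$, and making the uniqueness criterion for partial balayage sharp enough to yield $\omega_k(\mu_{t+\varepsilon})=\omega_k(m|_{\Omega_t}+\varepsilon\nu_{k,z}^{\Omega_t})$.
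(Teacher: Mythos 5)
The central step of your argument for the interval structure — boundedness of $I$ and closedness at $T$ — rests on the claim that $\omega _{k}(\mu _{t})\subset D$, and this claim is false in general. The hypothesis $\lambda _{1}(D)\geq k^{2}$ does not confine the balayage set: take $D=B_{R_{k}}(0)$, $z\neq 0$, and $\Omega $ a tiny ball about $z$; then (by Proposition \ref{pmb0}) $\omega _{k}(\mu _{t})$ is essentially $B_{r(t)}(z)$ with $r(t)\rightarrow R_{k}$, which leaves $D$. Your gluing argument cannot rescue this: extending the obstacle-problem solution on $D$ by $U_{k}^{\mu _{t}}$ produces a positive singular measure on $\partial D$ in $-(\Delta +k^{2})v$ (the normal-derivative jump of the nonnegative difference), so the glued function is not in $\mathcal{F}_{k}(\mu _{t})$ unless the non-coincidence set is already compactly contained in $D$ — which is what you are trying to prove. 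Consequently the eigenfunction test $\int \phi \,d\mu _{t}=\int \phi \,d\mathcal{B}_{k}(\mu _{t})$ fails (the integration by parts needs $u=U_{k}^{\mu _{t}}-V_{k}^{\mu _{t}}$ to vanish near $\partial D$), and the weak-$\ast $ compactness bound $\mathcal{B}_{k}(\mu _{t_{n}})\leq m|_{D}$ used for $T\in I$ has no basis. The paper bounds $T$ quite differently: by Lemma \ref{monink}, $\omega _{0}^{1}(\mu _{t})\subset \omega _{k}^{1}(\mu _{t})$, and classical ($k=0$) partial balayage of $t\delta _{z}$ eventually contains any ball $B_{R}$, forcing $\lambda _{1}(\omega _{k}(\mu _{t}))\leq \lambda _{1}(B_{R})<k^{2}$ for large $t$, contradicting Theorem \ref{mpsat0}. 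The uniform bound on $\cup _{t<T}\omega _{k}(\mu _{t})$ needed for $T\in I$ comes from the moving-plane estimate (Theorem \ref{geom} and Remark \ref{closcons}), not from $D$. In the same vein, the role of $\lambda _{1}(D)\geq k^{2}$ is actually to prove $I\neq \emptyset $ — a nontrivial point you do not address: one must sweep $\delta _{z}$ onto the boundary of a smooth $W$ with $\overline{\Omega }\subset W\Subset D$ and invoke Proposition \ref{mollexist} to produce an element of $\mathcal{F}_{k}(\mu _{\varepsilon })$ for small $\varepsilon $.

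Your downward-closure construction $v_{1}=V_{k}^{\mu _{s}}-(s-t)\Psi _{k}(\cdot -z)$ is correct and is exactly Lemma \ref{mon1}, and part (a) is salvageable (more cleanly: $u\geq 0$ is $k$-metasuperharmonic and not identically zero on the domain $\Omega $, so $u>0$ there by the minimum principle). For (c), which you acknowledge is incomplete, the missing ingredients are Proposition \ref{stbal}, giving $V_{k}^{\mu _{t+\varepsilon }}\leq U_{k}^{m|_{\omega _{k}(\mu _{t})}+\varepsilon \delta _{z}}$, followed by the replacement of $\delta _{z}$ by its sweep $\nu _{k,z}^{\omega _{k}(\mu _{t})}$ via the $k$-maximum principle on $\omega _{k}(\mu _{t})$; the latter is legitimate only because $\lambda _{1}(\omega _{k}(\mu _{t}))>k^{2}$ \emph{strictly} for $t<T$ (Theorem \ref{mpsat1}(i)), a point your "uniqueness principle'' formulation glosses over.
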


In the next two sections we will assemble material about the potential
theory of the Helmholtz equation and then develop the theory of partial
balayage for this equation. We note that the paper \cite{KLSS} treated
partial balayage in the case where $\mu \in L^{\infty }$ (whence $\mu $ has
variable sign) and $\mu $ is supported in a small ball. Below we will treat
partial balayage onto a measure of the form $\rho m$, where $\rho \in
L^{\infty }({\mathbb{R}}^{N})$ and $c^{-1}\leq \rho \leq c$, for some $c>0$,
and will point out how this allows us also to cover the case where $\mu \in
L^{\infty }$. More importantly, our results establish the existence of the $%
k $-partial balayage of a much larger collection of measures $\mu $.

Theorems \ref{Str0} and \ref{mpsat0} will be proved in
Section \ref{PB}. Theorem \ref{qd0} is a special case of Theorem \ref{qd2},
and Theorem \ref{HS0} is a consequence of Theorem \ref{HS2} and Remark \ref%
{closcons}. Proposition \ref{pmb0} will be proved in Section \ref%
{constantrho} along with other explicit computations of $k$-partial balayage.

\section{Potential theory for the Helmholtz equation}

We collect below some useful properties of Bessel functions.

\begin{lemma}
\label{Bessel}(i) $\dfrac{d}{dt}\left( t^{\nu }J_{\nu }(t)\right) =t^{\nu
}J_{\nu -1}(t)$ and $\dfrac{d}{dt}\left( t^{-\nu }J_{\nu }(t)\right)
=-t^{-\nu }J_{\nu +1}(t)$;\newline
(ii) $\dfrac{d}{dt}\left( t^{\nu }Y_{\nu }(t)\right) =t^{\nu }Y_{\nu -1}(t)$
and $\dfrac{d}{dt}\left( t^{-\nu }Y_{\nu }(t)\right) =-t^{-\nu }Y_{\nu
+1}(t) $;\newline
(iii) $J_{\nu }(t)Y_{\nu }^{\prime }(t)-Y_{\nu }(t)J_{\nu }^{\prime
}(t)=2/(\pi t)$ when $t>0$;\newline
(iv) $J_{\nu }(t)Y_{\nu +1}(t)-Y_{\nu }(t)J_{\nu +1}(t)=-2/(\pi t)$ when $%
t>0 $;\newline
(v) $\Gamma (N/2)2^{\alpha }t^{-\alpha }J_{\alpha }(t)\rightarrow 1$ as $%
t\rightarrow 0+$;\newline
(vi) $2^{-\alpha }\pi t^{\alpha }Y_{\alpha }(t)\rightarrow -\Gamma (\alpha )$%
\ if $\alpha >0,$ and $(\pi /2)Y_{0}(t)/\log t\rightarrow 1$, as $%
t\rightarrow 0+$.
\end{lemma}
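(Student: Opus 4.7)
The plan is to derive all six items from the standard power-series definitions of $J_\nu$ and $Y_\nu$ (cf.\ Watson \cite{Wat}).

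For (i), I would start from $J_\nu(t)=\sum_{n=0}^{\infty}\frac{(-1)^n}{n!\,\Gamma(\nu+n+1)}(t/2)^{\nu+2n}$. Multiplying by $t^\nu$ (respectively $t^{-\nu}$) and differentiating termwise, a one-line index shift identifies the resulting series with $t^\nu J_{\nu-1}(t)$ (respectively $-t^{-\nu}J_{\nu+1}(t)$). Item (ii) then follows for non-integer $\nu$ from the defining relation $Y_\nu=(\cos(\nu\pi)J_\nu-J_{-\nu})/\sin(\nu\pi)$ by applying (i) to $J_{\pm\nu}$ and using $\cos((\nu-1)\pi)=-\cos(\nu\pi)$; the integer case follows by continuity in $\nu$.

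For (iii), I would invoke Abel's formula. Both $J_\nu$ and $Y_\nu$ satisfy Bessel's equation, which in self-adjoint form reads $(tf')'+(t-\nu^2/t)f=0$; hence the Wronskian $W(t)=J_\nu(t)Y_\nu'(t)-Y_\nu(t)J_\nu'(t)$ satisfies $(tW)'=0$, so $tW$ is constant. The value of that constant is pinned down from the small-$t$ asymptotics of $J_\nu$, $J_\nu'$, $Y_\nu$, $Y_\nu'$ (as recorded in (v)--(vi) for $\nu=\alpha$, and entirely analogously for general $\nu$; alternatively one may use the standard large-$t$ asymptotic expansions). Item (iv) is then immediate: rewriting the two halves of (i) as the recurrences $J_\nu'(t)=\nu J_\nu(t)/t-J_{\nu+1}(t)$ and $Y_\nu'(t)=\nu Y_\nu(t)/t-Y_{\nu+1}(t)$ and substituting into (iii) makes the $\nu/t$ terms cancel and yields (iv) up to a sign.

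Finally, (v) and (vi) come from reading off the leading-order term of the power series for $J_\alpha$ and the corresponding expansion of $Y_\alpha$ near the origin; the $\alpha=0$ case of $Y_0$ picks up the $(2/\pi)\log t$ contribution coming from the $\log(t/2)$ factor in the series representation of $Y_0$, and the identity $\Gamma(\alpha+1)=\Gamma(N/2)$ converts the leading $J_\alpha$ coefficient into the form stated. The only mildly delicate step in the whole lemma is fixing the constant $2/(\pi t)$ in (iii); the remainder is essentially series bookkeeping.
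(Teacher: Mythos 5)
Your derivation is correct, but it takes a genuinely different route from the paper: the paper's entire proof of this lemma is a citation, pointing to the specific pages of Watson \cite{Wat} where each identity appears (p.45, p.66, p.76(1), p.77(12) for (i)--(iv), p.40(8) for (v), and p.64(2), p.62(3), p.60(2) for (vi)). You instead reconstruct the identities from first principles: termwise differentiation of the defining series for (i), transfer to $Y_\nu$ via $Y_\nu=(\cos(\nu\pi)J_\nu-J_{-\nu})/\sin(\nu\pi)$ for (ii), Abel's identity for the Wronskian in (iii), the recurrences $J_\nu'=\nu J_\nu/t-J_{\nu+1}$ and its $Y$-analogue to pass from (iii) to (iv), and the leading terms of the small-$t$ expansions for (v)--(vi). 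All of these steps check out; in particular the normalisation $tW\to 2/\pi$ in (iii) does follow from the asymptotics $J_\nu\sim(t/2)^\nu/\Gamma(\nu+1)$ and $Y_\nu\sim-(\Gamma(\nu)/\pi)(2/t)^\nu$ (with the logarithmic case handled separately for $\nu=0$), and the sign in (iv) comes out right. Two small points to tidy: in (ii) you also need $\sin((\nu-1)\pi)=-\sin(\nu\pi)$, not only the cosine relation, to match the formula for $Y_{\nu-1}$; and the passage to integer $\nu$ by "continuity" should be read as local uniform convergence of $Y_\nu$ and its derivatives as $\nu$ tends to an integer, which is how $Y_n$ is defined in the first place. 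The trade-off is the usual one: the paper's citation is shorter and defers to the standard reference, while your version is self-contained at the cost of importing the series definitions and their convergence properties.
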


\begin{proof}
These properties may all be found in \cite{Wat}: we refer to p.45, p.66,
p.76(1) and p.77(12) for parts (i) - (iv), respectively, to p.40(8) for part
(v), and to p.64(2), p.62(3) and p.60(2) for part (vi).
\end{proof}

\bigskip

We define 
\begin{equation}
d_{k}(r)=(2\pi r)^{N/2}\frac{J_{\alpha }(kr)}{k^{\alpha }}  \label{dk}
\end{equation}%
and note from (\ref{ck}) and Lemma \ref{Bessel}(i) that 
\begin{equation}
c_{k}(r)=\int_{0}^{r}d_{k}(t)\,dt.  \label{ckdk}
\end{equation}%
The quantity $d_{k}$, like $c_{k}$, has variable sign, but both are positive
on the interval $(0,R_{k})$. The first part of the following result is
classical (cf. pp.288,289 of \cite{CH}), but we provide a proof so that we
can include also the case of $k$-metasubharmonic functions. We denote
surface area measure on a sphere by $\sigma $.

\begin{proposition}
\label{MVP} If $h$ is $k$-metaharmonic on $B_{R}(z)$ and $0<r<R$, then 
\begin{equation}
d_{k}(r)h(z)=\int_{\partial B_{r}(z)}h\,d\sigma \text{ \ \ and \ \ }%
c_{k}(r)h(z)=\int_{B_{r}(z)}h\,dm.  \label{MVPi}
\end{equation}%
Further, if $w$ is $k$-metasubharmonic on $B_{R}(z)$ and $0<r<R\leq R_{k}$,
then 
\begin{equation}
w(z)\leq \frac{1}{c_{k}(r)}\int_{B_{r}(z)}w\,dm\leq \frac{1}{d_{k}(r)}%
\int_{\partial B_{r}(z)}w\,d\sigma ,  \label{MVPi2}
\end{equation}%
and these last two expressions are increasing as functions of $r$ on $(0,R)$%
. The first inequality in (\ref{MVPi2}) remains valid when $r=R_{k}<R$.
\end{proposition}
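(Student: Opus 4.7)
The plan is to first prove the mean-value identities for $k$-metaharmonic $h$ via a Darboux-type radial ODE, then derive the $k$-metasubharmonic inequalities by comparing $w$ on each ball $B_r$ with its $k$-metaharmonic extension (which exists and is unique precisely because the $k$-maximum principle holds when $r<R_k$), and finally read off the monotonicity and volume-mean statements.

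Assume $z=0$ and let $M(r)$ denote the normalized spherical mean $\omega_N^{-1}r^{1-N}\int_{\partial B_r}h\,d\sigma$, where $\omega_N=2\pi^{N/2}/\Gamma(N/2)$. The divergence theorem together with $\Delta h=-k^{2}h$ gives
\begin{equation*}
(r^{N-1}M'(r))'=-k^{2}r^{N-1}M(r),
\end{equation*}
which is the radial form of $(\Delta+k^{2})M=0$. By the representation (\ref{radial}), regularity of $M$ at $0$ excludes the $Y_\alpha$-component, so $M(r)=c\,r^{-\alpha}J_\alpha(kr)$, with $c$ pinned down by $M(0)=h(0)$ via Lemma \ref{Bessel}(v). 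Multiplying through by $\omega_N r^{N-1}$ and simplifying using $N-1-\alpha=N/2$ and $2^{\alpha+1}=2^{N/2}$ yields $\int_{\partial B_r}h\,d\sigma=d_k(r)h(0)$. The ball identity follows by integration in $r$ and (\ref{ckdk}).

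For part two, fix $0<r_1<r_2<R\le R_k$. Because $r_2<R_k$ the $k$-maximum principle holds on $B_{r_2}$, which permits solving the Dirichlet problem on $B_{r_2}$ with boundary data $w|_{\partial B_{r_2}}$ (Perron-style, or via the explicit Poisson kernel for $\Delta+k^{2}$ on a ball); call the solution $h_{r_2}$. Applied to $w-h_{r_2}$, a $k$-metasubharmonic function on $B_{r_2}$ with boundary limsup $\le 0$ apart from a polar set, the principle forces $w\le h_{r_2}$ in $B_{r_2}$. Evaluating at $0$ gives $w(0)\le h_{r_2}(0)=\int_{\partial B_{r_2}}w\,d\sigma/d_k(r_2)$ by part one, and averaging over $\partial B_{r_1}$ (rather than evaluating at $0$) gives the monotonicity $S_w(r_1)/d_k(r_1)\le S_w(r_2)/d_k(r_2)$. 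Multiplying $w(0)\le S_w(t)/d_k(t)$ by $d_k(t)>0$ and integrating on $(0,r)$ with (\ref{ckdk}) produces $w(0)\le V_w(r)/c_k(r)$. The middle inequality in (\ref{MVPi2}) is the elementary integral fact that monotonicity of the ratio $S_w/d_k$, together with positivity of $d_k$, implies $V_w(r)/c_k(r)=\int_0^r S_w\bigl/\int_0^r d_k\le S_w(r)/d_k(r)$, and differentiating $V_w/c_k$ then shows it is increasing.

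The main obstacle I anticipate is the careful handling of upper semicontinuity and polar sets in the Perron step: one must ensure that $h_{r_2}$ agrees with $w$ on $\partial B_{r_2}$ apart from a $\sigma$-null set, that $w$ is $\sigma$-integrable on $\partial B_{r_2}$ (true for a.e.\ $r_2$ by Fubini), and that the monotonicity just obtained upgrades the identity from a.e.\ $r$ to every $r\in(0,R_k)$. The limiting case $r=R_k$ of the first inequality in (\ref{MVPi2}) then follows by letting $r\nearrow R_k$ in $w(0)\le V_w(r)/c_k(r)$, using continuity of $V_w$ in $r$ and $c_k(R_k)>0$.
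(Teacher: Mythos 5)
Your proof is correct in outline but follows a genuinely different route from the paper's. For the mean-value identities you use the Darboux ODE for the spherical mean $M(r)$ plus regularity at the origin to exclude the $Y_{\alpha}$-component; the paper instead applies Green's identity on $B_{r}\setminus\overline{B}_{\varepsilon}$ with the radial fundamental solution $\tilde{\Phi}_{k,r}$ that vanishes on $\partial B_{r}$, and your normalization via Lemma \ref{Bessel}(v) checks out. For the metasubharmonic part you invoke the $k$-maximum principle on $B_{r_{2}}$ ($r_{2}<R_{k}$) and compare $w$ with the Dirichlet solution $h_{r_{2}}$, whereas the paper mollifies $w$ and reruns the Green's-identity computation, getting the inequality from the sign of $-(\Delta+k^{2})w$ and the positivity of $\tilde{\Phi}_{k,r}$ on $B_{r}$ for $r<R_{k}$, and the monotonicity of $S_{w}/d_{k}$ from the pointwise monotonicity $\tilde{\Phi}_{k,s}\leq\tilde{\Phi}_{k,r}$ (Lemma \ref{Bessel}(iii)). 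Your approach is conceptually cleaner (mean value $=$ $k$-harmonic measure at the centre) but imports machinery — solvability of the Dirichlet problem for $\Delta+k^{2}$ on balls and the equivalence $\lambda_{1}(B_{r})>k^{2}\Leftrightarrow r<R_{k}$ — that the paper only develops after this proposition (Proposition \ref{MP} and the discussion of $H_{k,f}^{\Omega}$); this is not circular, but it makes the paper's argument more self-contained at this point in the exposition. Two steps in your write-up deserve more care: (i) the identity $h_{r_{2}}(0)=d_{k}(r_{2})^{-1}\int_{\partial B_{r_{2}}}w\,d\sigma$ does not follow directly "by part one", since $h_{r_{2}}$ is only $k$-metaharmonic on the open ball; you need to identify the $k$-harmonic measure of $B_{r_{2}}$ at its centre as $\sigma/d_{k}(r_{2})$ by rotational invariance plus testing against the entire radial solution $|x|^{-\alpha}J_{\alpha}(k|x|)$, together with $d_{k}(r_{2})>0$ for $r_{2}<R_{k}$, and for merely upper semicontinuous $w$ you must approximate the boundary data from above by continuous functions; (ii) the upper-semicontinuity/integrability issues you flag at the end are handled in the paper simply by a standard smoothing kernel, which keeps the function $k$-metasubharmonic and lets the inequalities pass to the limit — a cleaner fix than the a.e.-in-$r$ argument you sketch.
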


\begin{proof}
Without loss of generality we may assume that $z=0$. Let $h$ be $k$%
-metaharmonic on $B_{R}$. If $d_{k}(r)\neq 0$, then the function 
\begin{equation*}
\tilde{\Phi}_{k,r}(x)=\frac{k^{\alpha }\left\vert x\right\vert ^{-\alpha }}{%
2^{\alpha +2}\pi ^{\alpha }}\left\{ \frac{Y_{\alpha }(kr)}{J_{\alpha }(kr)}%
J_{\alpha }(k\left\vert x\right\vert )-Y_{\alpha }(k\left\vert x\right\vert
)\right\}
\end{equation*}%
is a radial fundamental solution to the Helmholtz equation that vanishes on $%
\partial B_{r}$. Further, denoting by $n$ the outward unit normal to a
sphere, we note from Lemma \ref{Bessel} that%
\begin{equation}
\nabla \tilde{\Phi}_{k,r}(x)\cdot \frac{x}{\left\vert x\right\vert }=\frac{%
k^{\alpha }\left\vert x\right\vert ^{-\alpha }(-k)}{2^{\alpha +2}\pi
^{\alpha }J_{\alpha }(kr)}\left\{ Y_{\alpha }(kr)J_{\alpha +1}(k\left\vert
x\right\vert )-J_{\alpha }(kr)Y_{\alpha +1}(k\left\vert x\right\vert
)\right\} \text{ \ \ \ }(x\neq 0),  \label{grad0}
\end{equation}%
whence 
\begin{equation}
\nabla \tilde{\Phi}_{k,r}(x)\cdot n\left\vert _{\partial B_{r}}\right. =-%
\frac{1}{d_{k}(r)}\text{ \ \ and \ \ }\sigma (\partial B_{\varepsilon
})\nabla \tilde{\Phi}_{k,r}(x)\cdot n\left\vert _{\partial B_{\varepsilon
}}\right. \rightarrow -1\text{ \ \ }(\varepsilon \rightarrow 0+).
\label{grad}
\end{equation}%
Thus, if $0<\varepsilon <r$,%
\begin{eqnarray*}
0 &=&\int_{B_{r}\backslash \overline{B}_{\varepsilon }}(-(\Delta +k^{2})%
\tilde{\Phi}_{k,r})h\,dm=\int_{B_{r}\backslash \overline{B}_{\varepsilon
}}\left( \tilde{\Phi}_{k,r}\Delta h-h\Delta \tilde{\Phi}_{k,r}\right) \,dm \\
&=&\int_{\partial B_{\varepsilon }}\left\{ h(\nabla \tilde{\Phi}_{k,r}\cdot
n)-(\nabla h\cdot n)\tilde{\Phi}_{k,r}\right\} d\sigma -\int_{\partial
B_{r}}h(\nabla \tilde{\Phi}_{k,r}\cdot n)d\sigma \\
&\rightarrow &-h(0)+\frac{1}{d_{k}(r)}\int_{\partial B_{r}}hd\sigma \text{ \
\ \ }(\varepsilon \rightarrow 0+).
\end{eqnarray*}%
This establishes the first identity in (\ref{MVPi}) when $d_{k}(r)\neq 0$.
It extends to the case where $d_{k}(r)=0$ on taking limits. The second
identity then follows, by (\ref{ckdk}).

Now suppose that $0<r<R\leq R_{k}$ and that $w$ is metasubharmonic on $B_{R}$%
. We may assume, by the use of a smoothing kernel, that $w$ is smooth on a
neighbourhood of $\overline{B}_{r}$, and we note from (\ref{grad}) that $%
\tilde{\Phi}_{k,r}\geq 0$ in $B_{r}$. Since also $\Delta w\geq -k^{2}w$, we
can argue as before to see that 
\begin{eqnarray}
0 &=&\int_{B_{r}\backslash \overline{B}_{\varepsilon }}(-(\Delta +k^{2})%
\tilde{\Phi}_{k,r})w\,dm\leq \int_{B_{r}\backslash \overline{B}_{\varepsilon
}}\left( \tilde{\Phi}_{k,r}\Delta w-w\Delta \tilde{\Phi}_{k,r}\right) \,dm 
\notag \\
&=&\int_{\partial B_{\varepsilon }}\left\{ w(\nabla \tilde{\Phi}_{k,r}\cdot
n)-(\nabla w\cdot n)\tilde{\Phi}_{k,r}\right\} d\sigma -\int_{\partial
B_{r}}w(\nabla \tilde{\Phi}_{k,r}\cdot n)d\sigma  \notag \\
&\rightarrow &-w(0)+\frac{1}{d_{k}(r)}\int_{\partial B_{r}}wd\sigma \text{ \
\ \ }(\varepsilon \rightarrow 0+).  \label{w00}
\end{eqnarray}%
The first inequality in (\ref{MVPi2}) now follows, by (\ref{ckdk}) (and
remains valid even when $r=R_{k}<R$). We next note from (\ref{w00}) that 
\begin{equation*}
\frac{1}{d_{k}(r)}\int_{\partial B_{r}}wd\sigma =w(0)+\int_{B_{r}}\tilde{\Phi%
}_{k,r}(\Delta +k^{2})wdm.
\end{equation*}%
%
%
%
%
%
%We also see from (\ref{grad0}) and 
Since, by Lemma \ref{Bessel}(iii), the quantity $Y_{\alpha }(ks)/J_{\alpha
}(ks)$ is increasing with $s$ on $(0,R_{k})$, it follows that, if $%
0<s<r<R_{k}$, then $0\leq \tilde{\Phi}_{k,s}\leq \tilde{\Phi}_{k,r}$ in $%
B_{s}$. Hence 
\begin{equation*}
\int_{B_{r}}\tilde{\Phi}_{k,r}(\Delta +k^{2})wdm\geq \int_{B_{s}}\tilde{\Phi}%
_{k,r}(\Delta +k^{2})wdm\geq \int_{B_{s}}\tilde{\Phi}_{k,s}(\Delta
+k^{2})wdm,
\end{equation*}%
so $\left( d_{k}(r)\right) ^{-1}\int_{\partial B_{r}}wd\sigma $ increases
with $r$ on $(0,R_{k})$. Further,%
\begin{align*}
\frac{1}{c_{k}(r)}\int_{B_{r}}wdm& =\frac{1}{c_{k}(r)}\int_{0}^{r}\left(
d_{k}(t)\left( \frac{1}{d_{k}(t)}\int_{\partial B_{t}}w\,d\sigma \right)
\right) \,dt \\
& \leq \frac{1}{c_{k}(r)}\int_{0}^{r}\left( d_{k}(t)\left( \frac{1}{d_{k}(r)}%
\int_{\partial B_{r}}w\,d\sigma \right) \right) \,dt=\frac{1}{d_{k}(r)}%
\int_{\partial B_{r}}w\,d\sigma ,
\end{align*}%
which completes the proof of (\ref{MVPi2}). Finally,%
\begin{align*}
\frac{d}{dr}\left( \frac{1}{c_{k}(r)}\int_{B_{r}}w\,dm\right) & =\frac{1}{%
c_{k}(r)}\int_{\partial B_{r}}wd\sigma -\frac{d_{k}(r)}{c_{k}(r)^{2}}%
\int_{B_{r}}w\,dm \\
& =\frac{d_{k}(r)}{c_{k}(r)}\left\{ \frac{1}{d_{k}(r)}\int_{\partial
B_{r}}wd\sigma -\frac{1}{c_{k}(r)}\int_{B_{r}}w\,dm\right\} \geq 0,
\end{align*}%
so $\left( c_{k}(r)\right) ^{-1}\int_{B_{r}}w\,dm$ also increases with $r$
on $(0,R)$.
\end{proof}

\bigskip

\begin{corollary}
Let $r=|x|$. Then 
\begin{equation}
U_{k}^{\sigma |_{\partial B_{t}}}(x)=\left\{ 
\begin{array}{lc}
b_{k}(t)r^{-\alpha }J_{\alpha }(kr) & (r\leq t) \\ 
&  \\ 
-\frac{\pi t^{N/2}J_{\alpha }(kt)}{2}r^{-\alpha }Y_{\alpha }(kr) & (r>t)%
\end{array}%
\right. ,  \label{C1}
\end{equation}%
where $b_{k}(t)=-\pi t^{N/2}Y_{\alpha }(kt)/2$. Also 
\begin{equation}
U_{k}^{m|_{B_{t}}}(x)=\left\{ 
\begin{array}{lc}
a_{k}(t)r^{-\alpha }J_{\alpha }(kr)-1/k^{2} & (r\leq t) \\ 
&  \\ 
-\frac{\pi t^{N/2}J_{N/2}(kt)}{2k}r^{-\alpha }Y_{\alpha }(kr) & (r>t)%
\end{array}%
\right. ,  \label{C2}
\end{equation}%
where 
\begin{equation*}
a_{k}(t)=\frac{-\pi t^{N/2}}{2k}Y_{N/2}(kt).
\end{equation*}
\end{corollary}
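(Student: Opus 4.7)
The plan is to exploit the radiality of both source measures. Both potentials are therefore radial in $x$ and, off the source support, satisfy $(\Delta + k^{2})u = 0$. By (\ref{radial}) each is a linear combination $c_{1} r^{-\alpha} J_{\alpha}(kr) + c_{2} r^{-\alpha} Y_{\alpha}(kr)$ on each of the two regions $\{r < t\}$ and $\{r > t\}$; the task is to identify the four constants in each case.

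I would first handle $U_{k}^{\sigma|_{\partial B_{t}}}$ on $\{r < t\}$. There the potential is bounded at the origin (the source is separated from $0$), whereas $r^{-\alpha} Y_{\alpha}(kr)$ is unbounded there by Lemma \ref{Bessel}(vi), so its coefficient must vanish. The coefficient $A$ of $r^{-\alpha} J_{\alpha}(kr)$ is then pinned down by evaluating at $x = 0$: on one hand $U_{k}^{\sigma|_{\partial B_{t}}}(0) = \sigma(\partial B_{t}) \cdot b\, t^{-\alpha} Y_{\alpha}(kt)$ with $b = -k^{\alpha}/(2^{\alpha+2} \pi^{\alpha})$, and on the other, Lemma \ref{Bessel}(v) gives $A r^{-\alpha} J_{\alpha}(kr) \to A k^{\alpha}/(2^{\alpha} \Gamma(N/2))$ as $r \to 0$. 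Matching and using $\alpha = (N-2)/2$ yields $A = b_{k}(t)$. For $\{r > t\}$ I would impose continuity at $r = t$ together with the normal-derivative jump $\partial_{r} u|_{r=t^{-}} - \partial_{r} u|_{r=t^{+}} = 1$, which comes from separating the pointwise and concentrated parts of the distributional identity $-(\Delta + k^{2}) U_{k}^{\sigma|_{\partial B_{t}}} = \sigma|_{\partial B_{t}}$ for a radial function with a kink at $r = t$. Differentiating via Lemma \ref{Bessel}(i)(ii) and invoking the Wronskian identity in part (iv), these two linear equations force the coefficients to be $0$ and $-\pi t^{N/2} J_{\alpha}(kt)/2$ respectively.

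For $U_{k}^{m|_{B_{t}}}$ I would use polar decomposition $U_{k}^{m|_{B_{t}}}(x) = \int_{0}^{t} U_{k}^{\sigma|_{\partial B_{s}}}(x) \, ds$ and substitute the formulas just proved. For $r > t$, only the outer (exterior) formula appears, and a single application of Lemma \ref{Bessel}(i) (with $N/2 = \alpha + 1$) evaluates $\int_{0}^{t} s^{N/2} J_{\alpha}(ks) \, ds$ to give the stated answer. For $r \le t$ I would split at $s = r$: the inner piece (from $s \in (0,r)$) contributes $\int_{0}^{r} s^{N/2} J_{\alpha}(ks) \, ds$ with primitive $k^{-1} s^{N/2} J_{N/2}(ks)$, and the outer piece (from $s \in (r,t)$) contributes $\int_{r}^{t} s^{N/2} Y_{\alpha}(ks) \, ds$ with primitive $k^{-1} s^{N/2} Y_{N/2}(ks)$, both by Lemma \ref{Bessel}(i)(ii). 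The $r$-dependent cross-term combining $J_{N/2}(kr) Y_{\alpha}(kr)$ with $Y_{N/2}(kr) J_{\alpha}(kr)$ collapses via Lemma \ref{Bessel}(iv) to $-1/k^{2}$, while the remaining $t$-dependent piece produces $a_{k}(t) r^{-\alpha} J_{\alpha}(kr)$.

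The main obstacle is the algebraic bookkeeping of the Wronskian cancellations, especially the emergence of the constant $-1/k^{2}$ from the interplay of $J_{N/2}, Y_{\alpha}$ and $Y_{N/2}, J_{\alpha}$ in the interior case of $U_{k}^{m|_{B_{t}}}$. Once one systematically uses $N/2 - \alpha = 1$ in tandem with Lemma \ref{Bessel}(iv), both computations go through cleanly.
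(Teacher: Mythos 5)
Your proposal is correct, and it reaches the formulas by a genuinely different route from the paper. The paper gets both exterior formulas in one stroke by applying the mean value identities of Proposition \ref{MVP} to $h=\Psi_{k}(x-\cdot)$ (which is $k$-metaharmonic on a neighbourhood of $\overline{B}_{t}$ when $|x|>t$), so that $U_{k}^{\sigma|_{\partial B_{t}}}(x)=d_{k}(t)\Psi_{k}(x)$ and $U_{k}^{m|_{B_{t}}}(x)=c_{k}(t)\Psi_{k}(x)$ there; the interior formulas are then the bounded radial solutions (of $(\Delta+k^{2})u=0$, resp.\ $(\Delta+k^{2})u=-1$, whence the particular solution $-1/k^{2}$) matched at $\partial B_{t}$, with the simplification of $a_{k}(t)$ via Lemma \ref{Bessel}(iv) and a continuity argument to cover $J_{\alpha}(kt)=0$. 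You instead pin down the interior of (\ref{C1}) by direct evaluation at the origin together with boundedness, recover the exterior from continuity plus the normal-derivative jump $\partial_{r}u|_{t^{-}}-\partial_{r}u|_{t^{+}}=1$ (the Wronskian in Lemma \ref{Bessel}(iv) making the $2\times 2$ system work out), and obtain (\ref{C2}) by integrating the shell formulas over radii, with the constant $-1/k^{2}$ emerging from the cross-term via Lemma \ref{Bessel}(iv). All of these steps are sound; I checked the jump normalization, the antiderivatives $k^{-1}s^{N/2}J_{N/2}(ks)$ and $k^{-1}s^{N/2}Y_{N/2}(ks)$, and the Wronskian cancellations, and they give exactly the stated coefficients. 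A minor advantage of your superposition argument for (\ref{C2}) is that it never divides by $J_{\alpha}(kt)$, so no separate continuity argument is needed when $J_{\alpha}(kt)=0$; the paper's route is shorter because it reuses Proposition \ref{MVP}, which has already been established.
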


\begin{proof}
We obtain the second formula in (\ref{C1}) by applying the first formula in (%
\ref{MVPi}) to the function $h=\Psi _{k}(x-\cdot )$ and using (\ref{dk}).
The first formula in (\ref{C1}) is the radial solution of (\ref{Hel}) in $%
B_{t}$ that has the matching value on $\partial B_{t}$. In the case where $%
J_{\alpha }(kt)\neq 0$ analogous reasoning yields (\ref{C2}) but with 
\begin{equation*}
a_{k}(t)=\frac{1}{t^{-\alpha }J_{\alpha }(kt)}\left( \frac{-\pi
tJ_{N/2}(kt)Y_{\alpha }(kt)}{2k}+\frac{1}{k^{2}}\right) .
\end{equation*}%
When $J_{\alpha }(kt)\neq 0$ we see from Lemma \ref{Bessel}(iv) that 
\begin{equation*}
a_{k}(t)=\frac{-\pi t^{N/2}}{2k}Y_{N/2}(kt),
\end{equation*}%
and a continuity argument shows that the latter formula remains valid when $%
J_{\alpha }(kt)=0$.
\end{proof}

\begin{lemma}
\label{maxsub} If $u$ and $v$ are $k$-metasubharmonic functions on $\Omega $%
, then so also is $\mathrm{max}\{u,v\}$.
\end{lemma}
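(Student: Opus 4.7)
My plan is to use the submean-value characterization of $k$-metasubharmonicity provided by Proposition \ref{MVP}. Upper semicontinuity of $w := \max\{u,v\}$ is immediate from the upper semicontinuity of $u$ and $v$, so the only task is to establish $(\Delta + k^{2})w \geq 0$ in $\mathcal{D}'(\Omega)$.

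By Proposition \ref{MVP}, for every $z \in \Omega$ and every $r$ with $0 < r < R_k$ and $\overline{B}_r(z) \subset \Omega$, we have $c_k(r) > 0$ and
\[u(z) \leq \frac{1}{c_k(r)} \int_{B_r(z)} u\, dm,\]
with the analogous inequality for $v$. Supposing without loss of generality that $u(z) \geq v(z)$, this yields
\[w(z) = u(z) \leq \frac{1}{c_k(r)} \int_{B_r(z)} u\, dm \leq \frac{1}{c_k(r)} \int_{B_r(z)} w\, dm,\]
so $w$ inherits the submean inequality on every sufficiently small ball.

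To pass from this submean property back to the distributional inequality, I would mollify. Let $(\phi_{\varepsilon})$ be a smooth, radially symmetric, nonnegative approximate identity and set $w_{\varepsilon} = w * \phi_{\varepsilon}$ on the inner $\varepsilon$-neighbourhood $\Omega_{\varepsilon}$. Since $u,v \in L^{1}_{\mathrm{loc}}(\Omega)$ (a standard consequence of their being $k$-metasubharmonic distributions), so is $w$, and $w_{\varepsilon}$ is smooth and, by Fubini, still satisfies the submean inequality on $\Omega_{\varepsilon}$ for small $r$. For any $C^{2}$ function $f$, a Taylor expansion of $f$ around $z$ combined with the small-$r$ asymptotic $c_k(r) = |B_r|\bigl(1 - k^{2} r^{2} / (2(N+2)) + O(r^{4})\bigr)$ (derived from the series expansion of $J_{N/2}$) gives
\[\frac{1}{c_k(r)} \int_{B_r(z)} f\, dm - f(z) = \frac{r^{2}}{2(N+2)} (\Delta + k^{2}) f(z) + O(r^{4})\]
as $r \to 0+$. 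Applied to $f = w_{\varepsilon}$, the submean inequality forces $(\Delta + k^{2}) w_{\varepsilon} \geq 0$ pointwise; letting $\varepsilon \to 0+$ and using $w_{\varepsilon} \to w$ in $\mathcal{D}'(\Omega)$ gives $(\Delta + k^{2}) w \geq 0$ distributionally. The main obstacle is verifying the Taylor coefficient $r^{2}/(2(N+2))$ of $(\Delta + k^{2}) f$ correctly; everything else is routine.
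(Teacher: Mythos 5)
Your argument is correct, but it is genuinely different from the one in the paper. The paper disposes of this lemma in one line by invoking Kato's inequality for the Laplacian (Brezis--Ponce \cite{BP}), which gives the pointwise lower bound $(\Delta +k^{2})\max \{u,v\}\geq (\Delta u+k^{2}u)\chi _{\{u\geq v\}}+(\Delta v+k^{2}v)\chi _{\{u<v\}}\geq 0$ directly at the level of measures. You instead prove the sub-mean-value inequality for $\max\{u,v\}$ on small balls (which follows at once from Proposition \ref{MVP} and the positivity of $c_{k}(r)$ for $r<R_{k}$), and then recover the distributional inequality by mollifying and Taylor-expanding; your expansion $c_{k}(r)=|B_{r}|\bigl(1-k^{2}r^{2}/(2(N+2))+O(r^{4})\bigr)$ and the resulting coefficient $r^{2}/(2(N+2))$ of $(\Delta +k^{2})f(z)$ are both correct, and the Fubini step for $w_{\varepsilon}$ and the passage $\varepsilon \to 0+$ in $\mathcal{D}'(\Omega)$ are routine as you say. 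What each approach buys: yours is self-contained and elementary, relying only on Proposition \ref{MVP} and classical calculus, and it implicitly establishes the converse sub-mean-value characterization of $k$-metasubharmonicity, which is of independent interest; the paper's route is much shorter and yields the finer localized form of the inequality (the restriction of $(\Delta+k^{2})\max\{u,v\}$ to $\{u\geq v\}$ and $\{u<v\}$ separately), a refinement the paper exploits again in the Structure Theorem and in Lemma \ref{meset}, where mere nonnegativity would not suffice. For the statement of the present lemma, however, your conclusion is exactly what is needed.
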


\begin{proof}
Kato's inequality for the Laplacian \cite{BP} (or see Corollary 2.3 of \cite%
{GaSj2} for a short alternative proof) tells us that 
\begin{equation*}
(\Delta +k^{2})\mathrm{max}\{u,v\}\geq (\Delta u+k^{2}u)\chi _{\{u\geq
v\}}+(\Delta v+k^{2}v)\chi _{\{u<v\}}.
\end{equation*}
\end{proof}

\bigskip

The corresponding result for the minimum of two $k$-metasuperharmonic
functions clearly also holds.

\begin{lemma}
\label{basic}(a) If $s$ is a$\ k$-metasubharmonic function on a domain $%
\Omega $ and $s\leq 0$, then either $s<0$ or $s\equiv 0$ in $\Omega $.%
\newline
(b) A $k$-metasubharmonic function $s$ cannot have a negative local maximum. 
\newline
(c) (Hopf lemma) Suppose that $u\in C^{1}(\overline{\Omega })$ is
non-negative and $k$-metasuperharmonic in the domain $\Omega $, where $%
\partial \Omega $ is of class $C^{2}$. If there is a boundary point $x$
where $u$ has a local minimum on $\overline{\Omega }$ and $\dfrac{\partial u%
}{\partial n}(x)=0$, then $u\equiv u(x)$ in $\Omega $.
\end{lemma}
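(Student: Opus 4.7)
My plan is to prove the three parts in sequence, each later part building on the earlier ones together with the sub-mean-value inequality in Proposition \ref{MVP}.

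For part (a), I would show that both $\{s<0\}$ and $\{s=0\}$ are open in $\Omega$; since $\Omega$ is connected, one of them must be empty. The set $\{s<0\}$ is open by upper semicontinuity. For $\{s=0\}$, fix $z$ with $s(z)=0$ and choose any ball $B_r(z)\subset\Omega$ with $r<R_k$. The first inequality in (\ref{MVPi2}) gives
\[
0 = s(z) \le \frac{1}{c_k(r)}\int_{B_r(z)} s\,dm \le 0,
\]
so $s=0$ $m$-almost everywhere on $B_r(z)$. Hence the zero set of $s$ is dense in $B_r(z)$, and upper semicontinuity then forces $s(y)\ge 0$, and so $s(y)=0$, for every $y\in B_r(z)$. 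Thus $\{s=0\}$ is open.

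For part (b), assume for contradiction that $s$ has a negative local maximum $M=s(z)<0$ on some small ball $B\subset\Omega$. Setting $v=s-M$, we have $v\le 0$ on $B$, $v(z)=0$, and, in the sense of distributions on $B$,
\[
(\Delta+k^2)v = (\Delta+k^2)s - k^2M \ge -k^2M > 0,
\]
so $v$ is $k$-metasubharmonic on $B$. Part (a) then forces $v\equiv 0$ on $B$, so $(\Delta+k^2)v=0$ distributionally there, contradicting $(\Delta+k^2)v\ge -k^2M>0$.

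For part (c), let $c=u(x_0)\ge 0$. Using the $C^2$ boundary and the local-minimum hypothesis, pick an interior ball $B=B_R(y_0)\subset\Omega$ tangent to $\partial\Omega$ at $x_0$ and small enough that $u\ge c$ on $\overline{B}$. Set $v=u-c$; then $v\ge 0$ on $\overline{B}$, $v(x_0)=0$, and $\partial v/\partial n(x_0)=0$. Since $c\ge 0$ and $v\ge 0$,
\[
(\Delta+k^2)v \le -k^2c \le 0 \quad\text{and hence}\quad \Delta v \le -k^2 v - k^2 c \le 0,
\]
so $v$ is both $k$-metasuperharmonic and classically superharmonic on $B$. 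Applying part (a) to $-v$ on the connected domain $B$, either $v\equiv 0$ on $B$ or $v>0$ on $B$. The latter is ruled out by the classical Hopf boundary point lemma, which would give $\partial v/\partial n(x_0)<0$. Hence $v\equiv 0$ on $B$, so $u\equiv c$ there, forcing $(\Delta+k^2)u=k^2c$ distributionally on $B$; together with $(\Delta+k^2)u\le 0$ this yields $c=0$, whence $u\equiv 0$ on $B$. Finally, part (a) applied to the $k$-metasubharmonic function $-u\le 0$ on the domain $\Omega$ (with $-u=0$ at any point of $B$) gives $u\equiv 0\equiv u(x_0)$ on $\Omega$.

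The main obstacle is part (c): the operator $\Delta+k^2$ does not annihilate constants, so there is no direct reduction to the classical Hopf lemma. The key observation is that the sign constraint $u\ge 0$ upgrades $k$-metasuperharmonicity on $B$ to classical superharmonicity (since $\Delta u\le -k^2u\le 0$), bringing the usual Hopf lemma into play; the exceptional case $v\equiv 0$ is then eliminated automatically, because local constancy at a positive value is incompatible with $(\Delta+k^2)u\le 0$, and part (a) propagates $u=0$ from a single ball to all of $\Omega$.
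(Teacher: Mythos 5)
Your proof is correct and takes essentially the same route as the paper: part (b) is identical, and part (c) hinges on the same key observation that $u\ge 0$ together with $(\Delta+k^{2})u\le 0$ makes $u$ classically superharmonic, so the standard Hopf lemma applies. In part (a) you re-derive, via the sub-mean-value inequality of Proposition \ref{MVP} and a connectedness argument, the strong maximum principle that the paper simply invokes after noting that a nonpositive $k$-metasubharmonic function is subharmonic in the usual sense.
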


\begin{proof}
(a) This follows from the fact that $s$ is subharmonic in the usual sense.

(b) Suppose that $s$ has a negative local maximum at $z$. If we define $%
v=s-s(z)$, then $v$ will be $k$-metasubharmonic and nonpositive on a small
ball $B_{\delta }(z)$. Since $v(z)=0$, it follows from part (a) that $s=s(z)$
on $B_{\delta }(z)$, which leads to the contradictory conclusion that $%
(\Delta +k^{2})s=k^{2}s(z)<0$ there.

(c) Since $u$ is superharmonic on $\Omega $, the standard Hopf lemma applies.
\end{proof}

\bigskip

The first (Dirichlet) eigenvalue of the Laplacian on a bounded domain $%
\Omega $ is usually defined by minimizing the Rayleigh quotient; that is, 
\begin{equation*}
\lambda _{1}^{\ast }(\Omega )=\mathrm{min}\left\{ \frac{\int_{\Omega
}|\nabla h|^{2}\,dm}{\int_{\Omega }h^{2}\,dm}:h\in W_{0}^{1,2}(\Omega
)\backslash \{0\}\right\} .
\end{equation*}%
It follows from the Poincar\'{e}{} inequality (see (7.44) in \cite{GT}) that $\lambda _{1}^{\ast }(\Omega )>0$, and the existence of minimizers in the
class $W_{0}^{1,2}(\Omega )$ can be proved by the direct method from the
calculus of variations (see Section 8.12 in \cite{GT}). Without the
assumption that $\Omega $ is regular for the Dirichlet problem the boundary
value $0$ is only attained in a weak sense, which means that this value is
attained on $\partial \Omega $ apart, possibly, from a polar set. The
corresponding eigenspace consists of the minimizers of the Rayleigh quotient
and $0$, and is spanned by a single positive eigenfunction $h_{\Omega }$
(see Theorem 8.38 of \cite{GT}; the connectedness of $\Omega $ is crucial
here). We will now verify that, when $\Omega $ is connected, $\lambda
_{1}^{\ast }(\Omega )$ coincides with the quantity $\lambda _{1}(\Omega )$
defined in (\ref{lambda1}).

\begin{proposition}
\label{equiv}If $\Omega $ is a bounded domain, then $\lambda _{1}(\Omega
)=\lambda _{1}^{\ast }(\Omega )=\lambda _{1}^{\ast \ast }(\Omega )$, where 
\begin{equation*}
\lambda _{1}^{\ast \ast }(\Omega )=\mathrm{min}\left\{ \lambda :\text{there
is a nonnegative }\sqrt{\lambda }\text{-metasubharmonic function }s\in
W_{0}^{1,2}(\Omega )\backslash \{0\}\right\} .
\end{equation*}
\end{proposition}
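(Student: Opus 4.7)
The plan is to establish the chain $\lambda _{1}^{\ast \ast }(\Omega )\leq \lambda _{1}^{\ast }(\Omega )\leq \lambda _{1}(\Omega )\leq \lambda _{1}^{\ast }(\Omega )\leq \lambda _{1}^{\ast \ast }(\Omega )$ as four separate inequalities, leaning on the positive first Dirichlet eigenfunction $h_{\Omega }\in W_{0}^{1,2}(\Omega )\setminus \{0\}$ (which satisfies $\Delta h_{\Omega }+\lambda _{1}^{\ast }(\Omega )h_{\Omega }=0$) and a Picone/Barta-type integral identity. Two of the four inequalities are immediate from $h_\Omega$ being $\sqrt{\lambda _{1}^{\ast }(\Omega )}$-metaharmonic: it is admissible in the definition of $\lambda _{1}^{\ast \ast }(\Omega )$ (as it is nonnegative, lies in $W_{0}^{1,2}(\Omega )\setminus \{0\}$, and is metasubharmonic), giving $\lambda _{1}^{\ast \ast }(\Omega )\leq \lambda _{1}^{\ast }(\Omega )$; and being strictly positive and metasuperharmonic on $\Omega$, it is admissible for $\lambda _{1}(\Omega )$, giving $\lambda _{1}^{\ast }(\Omega )\leq \lambda _{1}(\Omega )$.

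For $\lambda _{1}^{\ast }(\Omega )\leq \lambda _{1}^{\ast \ast }(\Omega )$, I would take any admissible nonnegative $s\in W_{0}^{1,2}(\Omega )\setminus \{0\}$ with $-\Delta s\leq \lambda s$ distributionally, approximate $s$ in $W^{1,2}$-norm by nonnegative $C_{c}^{\infty }(\Omega )$ functions, and pass to the limit to test the distributional inequality against $s$ itself. This yields $\int_{\Omega }|\nabla s|^{2}\,dm\leq \lambda \int_{\Omega }s^{2}\,dm$, and the Rayleigh characterisation then produces $\lambda _{1}^{\ast }(\Omega )\leq \lambda $.

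The main obstacle is the remaining bound $\lambda _{1}(\Omega )\leq \lambda _{1}^{\ast }(\Omega )$, a version of Barta's inequality. Given a positive $\sqrt{\lambda }$-metasuperharmonic function $u$ on $\Omega $ and any $\phi \in C_{c}^{\infty }(\Omega )$, I would test the inequality $-(\Delta +\lambda )u\geq 0$ against the nonnegative function $\phi ^{2}/u$, integrate by parts, and apply the elementary pointwise bound $2(\phi /u)\nabla \phi \cdot \nabla u\leq |\nabla \phi |^{2}+(\phi ^{2}/u^{2})|\nabla u|^{2}$ to conclude $\lambda \int_{\Omega }\phi ^{2}\,dm\leq \int_{\Omega }|\nabla \phi |^{2}\,dm$; density of $C_{c}^{\infty }(\Omega )$ in $W_{0}^{1,2}(\Omega )$ then gives $\lambda \leq \lambda _{1}^{\ast }(\Omega )$.

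The difficulty in this last step is that $u$ is a priori only lower semicontinuous, so the Picone computation is not directly valid. I would resolve this by mollification: fix $\phi \in C_{c}^{\infty }(\Omega )$, choose $\varepsilon >0$ so small that $K:=\mathrm{supp}(\phi )+\overline{B}_{\varepsilon }\subset \Omega $, and set $u_{\varepsilon }=u\ast \rho _{\varepsilon }$. Then $u_{\varepsilon }$ is smooth on a neighbourhood of $\mathrm{supp}(\phi )$, remains $\sqrt{\lambda }$-metasuperharmonic there (since convolution preserves the distributional inequality, noting that $-(\Delta +\lambda )u$ is a nonnegative Radon measure by Schwartz's representation theorem), and is strictly positive because the lower semicontinuous function $u$ attains a positive minimum on the compact set $K$. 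The smooth Barta calculation then applies to $u_{\varepsilon }$ and $\phi $, and letting $\varepsilon \to 0$ is not even needed since the resulting Rayleigh-type inequality is already uniform in $\varepsilon $.
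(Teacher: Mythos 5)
Your proposal is correct, and for the easy inequalities (admissibility of $h_{\Omega}$ in both the $\lambda_{1}$- and $\lambda_{1}^{\ast \ast}$-problems, and testing $-\Delta s\leq \lambda s$ against $s$ itself via nonnegative smooth approximations) it matches the paper. Where you genuinely diverge is in the key bound $\lambda_{1}(\Omega)\leq \lambda_{1}^{\ast}(\Omega)$. You use the Barta/Picone route: mollify the positive $\sqrt{\lambda}$-metasuperharmonic function $u$ (noting that $-(\Delta+\lambda)u$ is a nonnegative measure, that convolution preserves the sign condition, and that lower semicontinuity gives a positive lower bound for $u$, hence for $u_{\varepsilon}$, on a compact neighbourhood of $\mathrm{supp}(\phi)$), then test against $\phi^{2}/u_{\varepsilon}$ and apply the arithmetic--geometric inequality to obtain $\lambda\int\phi^{2}\,dm\leq\int|\nabla\phi|^{2}\,dm$ uniformly in $\varepsilon$. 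This is sound; the only technicalities are that $u$ must be locally integrable for the mollification to make sense (it is, since it is a distribution by definition of metasuperharmonicity) and that $\phi^{2}/u_{\varepsilon}$ extends smoothly by zero, both of which are routine. The paper instead argues by contradiction: assuming a positive $\sqrt{\lambda}$-metasuperharmonic $u$ exists with $\lambda>\lambda_{1}^{\ast}(\Omega)$, it forms the truncation $v_{t}=\min\{u,th_{\Omega}\}$, shows via Green potentials and finiteness of Green energy (dominated by that of $th_{\Omega}$) that $v_{t}\in W_{0}^{1,2}(\Omega)$, and then exhibits $s_{t}=(th_{\Omega}-u)^{+}$ as a competitor violating the minimality of the Rayleigh quotient. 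Your argument is the more standard PDE proof and is arguably more direct (no contradiction, no Green-energy characterisation of superharmonic $W_{0}^{1,2}$ functions); the paper's version is tailored to its stated aim of working entirely within classical potential theory and reuses machinery (Lemma \ref{maxsub}, Green potentials) already present elsewhere in the paper. Either proof is acceptable.
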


\begin{proof}
Since $h_{\Omega }$ is positive and $\lambda _{1}^{\ast }(\Omega )$%
-metaharmonic on $\Omega $, it is clear that $\lambda _{1}(\Omega )\geq
\lambda _{1}^{\ast }(\Omega )\geq \lambda _{1}^{\ast \ast }(\Omega )$.
Further, if $s\in W_{0}^{1,2}(\Omega )\backslash \{0\}$ is nonnegative and $%
\sqrt{\lambda }-$metasubharmonic, then%
\begin{equation*}
\lambda _{1}^{\ast }(\Omega )\leq \frac{\int_{\Omega }|\nabla s|^{2}\,dm}{%
\int_{\Omega }s^{2}\,dm}=\frac{\int_{\Omega }s(-\Delta s)\,dm}{\int_{\Omega
}s^{2}\,dm}\leq \lambda ,
\end{equation*}%
so $\lambda _{1}^{\ast \ast }(\Omega )\geq \lambda _{1}^{\ast }(\Omega )$.
It remains to check that $\lambda _{1}^{\ast }(\Omega )\geq \lambda
_{1}(\Omega )$.

We first note that, if $v$ is a positive superharmonic function on $\Omega $%
, then $v\in W_{0}^{1,2}(\Omega )$ if and only if $v$ is the Green potential 
$G_{\Omega }\mu _{v}$ of a measure $\mu _{v}$ with Green energy 
\begin{equation*}
\int_{\Omega }\left\vert \nabla v\right\vert ^{2}\,dm=\int G_{\Omega }\mu
_{v}d\mu _{v}<\infty .
\end{equation*}%
Now suppose that $u$ is a positive $\sqrt{\lambda }$-metasuperharmonic
function on $\Omega $ for some $\lambda >\lambda _{1}^{\ast }(\Omega )$ and
let $v_{t}=\mathrm{min}\{u,th_{\Omega }\}$ $(t>0)$. Then $v_{t}$ is $\sqrt{%
\lambda _{1}^{\ast }(\Omega )}$-metasuperharmonic, by Lemma \ref{maxsub},
and so is also superharmonic. Since $0\leq v_{t}\leq th_{\Omega }$ we know
that $v_{t}$ is the Green potential on $\Omega $ of a measure $\mu _{v_{t}}$%
, and 
\begin{equation*}
\int G_{\Omega }\mu _{v_{t}}d\mu _{v_{t}}\leq t\int G_{\Omega }\mu
_{h_{\Omega }}d\mu _{v_{t}}=t\int G_{\Omega }\mu _{v_{t}}d\mu _{h_{\Omega
}}\leq t^{2}\int G_{\Omega }\mu _{h_{\Omega }}d\mu _{h_{\Omega }}<\infty ,
\end{equation*}%
so $v_{t}\in W_{0}^{1,2}(\Omega )$. Since 
\begin{equation*}
(\Delta +\lambda )h_{\Omega }=(\Delta +\lambda _{1}^{\ast }(\Omega
))h_{\Omega }+(\lambda -\lambda _{1}^{\ast }(\Omega ))h_{\Omega }>0,
\end{equation*}%
we may choose $t$ large enough so that 
\begin{equation*}
\int_{\Omega }s_{t}\left( (\Delta +\lambda _{1}^{\ast }(\Omega
))s_{t}\right) \,dm>0\text{, \ where \ }s_{t}=th_{\Omega }-v_{t}=(th_{\Omega
}-u)^{+}.
\end{equation*}%
This leads to the contradictory conclusion that 
\begin{equation*}
\frac{\int_{\Omega }\left\vert \nabla s_{t}\right\vert ^{2}\,dm}{%
\int_{\Omega }s_{t}^{2}\,dm}=\frac{\int_{\Omega }s_{t}(-\Delta s_{t})\,dm}{%
\int_{\Omega }s_{t}^{2}\,dm}<\lambda _{1}^{\ast }(\Omega ).
\end{equation*}
\end{proof}

The definition of $\lambda _{1}(\Omega )$ in (\ref{lambda1}) makes sense
even if $\Omega $ is not connected, and this will be important in many
formulations below. The following lemma explains this situation.

\begin{lemma}
\label{evnoncon1} If $\Omega $ is a bounded open set, then 
\begin{equation*}
\lambda _{1}(\Omega )=\min \{\lambda _{1}(D):D\text{ is a component of }%
\Omega \}.
\end{equation*}%
Further, there is a finite number of components $D_{1},D_{2},\ldots ,D_{j}$
of $\Omega $ for which $\lambda _{1}(D_{i})=\lambda _{1}(\Omega )$ ($1\leq
i\leq j$).
\end{lemma}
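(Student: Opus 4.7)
The plan is to prove the lemma by a two-way inequality, with the key intermediate step being to show that only finitely many components can compete for the minimum. Throughout, for each component $D$ of $\Omega$ I write $h_{D}$ for the positive first eigenfunction supplied by Proposition \ref{equiv}, so that $h_{D}>0$ on $D$ and $-(\Delta+\lambda_{1}(D))h_{D}=0$.

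For the easy inequality $\lambda_{1}(\Omega)\leq\lambda_{1}(D)$ for every component $D$, I would take any positive $\sqrt{\lambda}$-metasuperharmonic $u$ on $\Omega$; its restriction $u|_{D}$ is positive and $\sqrt{\lambda}$-metasuperharmonic on the connected set $D$, so Proposition \ref{equiv} forces $\lambda\leq\lambda_{1}(D)$. Taking the supremum of admissible $\lambda$ yields $\lambda_{1}(\Omega)\leq\inf_{D}\lambda_{1}(D)$.

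The main work, and the step I view as the real obstacle, is to verify that this infimum is attained on a finite collection of components. The Faber--Krahn inequality (equivalently, testing the Rayleigh quotient against $h_{D}$ and invoking the Sobolev embedding on $W_{0}^{1,2}(D)$) produces a dimensional lower bound of the form $|D|\geq c\,\lambda_{1}(D)^{-N/2}$. Since the components are pairwise disjoint and $\sum_{D}|D|\leq|\Omega|<\infty$, for each fixed $M>0$ only finitely many components can satisfy $\lambda_{1}(D)\leq M$. Taking $M$ strictly above the infimum shows that the infimum reduces to a minimum over a finite set; call this value $\lambda_{0}$ and let $D_{1},\dots,D_{j}$ be the components that attain it.

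It remains to verify $\lambda_{1}(\Omega)\geq\lambda_{0}$, which I would do by patching. Define $u$ on $\Omega$ by $u=h_{D}$ on each component $D$. Each component is clopen in $\Omega$ (its complement being the union of the other components), so any test function $\phi\in C_{c}^{\infty}(\Omega)$ meets only finitely many components and splits as a finite sum $\phi=\sum_{D}\phi\mathbf{1}_{D}$ with each $\phi\mathbf{1}_{D}\in C_{c}^{\infty}(D)$. Thus in the distributional sense
\begin{equation*}
\langle -(\Delta+\lambda_{0})u,\phi\rangle=\sum_{D}(\lambda_{1}(D)-\lambda_{0})\int_{D}h_{D}\,\phi\,dm\geq 0
\end{equation*}
whenever $\phi\geq 0$, since $\lambda_{1}(D)\geq\lambda_{0}$ and $h_{D}>0$. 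Because $u$ is continuous and positive on $\Omega$, it is an admissible competitor for $\lambda_{1}(\Omega)$, giving $\lambda_{1}(\Omega)\geq\lambda_{0}$. Combining the two inequalities produces $\lambda_{1}(\Omega)=\min_{D}\lambda_{1}(D)$, attained precisely on the finite family $D_{1},\dots,D_{j}$.
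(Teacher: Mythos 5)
Your proof is correct and follows essentially the same route as the paper: the trivial direction by restriction to a component, the reverse direction by patching positive metasuperharmonic (here, eigen-) functions across the components, and the Faber--Krahn inequality combined with $\sum_D|D|\leq|\Omega|<\infty$ to reduce the infimum to a minimum over finitely many components. Your explicit verification that a test function meets only finitely many components and that the patched function is metasuperharmonic in the distributional sense is a detail the paper leaves implicit, but the argument is the same.
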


\begin{proof}
Suppose that $\lambda \leq \lambda _{1}(D)$ for each component $D$ of $%
\Omega $. Then, for each choice of $D$, there is by definition, a positive $%
\sqrt{\lambda }$-metasuperharmonic function $u_{D}$ on $D$. If we define $%
u=u_{D}$ on each component $D$ we get a positive $\sqrt{\lambda }$%
-metasuperharmonic function on $\Omega $, so $\lambda \leq \lambda
_{1}(\Omega )$ by definition. This shows that 
\begin{equation*}
\lambda _{1}(\Omega )\geq \inf \{\lambda _{1}(D):D\text{ is a component of }%
\Omega \}.
\end{equation*}%
The opposite inequality is trivial. That this infimum really is a minimum,
and that there are only finitely many of the components $D_{i}$, can be seen
from the fact that $\lambda _{1}(D)=\lambda _{1}^{\ast }(D)$ for each
component $D$ and the Faber-Krahn inequality.
\end{proof}

\bigskip

We now provide a potential theoretic proof of the $k$-maximum principle
mentioned in Section \ref{Intro}.

\begin{proposition}
\label{MP}Let $\Omega $ be a bounded domain and $h$ be a positive
eigenfunction of $-\Delta $ corresponding to $\lambda _{1}(\Omega )$. The
following assertions are equivalent:\newline
(a) the $k$-maximum principle holds on $\Omega $;\newline
(b) $k^{2}<\lambda _{1}(\Omega )$;\newline
(c) there is a positive $k$-metasuperharmonic function on $\Omega $ which is
not a multiple of $h;$\newline
(d) there is a $k$-metaharmonic function $v$ on $\Omega $ such that $v\geq 1$%
.
\end{proposition}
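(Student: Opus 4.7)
My plan is to establish the cyclic implications (a) $\Rightarrow$ (b) $\Rightarrow$ (d) $\Rightarrow$ (c) $\Rightarrow$ (b), together with (b) $\Rightarrow$ (a). Of these, (a) $\Rightarrow$ (b) and (d) $\Rightarrow$ (c) are essentially immediate. For (a) $\Rightarrow$ (b), suppose $k^{2}\ge\lambda_{1}(\Omega)$: then the positive first eigenfunction $h$ satisfies $(\Delta+k^{2})h=(k^{2}-\lambda_{1})h\ge0$ and lies in $W_{0}^{1,2}(\Omega)$, so $\limsup_{x\to z}h(x)=0$ for quasi-every $z\in\partial\Omega$, yet $h>0$ in $\Omega$, which directly contradicts the $k$-maximum principle. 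For (d) $\Rightarrow$ (c), the $k$-metaharmonic function $v\ge1$ is a fortiori a positive $k$-metasuperharmonic function, and cannot be a multiple of $h$ since multiples of $h$ vanish quasi-everywhere on $\partial\Omega$.

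For (b) $\Rightarrow$ (d) I would use a Neumann series. Let $G$ denote the Green operator for $-\Delta$ on $\Omega$ with zero Dirichlet data; it is a positive, compact, self-adjoint operator on $L^{2}(\Omega)$ of operator norm $1/\lambda_{1}(\Omega)$. Since $k^{2}/\lambda_{1}(\Omega)<1$, the series
\[
v=\sum_{n=0}^{\infty}(k^{2}G)^{n}1=1+k^{2}G1+k^{4}G^{2}1+\cdots
\]
converges in $L^{2}$, and hence, by interior elliptic regularity, uniformly on compacta. Each iterate $G^{n}1$ is nonnegative, so $v\ge1$; and the identity $\Delta Gf=-f$ makes $(\Delta+k^{2})v$ telescope to $0$, so $v$ is $k$-metaharmonic.

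For (b) $\Rightarrow$ (a) I would use the $v$ just constructed as a denominator. Given an upper bounded $k$-metasubharmonic function $s$ on $\Omega$ with $\limsup_{x\to z}s(x)\le0$ for quasi-every $z\in\partial\Omega$, set $w=s/v$. Since $v$ is smooth, bounded below by $1$, and satisfies $(\Delta+k^{2})v=0$, a short direct computation gives
\[
(\Delta+k^{2})s=v\Delta w+2\nabla v\cdot\nabla w,
\]
so $w$ is a distributional subsolution of the elliptic operator $\Delta+(2/v)\nabla v\cdot\nabla$, which has \emph{no zeroth-order term}. Because $w$ is upper semicontinuous, upper bounded, and has $\limsup\le0$ quasi-everywhere on $\partial\Omega$, the classical maximum principle for such operators gives $w\le0$, whence $s\le0$.

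The main obstacle is (c) $\Rightarrow$ (b). Given a positive $k$-metasuperharmonic $u$ on $\Omega$ that is not a multiple of $h$, the case $k^{2}>\lambda_{1}(\Omega)$ is ruled out immediately, because definition (\ref{lambda1}) forbids any positive $k$-metasuperharmonic function beyond the maximum. In the delicate borderline case $k^{2}=\lambda_{1}(\Omega)$ I would show any such $u$ is forced to be a multiple of $h$. With $\mu=-(\Delta+k^{2})u\ge0$ and $(\Delta+k^{2})h=0$, Green's identity formally yields
\[
\int_{\Omega}h\,d\mu=\int_{\partial\Omega}u\,\partial_{n}h\,d\sigma.
\]
The left side is nonnegative and, by the Hopf lemma (Lemma \ref{basic}(c)), the right side is nonpositive, forcing both to vanish. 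The vanishing of the left side gives $\mu=0$ on $\Omega$ (since $h>0$ there), so $u$ is $k$-metaharmonic; the vanishing of the right side gives $u=0$ on the regular part of $\partial\Omega$, making $u$ a positive first Dirichlet eigenfunction and hence a multiple of $h$ by simplicity of $\lambda_{1}$. Justifying this integration by parts rigorously --- where $u$ is only lower semicontinuous and $\partial\Omega$ is merely the boundary of an arbitrary bounded domain --- is the chief technical point; I would handle it by mollifying $u$, exhausting $\Omega$ by smoothly bounded subdomains, applying Green's identity there, and passing to the limit.
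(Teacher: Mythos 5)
Your implications (a)$\Rightarrow$(b) and (d)$\Rightarrow$(c) are fine, and your Neumann-series construction for (b)$\Rightarrow$(d) is a correct and attractive alternative to the paper's route (which instead builds $v$ as a Dirichlet solution on the cylinder $\Omega\times\mathbb{R}$ using Haliste's harmonic measure estimates); it only needs the observation that the partial sums are $k$-metasubharmonic, so their $L^{2}$-bound upgrades to local boundedness via the sub-mean-value inequality of Proposition \ref{MVP}. The two remaining steps, however, have problems.

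The serious gap is (c)$\Rightarrow$(b). Your argument in the borderline case $k^{2}=\lambda_{1}(\Omega)$ rests on the identity $\int_{\Omega}h\,d\mu=\int_{\partial\Omega}u\,\partial_{n}h\,d\sigma$ and on the sign of $\partial_{n}h$ on $\partial\Omega$. Here $\Omega$ is an arbitrary bounded domain: $\partial\Omega$ carries no normal, no useful surface measure, $h$ attains its boundary values only quasi-everywhere and $u$ is merely lower semicontinuous, possibly unbounded near $\partial\Omega$, with no boundary trace at all. The repair you propose --- exhaust by smooth subdomains $\Omega_{j}$ and pass to the limit --- does not close the gap: on $\partial\Omega_{j}$ one gets $\int_{\partial\Omega_{j}}(h\,\partial_{n}u-u\,\partial_{n}h)\,d\sigma$, in which $h>0$, $\partial_{n}h$ has no definite sign, and neither term is known to converge; the sign information you need lives only on the true boundary. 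Even granting $\mu=0$, concluding that a positive $k$-metaharmonic $u$ is a multiple of $h$ requires knowing $u\in W_{0}^{1,2}(\Omega)$, which is exactly what the missing boundary integral was supposed to supply. The paper avoids all of this with an interior argument: set $m_{1}=\inf_{\Omega}u/h$, so $v=u-m_{1}h$ is a positive $k$-metasuperharmonic function; pick a compact $K$ with $\lambda_{1}(\Omega\setminus K)>\lambda_{1}(\Omega)$ and $m_{2}=\inf_{K}v/h>0$, and apply the already-proved implication (b)$\Rightarrow$(a) \emph{on $\Omega\setminus K$} to $m_{2}h-v$ to get $v\geq m_{2}h$ throughout $\Omega$, contradicting the definition of $m_{1}$. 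You would need to replace your Green's identity argument by something of this kind.

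There is also a gap in (b)$\Rightarrow$(a). After the ground-state transform $w=s/v$ you invoke ``the classical maximum principle'' for $\Delta+(2/v)\nabla v\cdot\nabla$ with boundary data prescribed only quasi-everywhere. The drift $2\nabla v/v$ need not be bounded near $\partial\Omega$, and more importantly the maximum principle that ignores polar boundary sets is a potential-theoretic statement, not the classical one for continuous-up-to-the-boundary subsolutions. The step is salvageable --- rewrite the inequality in divergence form as $\mathrm{div}(v^{2}\nabla w)\geq0$ with $1\leq v^{2}\leq M^{2}$ (using that $v\in L^{\infty}$, which itself needs a bootstrapping argument from your $L^{2}$ construction) and appeal to the Littman--Stampacchia--Weinberger theory identifying the polar sets of such operators with classical polar sets --- but as written it is an appeal to a theorem that does not exist in the form you cite. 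The paper instead lifts $s$ to the subharmonic function $(x,t)\mapsto s(x)e^{kt}$ on $\Omega\times\mathbb{R}$ and uses only classical harmonic measure estimates, which sidesteps the issue entirely.
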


\begin{proof}
(a)$\Longrightarrow $(b) Since $(\Delta +k^{2})h=(k^{2}-\lambda _{1}(\Omega
))h$, the function $h$ would be a counterexample to the $k$-maximum
principle if $k^{2}\geq \lambda _{1}(\Omega )$.

(b)$\Longrightarrow $(a) Suppose that $k<l$, where $l=\sqrt{\lambda
_{1}(\Omega )}$, and let $s$ be an upper bounded $k$-metasubharmonic
function on $\Omega $ satisfying $\limsup s(x)\leq 0$ on $\partial \Omega $
apart from a polar set. Let $H[W,E]$ denote the harmonic measure of a set $%
E\subset \partial W$ relative to an open set $W$. A special case of Theorem
6.2 in Haliste \cite{Hal} tells us that, if $z\in \Omega $, then 
\begin{equation}
H[\Omega \times (-\tau ,t),\Omega \times \{t\}](z,0)\leq C(\Omega ,z)e^{-lt}%
\text{ \ \ \ }(t\geq 2,\tau \geq 1).  \label{Hest}
\end{equation}%
Since the function $(x,t)\mapsto s(x)e^{kt}$ is subharmonic on $\Omega
\times \mathbb{R}$ and there is a positive upper bound $c$ for $s$ on $%
\Omega $, it follows that%
\begin{equation*}
s(z)\leq ce^{kt}H[\Omega \times (-\tau ,t),\Omega \times
\{t\}](z,0)+ce^{-k\tau }H[\Omega \times (-\tau ,t),\Omega \times \{-\tau
\}](z,0).
\end{equation*}%
We can thus use (\ref{Hest}) and let $t\rightarrow \infty $ to see that 
\begin{equation*}
s(z)\leq ce^{-k\tau }H[\Omega \times (-\tau ,\infty ),\Omega \times \{-\tau
\}](z,0),
\end{equation*}%
and then let $\tau \rightarrow \infty $ to see that $s(z)\leq 0$. Since $z$
was an arbitrary point of $\Omega $, it follows that $s\leq 0$ on $\Omega $.

(c)$\Longrightarrow $(b) By definition $\lambda _{1}(\Omega )\geq k^{2}$.
Now suppose that $\lambda _{1}(\Omega )=k^{2}$, let $u$ be a positive $k$%
-metasuperharmonic function on $\Omega $ which is not a multiple of $h$ and
define $m_{1}=\inf_{\Omega }u/h$. The function $v:=u-m_{1}h$ is then $k$%
-metasuperharmonic on $\Omega $, and $v>0$ by assumption. Let $K$ be a
compact set in $\Omega $ such that $\lambda _{1}(\Omega \backslash
K)>\lambda _{1}(\Omega )$, and let $m_{2}=\inf_{K}v/h$, whence $m_{2}>0$. We
can apply the $k$-maximum principle on $\Omega \backslash K$ to the function 
$m_{2}h-v$ to see that $v\geq m_{2}h$ on $\Omega $. This leads to the
contradictory conclusion that $u\geq (m_{1}+m_{2})h$. Hence $\lambda
_{1}(\Omega )>k^{2}$.

(b)$\Longrightarrow $(d) Suppose that $\lambda _{1}(\Omega )>k^{2}$ and
define $f:\mathbb{R}^{N+1}\rightarrow \mathbb{R}$ by $%
f(x,x_{N+1})=e^{kx_{N+1}}$. It follows from (\ref{Hest}) that 
\begin{equation*}
H[\Omega \times (-\tau ,\infty ),\partial \Omega \times (t,\infty
)](z,0)\leq C(\Omega ,z)e^{-lt}\text{ \ \ \ }(t\geq 2,\tau \geq 1),
\end{equation*}%
so $f$ is integrable with respect to harmonic measure for $\Omega \times
(-\tau ,\infty )$. The Dirichlet solution $H_{f}^{\Omega \times (-\tau
,\infty )}$ is bounded at $(z,0)$ by $C(\Omega ,z)$, so we can let $\tau
\rightarrow \infty $ to see that $H_{f}^{\Omega \times \mathbb{R}}$ exists.
We denote this function by $h_{1}$. Then $h_{1}$ majorizes the subharmonic
function $f$ on $\Omega \times \mathbb{R}$ and clearly $%
h_{1}(x,x_{N+1})=e^{kx_{N+1}}h_{1}(x,0)$. If we now define $v(x)=h_{1}(x,0)$%
, then $v$ is $k$-metaharmonic function on $\Omega $ and $v\geq 1$.

(d)$\Longrightarrow $(c) This is obvious.
\end{proof}

\bigskip

We define, for $\delta \in (0,3R_{k})$, 
\begin{equation}
u^{\delta }=u\ast \phi _{\delta }\text{, \ where }\phi _{\delta }=\left(
c_{k}(\delta /3)\right) ^{-3}\left( \chi _{B_{\delta /3}}\ast \chi
_{B_{\delta /3}}\right) \ast \chi _{B_{\delta /3}}.  \label{udelta}
\end{equation}%
If $u$ is defined on an open set $\Omega $, then $u^{\delta }$ is defined
and $C^{2}$ on the set $\Omega ^{\delta }=\{x\in \Omega :\mathrm{dist}%
(x,\Omega ^{c})>\delta \}$ (see, for example, Theorem 1.14 in \cite{Hel}).
Further, 
\begin{equation*}
-(\Delta +k^{2})u^{\delta }=-(\Delta +k^{2})(u\ast \phi _{\delta
})=(-(\Delta +k^{2})u)\ast \phi _{\delta }\text{ \ on \ }\Omega ^{\delta },
\end{equation*}%
and 
\begin{equation*}
\left( U_{k}^{\mu }\right) ^{\delta }=(\Psi _{k}\ast \mu )\ast \phi _{\delta
}=\Psi _{k}\ast (\mu \ast \phi _{\delta })=U_{k}^{\mu ^{\delta }}.
\end{equation*}%
Now suppose that $u$ is $k$-metasuperharmonic on $\Omega $. It follows that $%
u^{\delta }$ is $k$-metasuperharmonic on $\Omega ^{\delta }$. Further, if $%
\delta \in (0,3R_{k})$, then Proposition \ref{MVP} shows that $u^{\delta
}\leq u$ on $\Omega ^{\delta }$, and that $u^{\varepsilon }\geq u^{\delta }$
on $\Omega ^{\delta }$ if $0<\varepsilon <\delta $. We have thus established
the following result.

\begin{theorem}[Mollification]
\label{molli} If $0<\delta <3R_{k}$ and $u$ is $k$-metasuperharmonic on $%
\Omega $, then $u^{\delta }$ is $C^{2}$ and $k$-metasuperharmonic on $\Omega
^{\delta }$. Further, $u^{\delta }(x)\leq u(x)$ on $\Omega ^{\delta }$ and $%
u^{\delta }\nearrow u$ as $\delta \searrow 0$.
\end{theorem}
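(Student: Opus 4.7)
The plan is to route all four claims through Proposition \ref{MVP}, exploiting the specific factorization $\phi_\delta = c_k(\delta/3)^{-3} \chi_{B_{\delta/3}}^{\ast 3} = \psi \ast \psi \ast \psi$, where $\psi := c_k(\delta/3)^{-1}\chi_{B_{\delta/3}}$: a single convolution with $\psi$ is precisely the normalized ball average $w \mapsto c_k(\delta/3)^{-1}\int_{B_{\delta/3}(\,\cdot\,)} w\,dm$ of that proposition. The $C^2$-regularity of $u^\delta$ on $\Omega^\delta$ follows from standard smoothing by convolution (Theorem 1.14 in \cite{Hel}); the preservation of $k$-metasuperharmonicity is immediate from the non-negativity of $\phi_\delta$ together with the distributional identity $-(\Delta+k^2)(u\ast\phi_\delta) = (-(\Delta+k^2)u)\ast\phi_\delta$ recorded just above the statement. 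In particular every intermediate iterate $u_0 := u$, $u_{j+1} := u_j \ast \psi$ is itself $k$-metasuperharmonic on $\Omega^{j\delta/3}$.

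For the bound $u^\delta \leq u$, I would apply Proposition \ref{MVP} to $-u_j$ (which is $k$-metasubharmonic) on balls of radius $\delta/3 < R_k$: this yields $u_j(z) \geq c_k(\delta/3)^{-1}\int_{B_{\delta/3}(z)} u_j\,dm = u_{j+1}(z)$, and three iterations give $u \geq u_1 \geq u_2 \geq u_3 = u^\delta$ on $\Omega^\delta$.

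For the monotonicity $u^\varepsilon \geq u^\delta$ when $0 < \varepsilon < \delta$, I would parameterize by $U_j^\tau := u \ast \psi_{\tau/3}^{\ast j}$ (with $\psi_r := c_k(r)^{-1}\chi_{B_r}$) and prove $U_j^\varepsilon \geq U_j^\delta$ by induction on $j \in \{0,1,2,3\}$. The inductive step reads
\[
U_{j+1}^\varepsilon(z) = \frac{1}{c_k(\varepsilon/3)}\int_{B_{\varepsilon/3}(z)} U_j^\varepsilon \,dm \geq \frac{1}{c_k(\varepsilon/3)}\int_{B_{\varepsilon/3}(z)} U_j^\delta \,dm \geq \frac{1}{c_k(\delta/3)}\int_{B_{\delta/3}(z)} U_j^\delta \,dm = U_{j+1}^\delta(z),
\]
where the first inequality uses the inductive hypothesis and the second is the radial-monotonicity clause of Proposition \ref{MVP}, applied to the $k$-metasuperharmonic function $U_j^\delta$ on a ball of radius strictly larger than $\delta/3$ centred at $z$ (which exists because $z \in \Omega^\delta$). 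I expect this step — keeping track of the fact that both the averaged function and the averaging radius depend on the parameter — to be the main bookkeeping hurdle.

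For the pointwise convergence $u^\delta \nearrow u$ as $\delta \searrow 0$, the monotonicity above produces a limit $\tilde u \leq u$, and the opposite inequality is extracted from the lower semicontinuity of $u$. For $z \in \Omega$ with $u(z) < \infty$ and $\epsilon > 0$, choose $\eta$ with $u > u(z) - \epsilon$ on $B_\eta(z)$; then for $\delta < \eta$,
\[
u^\delta(z) \geq (u(z)-\epsilon)\int \phi_\delta \,dm = (u(z)-\epsilon)\,c_k(\delta/3)^{-3}\,m(B_{\delta/3})^3.
\]
The subtle point here is that $\phi_\delta$ is \emph{not} a probability density, so the standard mollifier-limit argument has to be supplemented by checking that this constant tends to $1$. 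That follows from the small-$t$ expansion of Bessel functions (Lemma \ref{Bessel}), which gives $c_k(r)/m(B_r) \to 1$ as $r \to 0+$; hence $\liminf_{\delta \to 0+} u^\delta(z) \geq u(z) - \epsilon$, and since $\epsilon$ is arbitrary, combining with $u^\delta \leq u$ yields $u^\delta(z) \to u(z)$. The case $u(z) = +\infty$ is handled identically with a large constant $M$ in place of $u(z) - \epsilon$.
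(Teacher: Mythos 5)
Your proposal is correct and follows essentially the same route as the paper, which establishes the theorem in the paragraph preceding its statement by exactly this mechanism: convolution preserves $(\Delta+k^2)u\le 0$ since $\phi_\delta\ge 0$, and the inequalities $u^\delta\le u$ and $u^\varepsilon\ge u^\delta$ are read off from the averaging and radial-monotonicity clauses of Proposition \ref{MVP} applied through the factorization of $\phi_\delta$ as a triple convolution of normalized ball indicators. You have merely filled in details the paper leaves implicit (the three-step iteration, the induction for monotonicity, and the lower-semicontinuity argument with the check that $c_k(r)/m(B_r)\to 1$), all of which are sound.
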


Let $f\in C(\partial \Omega )$, where $\Omega $ is a bounded open set and $%
\lambda _{1}(\Omega )>k^{2}$. It follows from Proposition \ref{MP} that we
may integrate the function $(x,x_{N+1})\longmapsto e^{kx_{N+1}}f(x)$ with
respect to harmonic measure for $\Omega \times \mathbb{R}$ to obtain a
harmonic function $H$ there. Since $H(x,x_{N+1})=e^{kx_{N+1}}H(x,0)$, the
function defined by $H_{k,f}^{\Omega }(x)=H(x,0)$ solves the Dirichlet
problem for $\Delta +k^{2}$ on $\Omega $ with boundary data $f$. (The set of
irregular boundary points of $\Omega \times \mathbb{R}$ is of the form $%
E\times \mathbb{R}$, where $E$ is the set of irregular boundary points of $%
\Omega $, by Lemma 2 in \cite{G96}.) This definition of $H_{k,f}^{\Omega }$
makes sense for any bounded Borel measurable function $f$ on $\partial
\Omega $. If $\mu $ is a finite measure on $\Omega $, then the mapping $%
f\mapsto \int H_{k,f}^{\Omega }d\mu $ defines a positive linear functional
on $C(\partial \Omega )$, and so is of the form $f\mapsto \int fd\mu
^{\Omega ^{c}}$ for a unique finite positive Radon measure $\mu ^{\Omega
^{c}}$ on $\partial \Omega $. (This measure depends on $k$.) However, since $%
H_{k,1}^{\Omega }>1$ in $\Omega $, we see that 
\begin{equation*}
\mu ^{\Omega ^{c}}(\partial \Omega )=\int H_{k,1}^{\Omega }d\mu >\mu (\Omega
),
\end{equation*}%
so the map $\mu \mapsto \mu ^{\Omega ^{c}}$ is not measure preserving. We
denote the $k$\textit{-harmonic measure} $\delta _{z}^{\Omega ^{c}}$ by $\nu
_{k,z}^{\Omega }$ and note that $U_{k}^{\delta _{z}}-U_{k}^{\nu
_{k,z}^{\Omega }}\geq 0$ with equality on $\Omega ^{c}$ apart possibly from
a polar subset of $\partial \Omega $.

\begin{theorem}[Poisson Modification]
\label{PosMod} Suppose that $u$ is $k$-metasuperharmonic on $\Omega $ and $%
\overline{B_{r}(z)}\subset \Omega $, where $r<R_{k}$. If we define 
\begin{equation*}
v=\left\{ 
\begin{array}{l}
h\text{ in }B_{r}(z) \\ 
u\text{ in }\Omega \setminus B_{r}(z)%
\end{array}%
\right. ,
\end{equation*}%
where $h$ solves the Dirichlet problem for $\Delta +k^{2}$ on $B_{r}(z)$
with boundary data $u$, then $v\leq u$ and $v$ is $k$-metasuperharmonic in $%
\Omega $.
\end{theorem}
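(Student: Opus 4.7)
The proof falls into two parts: first that $v\le u$ on $\Omega$, and second that $v$ is $k$-metasuperharmonic on $\Omega$. Since $r<R_{k}$, we have $\lambda_{1}(B_{r}(z))=(j_{\alpha,1}/r)^{2}>k^{2}$, so by Proposition \ref{MP} the $k$-maximum principle is available on $B_{r}(z)$; this is the central tool.

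For the inequality $v\le u$, observe that $h-u$ is $k$-metasubharmonic on $B_{r}(z)$, and is upper bounded there because $u$ is bounded below on $\overline{B_{r}(z)}$ by lower semicontinuity. At each regular boundary point $x_{0}\in\partial B_{r}(z)$ one has $\lim_{y\to x_{0}}h(y)=u(x_{0})$ (from the construction of the Dirichlet solution, approximating $u|_{\partial B_{r}(z)}$ from below by bounded data via Theorem \ref{molli} if necessary), while $\liminf_{y\to x_{0}}u(y)\ge u(x_{0})$ by lsc. Hence $\limsup_{y\to x_{0}}(h-u)(y)\le 0$, and the $k$-maximum principle yields $h\le u$ on $B_{r}(z)$, so $v\le u$ on $\Omega$.

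For the $k$-metasuperharmonicity of $v$, the lower semicontinuity is automatic in the interiors of $B_{r}(z)$ and of $\Omega\setminus\overline{B_{r}(z)}$; at $x_{0}\in\partial B_{r}(z)$, splitting a sequence $y_{n}\to x_{0}$ into its parts inside and outside $B_{r}(z)$ and using continuity of $h$ at regular boundary points together with lsc of $u$ gives $\liminf v(y_{n})\ge u(x_{0})=v(x_{0})$. I would then invoke the standard characterization that an lsc locally integrable function $v$ is $k$-metasuperharmonic iff it satisfies $v(x)\ge c_{k}(\rho)^{-1}\int_{B_{\rho}(x)}v\,dm$ for every $x$ and all sufficiently small $\rho>0$ (by a Taylor expansion the ratio equals $v(x)+\tfrac{\rho^{2}}{2(N+2)}(\Delta+k^{2})v(x)+O(\rho^{4})$ in the smooth case, and the general case follows by mollification, cf.\ Proposition \ref{MVP}). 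The inequality is immediate inside $B_{r}(z)$ (where $v$ is $k$-metaharmonic) and inside $\Omega\setminus\overline{B_{r}(z)}$ (where $v=u$). For $x\in\partial B_{r}(z)$, choose $\rho$ so small that $B_{\rho}(x)\subset\Omega$ and $\rho<R_{k}$, and combine $v\le u$ with the supermean inequality for the $k$-metasuperharmonic function $u$:
\[
\frac{1}{c_{k}(\rho)}\int_{B_{\rho}(x)}v\,dm\;\le\;\frac{1}{c_{k}(\rho)}\int_{B_{\rho}(x)}u\,dm\;\le\;u(x)\;=\;v(x).
\]

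The main obstacle is the rigorous setup of the Dirichlet solution $h$ when $u|_{\partial B_{r}(z)}$ is merely lsc and possibly unbounded, and the justification that $\lim_{y\to x_{0}}h(y)=u(x_{0})$ at regular boundary points. A clean way around this is to mollify first: use Theorem \ref{molli} to produce smooth $k$-metasuperharmonic $u^{\delta}\nearrow u$, carry out the argument above for the smooth function $u^{\delta}$ (where integration by parts and a Hopf-style inequality $\partial_{n}u^{\delta}\le\partial_{n}h^{\delta}$ on $\partial B_{r}(z)$---coming from $u^{\delta}-h^{\delta}\ge 0$ attaining its minimum $0$ on $\partial B_{r}(z)$---give a transparent proof that $-(\Delta+k^{2})v^{\delta}\ge 0$ distributionally), and then pass to the limit $v^{\delta}\nearrow v$, using closure of $k$-metasuperharmonicity under monotone upward limits (with lsc regularisation if needed).
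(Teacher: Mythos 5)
Your proposal is correct in substance but proceeds quite differently from the paper, whose entire proof reads: ``This follows from Kato's inequality for the Laplacian, since the $k$-maximum principle holds on $B_{r}(z)$.'' You agree with the paper on the first half: both arguments obtain $h\leq u$ (hence $v\leq u$) from the $k$-maximum principle on $B_{r}(z)$, which is available because $r<R_{k}$, and your device of approximating the lsc boundary data from below by continuous data $f_{n}\nearrow u$ and applying the maximum principle to each $H_{k,f_{n}}-u$ is the right way to handle the unboundedness issue you flag. For the second half the paper applies Kato's inequality (the same tool as in Lemma \ref{maxsub}, viewing $v$ near $\partial B_{r}(z)$ as a minimum of $k$-metasuperharmonic functions), whereas you verify the super-mean-value inequality $v(x)\geq c_{k}(\rho)^{-1}\int_{B_{\rho}(x)}v\,dm$ directly and invoke the converse of Proposition \ref{MVP}. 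This is a legitimate and arguably more elementary route, and your chain of inequalities at points of $\Omega\setminus B_{r}(z)$ is exactly right.

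One caveat on that route: at points $x\in B_{r}(z)$ close to $\partial B_{r}(z)$ the admissible radii for your verification shrink to zero (you need $B_{\rho}(x)\subset B_{r}(z)$ to use metaharmonicity of $h$, and the comparison $v\leq u$, $u(x)=v(x)$ is unavailable since there $v(x)=h(x)\leq u(x)$). Consequently the quick ``Fubini plus Taylor expansion'' justification of the characterization, which requires a radius $\rho_{0}$ uniform over compact subsets, does not apply as stated; you would need the Blaschke--Privalov-type version with point-dependent radii for $\Delta+k^{2}$, which is true but requires its own maximum-principle argument. Your fallback in the final paragraph --- mollify $u$, compare normal derivatives of $u^{\delta}$ and $h^{\delta}$ on $\partial B_{r}(z)$ to get the sign of the surface jump in $-(\Delta+k^{2})v^{\delta}$, and pass to the increasing limit --- is complete and closes this gap; it is also essentially a hands-on version of the Kato's-inequality argument the paper intends. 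So the proof goes through, but the mollification route should be regarded as the actual proof rather than the backup.
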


\begin{proof}
This follows from Kato's inequality for the Laplacian, since the $k$-maximum
principle holds on $B_{r}(z)$.
\end{proof}

\begin{theorem}
\label{submin} Suppose that $u$ is $k$-metasuperharmonic in $\Omega $ and $%
\mathcal{G\neq \emptyset }$, where 
\begin{equation*}
\mathcal{G}=\{v:v\text{ is }k\text{-metasubharmonic in }\Omega \text{ and }%
v\leq u\text{ in }\Omega \}.
\end{equation*}%
Then $\mathcal{G}$ contains a largest element, which is $k$-metaharmonic on $%
\Omega $.
\end{theorem}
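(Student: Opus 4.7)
The plan is a Perron-type argument: the candidate for the largest element is the pointwise supremum $w(x) = \sup_{v \in \mathcal{G}} v(x)$, and the task is to show that $w$ is $k$-metaharmonic on $\Omega$, from which $w \in \mathcal{G}$ (as $k$-metaharmonic functions are $k$-metasubharmonic) and the maximality is immediate.

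First I would verify the two closure properties needed: $\mathcal{G}$ is closed under pointwise maxima (Lemma \ref{maxsub}) and under Poisson modification on any ball $\overline{B_r(z)} \subset \Omega$ with $r < R_k$. For the latter, given $v \in \mathcal{G}$, define $\tilde v$ by solving the Dirichlet problem for $\Delta + k^2$ on $B_r(z)$ with boundary data $v$ and keeping $\tilde v = v$ outside. The dual of Theorem \ref{PosMod} (same Kato-type argument) gives $\tilde v \geq v$ and $\tilde v$ is $k$-metasubharmonic on $\Omega$. To check $\tilde v \leq u$, note that $u - \tilde v$ is $k$-metasuperharmonic on $B_r(z)$ with nonnegative boundary values, so the $k$-maximum principle on $B_r(z)$ (valid by Proposition \ref{MP} since $r < R_k$ gives $\lambda_1(B_r(z)) > k^2$) yields $\tilde v \leq u$ there, and hence $\tilde v \in \mathcal{G}$.

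Next I would fix $x_0 \in \Omega$ and a ball $\overline{B_r(x_0)} \subset \Omega$ with $r < R_k$, choose a countable dense set $\{y_j\}$ in $B_r(x_0)$, and for each $j$ pick $v_{j,n} \in \mathcal{G}$ with $v_{j,n}(y_j) \nearrow w(y_j)$. Set $V_n = \max\{v_{j,n} : 1 \leq j \leq n\} \in \mathcal{G}$ and apply Poisson modification on $B_r(x_0)$ iteratively (taking running maxima between steps) to produce an increasing sequence $W_n \in \mathcal{G}$ that is $k$-metaharmonic on $B_r(x_0)$ and satisfies $W_n(y_j) \to w(y_j)$ for every $j$. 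The monotone limit $h = \lim W_n$, bounded above by $u$, is $k$-metaharmonic on $B_r(x_0)$: the positive mean-value identities of Proposition \ref{MVP} (which use $c_k(r) > 0$ for $r < R_k$) together with monotone convergence give the mean-value property for $h$, and the mollification framework of Theorem \ref{molli} then shows $(\Delta + k^2)h = 0$ in the distributional sense, hence classically.

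Finally I would identify $h$ with $w$ on $B_r(x_0)$ by the standard Perron swap. If $w(x^*) > h(x^*)$ for some $x^* \in B_r(x_0)$, pick $v^* \in \mathcal{G}$ with $v^*(x^*) > h(x^*)$ and repeat the construction with the enlarged dense set $\{y_j\} \cup \{x^*\}$ and sequences augmented by $v^*$, producing $h' \geq h$ that is $k$-metaharmonic on $B_r(x_0)$ with $h'(x^*) \geq v^*(x^*) > h(x^*)$; but $h'(y_j) \leq w(y_j) = h(y_j)$ forces $h' = h$ on a dense set, hence on all of $B_r(x_0)$ by continuity, a contradiction. Thus $w = h$ is $k$-metaharmonic on every admissible ball, and since every point of $\Omega$ admits such a ball, $w$ is $k$-metaharmonic on $\Omega$. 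The main obstacle is the Harnack-type step: ensuring that an increasing sequence of $k$-metaharmonic functions locally bounded above by $u$ converges to a $k$-metaharmonic function. Because $c_k$ changes sign for $r > R_k$, inequalities are only preserved by the mean-value operator on balls of radius less than $R_k$, which is precisely why the entire construction must be performed locally on small balls rather than globally.
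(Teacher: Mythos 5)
Your proposal is correct and follows essentially the same route as the paper, whose proof is a one-line appeal to exactly the three ingredients you develop: closure under maxima (Lemma \ref{maxsub}), Poisson modification on small balls where the $k$-maximum principle holds, and the convergence of increasing sequences of $k$-metaharmonic functions that are locally bounded above. You have simply filled in the details of that Perron-type construction, including the key observation that everything must be localized to balls of radius less than $R_{k}$ because $c_{k}$ changes sign beyond that scale.
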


\begin{proof}
This follows from Lemma \ref{maxsub}, Poisson modification, and the fact
that an increasing sequence of $k$-metaharmonic functions that is locally
bounded above has a $k$-metaharmonic limit.
\end{proof}

\section{Partial Balayage\label{PB}}

Suppose that $\rho \in L^{\infty }({\mathbb{R}}^{N})$ and $c^{-1}\leq \rho
\leq c$ for some $c>0$. Given a (positive) measure $\mu $ with compact
support in ${\mathbb{R}}^{N}$ we define 
\begin{equation*}
\mathcal{F}_{k}^{\rho }(\mu )=\left\{ v\in \mathcal{D}^{\prime }({\mathbb{R}}%
^{N}):-(\Delta +k^{2})v\leq \rho \text{, \ }v\leq U_{k}^{\mu }\text{ and }%
\{v<U_{k}^{\mu }\}\text{ is bounded}\right\} .
\end{equation*}%
(Thus the set $\mathcal{F}_{k}(\mu )$ in Section \ref{Intro} corresponds to
the special case where $\rho =1$.)

Now suppose that $\mathcal{F}_{k}^{\rho }(\mu )\neq \emptyset $. If $u,v\in 
\mathcal{F}_{k}^{\rho }(\mu )$, then it follows from the proof of Lemma \ref%
{maxsub} that $\mathrm{max}\{u,v\}\in \mathcal{F}_{k}^{\rho }(\mu )$.
Standard potential theoretic arguments (cf. Section 3.7 of \cite{AG}) now
show that $\mathcal{F}_{k}^{\rho }(\mu )$ has a largest element, which has
an upper semicontinuous representative. We denote this function by $%
V_{k,\rho }^{\mu }$.

\begin{remark}
In \cite{KLSS} the authors treated the case where $\rho =1$ and $\mu \in
L^{\infty }$, where $\mu $ had variable sign and was supported in a small
ball. If we define $w=v+U_{k}^{\mu ^{-}}$, then 
\begin{equation*}
v\leq U_{k}^{\mu ^{+}-\mu ^{-}}\Leftrightarrow w\leq U_{k}^{\mu ^{+}}\text{
\ and \ }-(\Delta +k^{2})v\leq 1\Leftrightarrow -(\Delta +k^{2})w\leq 1+\mu
^{-}.
\end{equation*}%
Thus we can deal with this case by considering $V_{k,1+\mu ^{-}}^{\mu
^{+}}-U_{k}^{\mu ^{-}}$.
\end{remark}

If $\mathcal{F}_{k}^{\rho }(\mu )\neq \emptyset $, then we define%
\begin{align*}
\omega _{k}^{\rho }(\mu )& =\{V_{k,\rho }^{\mu }<U_{k}^{\mu }\}, \\
\mathcal{B}_{k}^{\rho }(\mu )& =-(\Delta +k^{2})V_{k,\rho }^{\mu }, \\
\Omega _{k}^{\rho }(\mu )& =\underset{\{B_{r}(x):(\rho -\mathcal{B}%
_{k}^{\rho }(\mu ))(B_{r}(x))=0\}}{\cup }B_{r}(x).
\end{align*}%
Thus $\mathcal{B}_{k}^{\rho }(\mu )\leq \rho $, and $\Omega _{k}^{\rho }(\mu
)$ is the largest open set where $\mathcal{B}_{k}^{\rho }(\mu )=\rho $. The
sets $\omega _{k}^{\rho }(\mu )$ and $\Omega _{k}^{\rho }(\mu )$ are bounded
by assumption, and we will see below that $\omega _{k}^{\rho }(\mu )\subset
\Omega _{k}^{\rho }(\mu )$. The measure $\mathcal{B}_{k}^{\rho }(\mu )$ is
called the $k$\textit{-partial balayage of }$\mu $\textit{\ onto }$\rho $.
We note that 
\begin{equation*}
V_{k,\rho }^{\mu }=U_{k}^{\mathcal{B}_{k}^{\rho }(\mu )},
\end{equation*}%
since $V_{k,\rho }^{\mu }=U_{k}^{\mu }$ outside a compact set.

\begin{theorem}[Structure Theorem]
\label{strthm} If $\mathcal{F}_{k}^{\rho }(\mu )\neq \emptyset $, then 
\begin{equation}
\mathcal{B}_{k}^{\rho }(\mu )=\rho |_{\omega _{k}^{\rho }(\mu )}+\mu
|_{\omega _{k}^{\rho }(\mu )^{c}}=\rho |_{\Omega _{k}^{\rho }(\mu )}+\mu
|_{\Omega _{k}^{\rho }(\mu )^{c}}\leq \rho  \label{struct}
\end{equation}%
and $\omega _{k}^{\rho }(\mu )\subset \Omega _{k}^{\rho }(\mu )$. Further,
if $\mu (\omega _{k}^{\rho }(\mu )^{c})=0$, then $m(\Omega _{k}^{\rho }(\mu
)\backslash \omega _{k}^{\rho }(\mu ))=0$.
\end{theorem}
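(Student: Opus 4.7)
My plan is to view $V := V_{k,\rho}^\mu$ as the solution to an obstacle problem with obstacle $U_k^\mu$ and to analyse $\mathcal{B} := -(\Delta+k^2)V$ separately on $\omega := \omega_k^\rho(\mu)$ and its complement. As a preliminary, $\rho m - \mathcal{B}$ is a nonnegative distribution (by the definition of $\mathcal{F}_k^\rho(\mu)$) and hence a Radon measure, so $\mathcal{B}$ is a signed Radon measure with $\mathcal{B} \leq \rho m$; this already supplies the final inequality in the theorem.

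For the obstacle-problem identity $\mathcal{B}|_\omega = \rho m|_\omega$, I would argue by contradiction with the maximality of $V$. Fix a ball $B = B_r(x_0)$ with $\overline B \subset \omega$ and $r < R_k$. Since $U_k^\mu - V$ is l.s.c.\ and strictly positive on the compact set $\overline B$, it has a positive minimum there, giving room to enlarge $V$ on $B$. I would replace $V$ on $B$ by $\tilde V$, the solution of the local obstacle problem on $B$ (obstacle $U_k^\mu$, source $\rho$, boundary data $V|_{\partial B}$), and extend by $V$ off $B$. Since $V$ itself is a competitor in that local problem, $\tilde V \geq V$ on $B$ with equality on $\partial B$; the outward normal-derivative jump of $\tilde V - V$ across $\partial B$ is therefore nonpositive on the $B$-side, so the surface contribution to $-(\Delta+k^2)\tilde V$ is nonpositive and one verifies $\tilde V \in \mathcal{F}_k^\rho(\mu)$. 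By maximality $\tilde V \leq V$, forcing $\tilde V = V$ on $B$, which in turn forces $-(\Delta+k^2)V = \rho$ on $B$ (since $V < U_k^\mu$ throughout $B$ makes $B$ entirely the non-coincidence region of the local problem). As such balls cover $\omega$, this yields $\mathcal{B}|_\omega = \rho m|_\omega$.

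For the coincidence identity $\mathcal{B}|_{\omega^c} = \mu|_{\omega^c}$, set $f := U_k^\mu - V \geq 0$. Then $f = 0$ pointwise on $\omega^c$ and $-(\Delta+k^2)f = \mu - \mathcal{B}$ as distributions. For any $\phi \in C_c^\infty(\mathbb{R}^N)$ with $\mathrm{supp}(\phi) \subset \omega^c$, the distribution $-(\Delta+k^2)\phi$ also has support in $\omega^c$, where $f$ vanishes; hence $\langle \mu - \mathcal{B}, \phi \rangle = \int f(-(\Delta+k^2)\phi)\,dm = 0$. This shows $\mathcal{B} - \mu$ vanishes on $\mathrm{int}(\omega^c)$. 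To upgrade this to all of $\omega^c$, I would use that $\mathcal{B}$, being $\rho m$ on $\omega$ by Step 2 and agreeing with the positive measure $\mu$ on $\mathrm{int}(\omega^c)$, is a positive measure; combined with $\mathcal{B} \leq \rho m$ it is absolutely continuous, which rules out stray mass of $\mathcal{B} - \mu$ on the negligible set $\partial \omega$.

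The second decomposition is then a formal consequence: Step 2 yields $\omega \subset \Omega := \Omega_k^\rho(\mu)$ (every ball $B_r(x) \subset \omega$ has $(\rho m - \mathcal{B})(B_r(x)) = 0$), $\mathcal{B} = \rho m$ on $\Omega$ holds by definition of $\Omega$, and $\mathcal{B} = \mu$ on $\Omega^c \subset \omega^c$ by Step 3, so $\mu = \rho m$ on $\Omega \setminus \omega$ and the two representations of $\mathcal{B}$ coincide. Finally, if $\mu(\omega^c) = 0$ then $\mu(\Omega \setminus \omega) = 0$, hence $(\rho m)(\Omega \setminus \omega) = 0$, and $\rho \geq c^{-1} > 0$ forces $m(\Omega \setminus \omega) = 0$. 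The main technical difficulty is the local obstacle-problem construction in Step 2, where $\tilde V$ must simultaneously satisfy the obstacle, source, and boundary constraints while producing a strict improvement over $V$ unless $\mathcal{B}$ already equals $\rho m$ locally; a secondary subtlety is the handling of $\partial \omega$ in Step 3.
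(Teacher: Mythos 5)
Your Steps 1, 2 and 4 match the paper's argument (the paper also disposes of $\mathcal{B}_k^\rho(\mu)=\rho$ on $\omega_k^\rho(\mu)$ by a Poisson-modification/maximality argument in small balls, and derives the $\Omega_k^\rho(\mu)$ statements formally), and your test-function computation correctly identifies $\mathcal{B}_k^\rho(\mu)-\mu$ as zero on the \emph{interior} of $\omega_k^\rho(\mu)^c$. The genuine gap is your treatment of $\partial\omega_k^\rho(\mu)$, which is precisely where the theorem is hard. First, $\partial\omega_k^\rho(\mu)$ is the boundary of an arbitrary open set of the form $\{V<U_k^\mu\}$ and there is no a priori reason for it to be Lebesgue-negligible. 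Second, your claim that $\mathcal{B}:=\mathcal{B}_k^\rho(\mu)$ is a positive measure is not established: knowing $\mathcal{B}=\rho m$ on $\omega$ and $\mathcal{B}=\mu$ on $\mathrm{int}(\omega^c)$ says nothing about the sign of $\mathcal{B}|_{\partial\omega}$. Third, even granting both points, you would only conclude $\mathcal{B}(\partial\omega)=0$, whereas the identity to be proved is $\mathcal{B}|_{\partial\omega}=\mu|_{\partial\omega}$, and $\mu$ may well charge $\partial\omega$ (the theorem only forces $\mu|_{\omega^c}\le\rho m$ \emph{a posteriori}).

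The paper closes this gap with two nontrivial inputs applied to $v:=U_k^\mu-V_{k,\rho}^\mu\ge 0$, which vanishes on $\omega^c$. Kato's inequality gives $(-\Delta v)|_{\omega^c}\le 0$, and since $v=0$ on $\omega^c$ this equals $(\mu-\mathcal{B})|_{\omega^c}$; hence $0\le\mu|_{\omega^c}\le\mathcal{B}|_{\omega^c}\le\rho m$, so $(\mathcal{B}-\mu)|_{\omega^c}$ is absolutely continuous with respect to $m$. On the other hand, Bourgain's theorem that harmonic measure is singular with respect to Lebesgue measure yields $(-\Delta v)|_{\omega^c}\perp m$. A measure that is both absolutely continuous and singular with respect to $m$ vanishes, which gives $\mathcal{B}|_{\omega^c}=\mu|_{\omega^c}$ including on $\partial\omega$. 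Without some substitute for these two ingredients (the sign information from Kato and the singularity of the Riesz measure on the coincidence set), your argument does not reach the stated conclusion.
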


\begin{proof}
A Poisson modification argument, applied in small balls where the maximum
principle holds, shows that $\mathcal{B}_{k}^{\rho }(\mu )=\rho $ in $\omega
_{k}^{\rho }(\mu )$. Let $v=U_{k}^{\mu }-V_{k,\rho }^{\mu }$. Then\ $v\geq 0$%
, with equality in $\omega _{k}^{\rho }(\mu )^{c}$. Since harmonic measure
for $\Delta $ is singular with respect to Lebesgue measure \cite{Bou} (see
also \cite{GaSj}),\ we know that $-\Delta v|_{\omega _{k}^{\rho }(\mu
)^{c}}\perp m$. By Kato's inequality, 
\begin{equation*}
0\geq \left( -\Delta v\right) |_{\omega _{k}^{\rho }(\mu )^{c}}=\left(
-(\Delta +k^{2})v\right) |_{\omega _{k}^{\rho }(\mu )^{c}}=\mu |_{\omega
_{k}^{\rho }(\mu )^{c}}-\mathcal{B}_{k}^{\rho }(\mu )|_{\omega _{k}^{\rho
}(\mu )^{c}},
\end{equation*}%
so $\mu |_{\omega _{k}^{\rho }(\mu )^{c}}\leq \mathcal{B}_{k}^{\rho }(\mu
)\leq \rho $ and hence $\left( -\Delta v\right) |_{\omega _{k}^{\rho }(\mu
)^{c}}=0$. This proves the first equality in (\ref{struct}). The remaining
assertions follow from the definition of $\Omega _{k}^{\rho }(\mu )$.
\end{proof}

\bigskip

We will now investigate the set of measures $\mu $ for which $\mathcal{F}%
_{k}^{\rho }(\mu )\neq \emptyset $. We are grateful to Simon Larson for
pointing out the following result, which simplifies some of our exposition.

\begin{proposition}
\label{mollexist} Suppose that $\mathrm{supp}(\mu )$ is compact and there
exists $r\in (0,R_{k}]$ such that $\mu (B_{r}(x))\leq c_{k}(r)$ for all $%
x\in \mathbb{R}^{N}$. If $\rho \geq 1$, then $\mathcal{F}_{k}^{\rho }(\mu
)\neq \emptyset $ and 
\begin{equation}
\mathrm{supp}(\mathcal{B}_{k}^{\rho }(\mu ))\subset \underset{x\in 
\mathrm{supp}(\mu )}{\cup }\overline{B_{r}(x)}.  \label{suppB}
\end{equation}
\end{proposition}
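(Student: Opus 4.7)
The plan is to construct an explicit element of $\mathcal{F}_{k}^{\rho}(\mu)$ by mollifying $\mu$ with a uniform density supported in $B_{r}$, then read off the support statement from the resulting equality of potentials outside a thickening of $\mathrm{supp}(\mu)$.

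First I would invoke Proposition \ref{pmb0} with $c = c_{k}(r)$ (which is admissible because $r \in (0,R_{k}]$ gives $0 < c_{k}(r) \leq c_{k}(R_{k})$) to obtain $V_{k}^{c_{k}(r)\delta_{0}} = U_{k}^{m|_{B_{r}}}$. Combined with the defining inequality $V_{k}^{c_{k}(r)\delta_{0}} \leq U_{k}^{c_{k}(r)\delta_{0}} = c_{k}(r)\Psi_{k}$, this yields the pointwise bound
\begin{equation*}
\int_{B_{r}} \Psi_{k}(x-y)\,dm(y) \;\leq\; c_{k}(r)\,\Psi_{k}(x) \qquad (x \in \mathbb{R}^{N}).
\end{equation*}
Convolving with $\mu$ and dividing by $c_{k}(r)$ gives $U_{k}^{\tilde{\mu}} \leq U_{k}^{\mu}$, where I set $\tilde{\mu}$ to be the $L^{\infty}$ function
\begin{equation*}
\tilde{\mu}(x) \;=\; \frac{1}{c_{k}(r)}\,(\chi_{B_{r}} \ast \mu)(x) \;=\; \frac{\mu(B_{r}(x))}{c_{k}(r)}.
\end{equation*}

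Next I would verify the three defining properties of $\mathcal{F}_{k}^{\rho}(\mu)$ for $v := U_{k}^{\tilde{\mu}}$. The bound $-(\Delta + k^{2})v = \tilde{\mu} \leq 1 \leq \rho$ follows from the hypothesis $\mu(B_{r}(x)) \leq c_{k}(r)$; the inequality $v \leq U_{k}^{\mu}$ is exactly what was just shown. For the boundedness of $\{v < U_{k}^{\mu}\}$, observe that if $\mathrm{dist}(x, \mathrm{supp}(\mu)) > r$ then $U_{k}^{\mu}$ is $k$-metaharmonic on a neighbourhood of $\overline{B_{r}(x)}$, so Proposition \ref{MVP} gives $c_{k}(r)U_{k}^{\mu}(x) = \int_{B_{r}(x)} U_{k}^{\mu}\,dm$, and Fubini rewrites the right-hand side as $c_{k}(r)U_{k}^{\tilde{\mu}}(x)$. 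Hence $v = U_{k}^{\mu}$ on the unbounded open set $\{\mathrm{dist}(\cdot, \mathrm{supp}(\mu)) > r\}$, so $\{v < U_{k}^{\mu\}}$ is contained in the bounded set $\bigcup_{x \in \mathrm{supp}(\mu)}\overline{B_{r}(x)}$.

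Finally, for the support inclusion (\ref{suppB}), I would use that $V_{k,\rho}^{\mu}$ is the largest element of $\mathcal{F}_{k}^{\rho}(\mu)$, so $U_{k}^{\tilde{\mu}} \leq V_{k,\rho}^{\mu} \leq U_{k}^{\mu}$; together with the equality just established, this forces $V_{k,\rho}^{\mu} = U_{k}^{\mu}$ on the open set $W := \{\mathrm{dist}(\cdot, \mathrm{supp}(\mu)) > r\}$. Applying $-(\Delta + k^{2})$ on $W$, which is disjoint from $\mathrm{supp}(\mu)$, yields $\mathcal{B}_{k}^{\rho}(\mu)|_{W} = \mu|_{W} = 0$, so $\mathrm{supp}(\mathcal{B}_{k}^{\rho}(\mu)) \subset W^{c} = \bigcup_{x \in \mathrm{supp}(\mu)}\overline{B_{r}(x)}$.

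There is no real obstacle; the only mildly delicate point is checking that $c_{k}(r) > 0$ on the closed interval $(0,R_{k}]$, which follows from the monotone bijection statement in Proposition \ref{pmb0}(b), and verifying that the mean-value identity in Proposition \ref{MVP} applies at the critical radius (which is fine because $r < \mathrm{dist}(x,\mathrm{supp}(\mu))$ strictly on the set considered).
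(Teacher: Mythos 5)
Your construction is essentially the paper's: the authors also take $v=U_k^{\mu\ast h_r}$ with $h_r=c_k(r)^{-1}\chi_{B_r}$, verify $-(\Delta+k^2)v=\mu(B_r(\cdot))/c_k(r)\le 1\le\rho$, obtain $v\le U_k^\mu$ with equality off the $r$-neighbourhood of $\mathrm{supp}(\mu)$, and deduce (\ref{suppB}) (they via the Structure Theorem, you by localizing $-(\Delta+k^2)$ on the complement; both are fine). The one genuine defect is your route to the key inequality $\int_{B_r}\Psi_k(x-y)\,dm(y)\le c_k(r)\Psi_k(x)$: you derive it from Proposition \ref{pmb0}, but in the paper the ``if'' direction of Proposition \ref{pmb0}(a) (= Proposition \ref{pmb}) is itself proved \emph{from} Proposition \ref{mollexist}, so as written your argument is circular within the paper's logical architecture. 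The fix is immediate and is what the paper does: since $-\Psi_k(\cdot-z)$ is $k$-metasubharmonic on all of $\mathbb{R}^N$ (indeed $-(\Delta+k^2)\Psi_k=\delta_0\ge 0$), the first inequality in (\ref{MVPi2}) — which Proposition \ref{MVP} explicitly extends to the critical radius $r=R_k$ — gives the inequality directly, with no appeal to $k$-partial balayage of point masses; equivalently, apply that inequality to $-U_k^\mu$ itself and use Fubini. The same remark applies to your justification that $c_k>0$ on $(0,R_k]$, which should come from $c_k(r)=\int_0^r d_k$ and the positivity of $d_k$ on $(0,R_k)$ rather than from Proposition \ref{pmb0}(b). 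With those substitutions the proof is correct and matches the paper's.
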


\begin{proof}
If we look at the mollification 
\begin{equation*}
U_{k}^{\mu }\ast h_{r}=U_{k}^{\mu \ast h_{r}},\text{ \ \ where \ \ }h_{r}=%
\frac{1}{c_{k}(r)}\chi _{B_{r}}\text{,}
\end{equation*}%
then Proposition \ref{MVP} shows that $U_{k}^{\mu \ast h_{r}}\leq U_{k}^{\mu
}$. Hence $U_{k}^{\mu \ast h_{r}}\in \mathcal{F}_{k}^{\rho }(\mu )$ since,
by assumption, 
\begin{equation*}
-(\Delta +k^{2})U_{k}^{\mu \ast h_{r}}(x)=\frac{\mu (B_{r}(x))}{c_{k}(r)}%
\leq 1\leq \rho .
\end{equation*}%
Clearly 
\begin{equation*}
\omega _{k}^{\rho }(\mu )\subset \{U_{k}^{\mu \ast h_{r}}<U_{k}^{\mu
}\}\subset \underset{x\in \mathrm{supp}(\mu )}{\cup }\overline{B_{r}(x)},
\end{equation*}%
so (\ref{suppB})\ follows from the structure formula.
\end{proof}

\begin{lemma}
\label{mon1} If $\mathcal{F}_{k}^{\rho }(\mu )\neq \emptyset $ and $\eta
\leq \mu $, then $\mathcal{F}_{k}^{\rho }(\eta )\neq \emptyset $, and 
\begin{align}
U_{k}^{\mu }-V_{k,\rho }^{\mu }& \geq U_{k}^{\eta }-V_{k,\rho }^{\eta },
\label{mona} \\
\omega _{k}^{\rho }(\eta )& \subset \omega _{k}^{\rho }(\mu ),  \label{monb}
\\
\mathcal{B}_{k}^{\rho }(\eta )& \leq \mathcal{B}_{k}^{\rho }(\mu )\text{,}
\label{monc} \\
\Omega _{k}^{\rho }(\eta )& \subset \Omega _{k}^{\rho }(\mu ).  \label{mond}
\end{align}
\end{lemma}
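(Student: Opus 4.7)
The plan is to prove all five conclusions at once by producing an explicit element of $\mathcal{F}_{k}^{\rho}(\eta)$ from the extremal function $V_{k,\rho}^{\mu}$. Since $\mu-\eta$ is a positive compactly supported measure, its $k$-potential $U_{k}^{\mu-\eta}$ is well defined and satisfies $-(\Delta+k^{2})U_{k}^{\mu-\eta}=\mu-\eta$. I would therefore set $v=V_{k,\rho}^{\mu}-U_{k}^{\mu-\eta}$ and verify the three defining conditions of $\mathcal{F}_{k}^{\rho}(\eta)$: the pointwise inequality $v\leq U_{k}^{\eta}$ follows directly from $V_{k,\rho}^{\mu}\leq U_{k}^{\mu}$; the distributional estimate
\[
-(\Delta+k^{2})v=\mathcal{B}_{k}^{\rho}(\mu)-(\mu-\eta)\leq \rho
\]
uses $\mathcal{B}_{k}^{\rho}(\mu)\leq \rho$ from the structure theorem (Theorem \ref{strthm}) together with $\mu-\eta\geq 0$; and finally $\{v<U_{k}^{\eta}\}=\{V_{k,\rho}^{\mu}<U_{k}^{\mu}\}=\omega_{k}^{\rho}(\mu)$, which is bounded by hypothesis. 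This establishes $\mathcal{F}_{k}^{\rho}(\eta)\neq\emptyset$.

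The maximality of $V_{k,\rho}^{\eta}$ in $\mathcal{F}_{k}^{\rho}(\eta)$ then yields $V_{k,\rho}^{\eta}\geq v=V_{k,\rho}^{\mu}-U_{k}^{\mu-\eta}$, which rearranges to \eqref{mona}. Statement \eqref{monb} is immediate, because \eqref{mona} shows that if $V_{k,\rho}^{\eta}(x)<U_{k}^{\eta}(x)$ then also $V_{k,\rho}^{\mu}(x)<U_{k}^{\mu}(x)$.

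For \eqref{monc} I would apply the structure formula to both $\eta$ and $\mu$ and compare case by case. On $\omega_{k}^{\rho}(\eta)$, which by \eqref{monb} is contained in $\omega_{k}^{\rho}(\mu)$, both balayages equal $\rho$; on $\omega_{k}^{\rho}(\mu)\setminus\omega_{k}^{\rho}(\eta)$ the structure formula gives $\mathcal{B}_{k}^{\rho}(\eta)=\eta|_{\omega_{k}^{\rho}(\eta)^{c}}\leq \rho=\mathcal{B}_{k}^{\rho}(\mu)$; and on $\omega_{k}^{\rho}(\mu)^{c}\subset \omega_{k}^{\rho}(\eta)^{c}$ we have $\mathcal{B}_{k}^{\rho}(\eta)=\eta\leq\mu=\mathcal{B}_{k}^{\rho}(\mu)$. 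Assertion \eqref{mond} then drops out of the definition of $\Omega_{k}^{\rho}$: if $(\rho-\mathcal{B}_{k}^{\rho}(\eta))(B_{r}(x))=0$ then, by \eqref{monc}, also $(\rho-\mathcal{B}_{k}^{\rho}(\mu))(B_{r}(x))=0$, so the ball lies in $\Omega_{k}^{\rho}(\mu)$.

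No serious obstacle is anticipated. The only point requiring a little care is ensuring that the set identity $\{v<U_{k}^{\eta}\}=\omega_{k}^{\rho}(\mu)$ is read using the upper semicontinuous representatives already fixed for the potentials, but this is automatic from the conventions set up before the lemma.
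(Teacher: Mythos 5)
Your proposal is correct and follows essentially the same route as the paper: the key observation that $V_{k,\rho}^{\mu}-U_{k}^{\mu-\eta}\in\mathcal{F}_{k}^{\rho}(\eta)$ (verified via the structure theorem and positivity of $\mu-\eta$), maximality giving (\ref{mona}), and then (\ref{monb})--(\ref{mond}) read off from (\ref{struct}) and the definition of $\Omega_{k}^{\rho}(\cdot)$. Your case-by-case verification of (\ref{monc}) merely spells out what the paper leaves as "clear from (\ref{struct})".
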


\begin{proof}
If $v\in \mathcal{F}_{k}^{\rho }(\mu )$, then $v-U_{k}^{\mu -\eta }\in 
\mathcal{F}_{k}^{\rho }(\eta )$. In particular, $V_{k,\rho }^{\mu
}-U_{k}^{\mu -\eta }\leq V_{k,\rho }^{\eta }$, whence (\ref{mona}) holds.
Further, if $V_{k,\rho }^{\eta }(x)<U_{k}^{\eta }(x)$, then $V_{k,\rho
}^{\mu }(x)<U_{k}^{\mu }(x)$, so (\ref{monb}) holds. Finally, (\ref{monc})
is now clear from (\ref{struct}), and (\ref{mond}) follows from (\ref{monc})
and the definition of $\Omega _{k}^{\rho }(\cdot )$.
\end{proof}

\begin{remark}
\label{umu}If $\mathcal{F}_{k}^{\rho }(\mu )\neq \emptyset $, then the
function 
\begin{equation*}
u_{k,\rho }^{\mu }:=U_{k}^{\mu }-V_{k,\rho }^{\mu }
\end{equation*}%
is the smallest element in the collection 
\begin{equation}
\{v:v\geq 0,-(\Delta +k^{2})v\geq \mu -\rho \},  \label{umuset}
\end{equation}%
and the above lemma says that it decreases when $\mu $ decreases.
\end{remark}

\begin{lemma}
\label{monink} If $0\leq l\leq k$ and $\mathcal{F}_{k}^{\rho }(\mu )\neq
\emptyset $, then $\mathcal{F}_{l}^{\rho }(\mu )\neq \emptyset $ and 
\begin{equation}
u_{k,\rho }^{\mu }\geq u_{l,\rho }^{\mu },\text{\ \ \ }\omega _{k}^{\rho
}(\mu )\supset \omega _{l}^{\rho }(\mu ).  \label{depk}
\end{equation}
\end{lemma}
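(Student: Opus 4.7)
The plan is to exploit the variational characterisation of $u_{k,\rho}^{\mu}$ given in Remark \ref{umu}: it is the smallest member of the class
\begin{equation*}
\{v:v\geq 0,\,-(\Delta +k^{2})v\geq \mu -\rho \}.
\end{equation*}
The key observation is that this class grows when $k$ decreases, because $u_{k,\rho}^{\mu}\geq 0$ and $l\leq k$ together yield
\begin{equation*}
-(\Delta +l^{2})u_{k,\rho}^{\mu}=-(\Delta +k^{2})u_{k,\rho}^{\mu}+(k^{2}-l^{2})u_{k,\rho}^{\mu}\geq \mu -\rho .
\end{equation*}
Thus $u_{k,\rho}^{\mu}$ lies in the corresponding class for $l$.

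Next I would pass from this fact to the nonemptiness of $\mathcal{F}_{l}^{\rho }(\mu )$. Set $v:=U_{l}^{\mu }-u_{k,\rho}^{\mu}$. Since $u_{k,\rho}^{\mu}\geq 0$ we have $v\leq U_{l}^{\mu }$; and using $-(\Delta +l^{2})U_{l}^{\mu }=\mu $ together with the displayed inequality,
\begin{equation*}
-(\Delta +l^{2})v=\mu -\bigl(-(\Delta +l^{2})u_{k,\rho}^{\mu}\bigr)\leq \mu -(\mu -\rho )=\rho .
\end{equation*}
Finally $\{v<U_{l}^{\mu }\}=\{u_{k,\rho}^{\mu}>0\}=\omega _{k}^{\rho }(\mu )$, which is bounded by hypothesis. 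Hence $v\in \mathcal{F}_{l}^{\rho }(\mu )$, so $\mathcal{F}_{l}^{\rho }(\mu )\neq \emptyset $.

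Once $\mathcal{F}_{l}^{\rho }(\mu )\neq \emptyset $, Remark \ref{umu} applies at level $l$: the function $u_{l,\rho}^{\mu}$ is the smallest element of $\{w\geq 0:-(\Delta +l^{2})w\geq \mu -\rho \}$. Since we just showed $u_{k,\rho}^{\mu}$ belongs to this class, minimality gives $u_{l,\rho}^{\mu}\leq u_{k,\rho}^{\mu}$, which is the first inequality in (\ref{depk}). The inclusion $\omega _{l}^{\rho }(\mu )\subset \omega _{k}^{\rho }(\mu )$ is then immediate from the identity $\omega _{k}^{\rho }(\mu )=\{u_{k,\rho}^{\mu}>0\}$ (and likewise for $l$). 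There is really no obstacle beyond keeping the signs straight; the only subtle point is verifying that the sublevel set $\{v<U_{l}^{\mu }\}$ is bounded, which is what makes the membership in $\mathcal{F}_{l}^{\rho }(\mu )$ work and which follows at once from the boundedness of $\omega _{k}^{\rho }(\mu )$.
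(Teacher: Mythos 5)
Your proposal is correct and follows essentially the same route as the paper: both verify that $-(\Delta+l^{2})u_{k,\rho}^{\mu}\geq\mu-\rho$ and then exhibit $U_{l}^{\mu}-u_{k,\rho}^{\mu}$ as an element of $\mathcal{F}_{l}^{\rho}(\mu)$, from which (\ref{depk}) follows by the extremal characterisation in Remark \ref{umu}. You simply spell out the membership checks and the final deduction in more detail than the paper does.
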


\begin{proof}
Since $u_{k,\rho }^{\mu }$ is the smallest element in the collection (\ref%
{umuset}) and 
\begin{equation*}
-(\Delta +l^{2})u_{k,\rho }^{\mu }=-(\Delta +k^{2})u_{k,\rho }^{\mu
}+(k^{2}-l^{2})u_{k,\rho }^{\mu }\geq \mu -\rho ,
\end{equation*}%
we see that $U_{l}^{\mu }-u_{k,\rho }^{\mu }\in \mathcal{F}_{l}^{\rho }(\mu
) $, and (\ref{depk}) follows.
\end{proof}

\begin{lemma}
Let $\mu =\sum\nolimits_{j=1}^{n}\widetilde{c}_{j}\mu _{j}$, where $\mathcal{%
F}_{k}^{\rho }(\mu _{j})\neq \emptyset $ and $\widetilde{c}_{j}\geq 0$ $%
(j=1,...,n)$, and $\sum\nolimits_{j=1}^{n}\widetilde{c}_{j}\leq 1$. Then $%
\mathcal{F}_{k}^{\rho }(\mu )\neq \emptyset $ and 
\begin{equation*}
\omega _{k}^{\rho }(\mu )\subset \bigcup\limits_{j=1}^{n}\omega _{k}^{\rho
}(\mu _{j}),\ \ \ V_{k,\rho }^{\mu }\geq \sum_{j=1}^{n}\widetilde{c}%
_{j}V_{k,\rho }^{\mu _{j}}.
\end{equation*}
\end{lemma}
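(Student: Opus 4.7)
The plan is to construct an explicit test function in $\mathcal{F}_k^\rho(\mu)$ out of the already-known envelopes $V_{k,\rho}^{\mu_j}$. Specifically, I would set
\[
w := \sum_{j=1}^n \widetilde{c}_j V_{k,\rho}^{\mu_j}
\]
and check the three defining conditions of $\mathcal{F}_k^\rho(\mu)$. First, by linearity of the $k$-potential and the pointwise bound $V_{k,\rho}^{\mu_j} \le U_k^{\mu_j}$, we have $w \le \sum_j \widetilde{c}_j U_k^{\mu_j} = U_k^\mu$. Second, using $-(\Delta+k^2)V_{k,\rho}^{\mu_j} = \mathcal{B}_k^\rho(\mu_j) \le \rho$ from Theorem \ref{strthm} together with $\widetilde{c}_j \ge 0$ and $\sum_j \widetilde{c}_j \le 1$,
\[
-(\Delta+k^2)w \;=\; \sum_{j=1}^n \widetilde{c}_j \mathcal{B}_k^\rho(\mu_j) \;\le\; \Bigl(\sum_{j=1}^n \widetilde{c}_j\Bigr)\rho \;\le\; \rho.
\]
Third, if $w(x) < U_k^\mu(x)$, then $\sum_j \widetilde{c}_j V_{k,\rho}^{\mu_j}(x) < \sum_j \widetilde{c}_j U_k^{\mu_j}(x)$, which forces strict inequality $V_{k,\rho}^{\mu_j}(x) < U_k^{\mu_j}(x)$ for some $j$ with $\widetilde{c}_j>0$; hence $\{w < U_k^\mu\} \subset \bigcup_{j=1}^n \omega_k^\rho(\mu_j)$, which is a finite union of bounded sets and therefore bounded. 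This shows $w \in \mathcal{F}_k^\rho(\mu)$, so the family is nonempty and, by maximality, $V_{k,\rho}^\mu \ge w = \sum_j \widetilde{c}_j V_{k,\rho}^{\mu_j}$.

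For the inclusion $\omega_k^\rho(\mu) \subset \bigcup_j \omega_k^\rho(\mu_j)$ I would argue by contraposition: if $x \notin \bigcup_j \omega_k^\rho(\mu_j)$, then $V_{k,\rho}^{\mu_j}(x) = U_k^{\mu_j}(x)$ for every $j$, so $w(x) = \sum_j \widetilde{c}_j U_k^{\mu_j}(x) = U_k^\mu(x)$; combined with $V_{k,\rho}^\mu \ge w$ and $V_{k,\rho}^\mu \le U_k^\mu$ this forces $V_{k,\rho}^\mu(x) = U_k^\mu(x)$, i.e. $x \notin \omega_k^\rho(\mu)$. The argument is essentially an exercise in linearity of $-(\Delta+k^2)$ and of the $k$-potential, so I do not anticipate a genuine obstacle; the only mild subtlety is making sure the boundedness check for $\{w < U_k^\mu\}$ and the pointwise comparison for the set inclusion are carried out using the upper semicontinuous representatives, which is automatic since a nonnegative linear combination of u.s.c. functions is u.s.c.
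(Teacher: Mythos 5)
Your proposal is correct and is exactly the paper's argument: the authors' entire proof is the observation that if $v_j\in\mathcal{F}_k^{\rho}(\mu_j)$ then $\sum_j\widetilde{c}_jv_j\in\mathcal{F}_k^{\rho}(\mu)$, applied with $v_j=V_{k,\rho}^{\mu_j}$. You have simply written out the verification of the three defining conditions and the two resulting consequences, all of which check out.
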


\begin{proof}
This follows from the observation that, if $v_{j}\in \mathcal{F}_{k}^{\rho
}(\mu _{j})$ $(j=1,...,n)$, then $\sum\nolimits_{j=1}^{n}\widetilde{c}%
_{j}v_{j}\in \mathcal{F}_{k}^{\rho }(\mu )$.
\end{proof}

\begin{proposition}
\label{stbal} If $\mathcal{F}_{k}^{\rho }(\mu _{1}+\mu _{2})\neq \emptyset $
and $\rho \leq \rho _{1}$, then 
\begin{equation*}
V_{k,\rho }^{\mu _{1}+\mu _{2}}=V_{k,\rho }^{\mathcal{B}_{k}^{\rho _{1}}(\mu
_{1})+\mu _{2}}\text{ \ or, equivalently, \ }\mathcal{B}_{k}^{\rho }(\mu
_{1}+\mu _{2})=\mathcal{B}_{k}^{\rho }(\mathcal{B}_{k}^{\rho _{1}}(\mu
_{1})+\mu _{2}).
\end{equation*}
\end{proposition}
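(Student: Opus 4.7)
The plan is to prove the identity by establishing two opposing inequalities for the largest elements of the respective families, after first verifying that $\mathcal{B}_k^{\rho_1}(\mu_1)$ is even defined. To get the latter, I would start from any $v\in\mathcal{F}_k^{\rho}(\mu_1+\mu_2)$ and inspect $v-U_k^{\mu_2}$: since $-(\Delta+k^2)U_k^{\mu_2}=\mu_2$ and $\mu_2\geq 0$, one has $-(\Delta+k^2)(v-U_k^{\mu_2})\leq \rho-\mu_2\leq \rho\leq\rho_1$ distributionally; the inequality $v-U_k^{\mu_2}\leq U_k^{\mu_1}$ is immediate; and the coincidence set $\{v-U_k^{\mu_2}<U_k^{\mu_1}\}=\{v<U_k^{\mu_1+\mu_2}\}$ is bounded. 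Hence $v-U_k^{\mu_2}\in\mathcal{F}_k^{\rho_1}(\mu_1)$, so $\mathcal{B}_k^{\rho_1}(\mu_1)$ exists and (because $V_{k,\rho_1}^{\mu_1}=U_k^{\mu_1}$ outside a compact set) one has the key identity
\[
U_k^{\mathcal{B}_k^{\rho_1}(\mu_1)+\mu_2}=V_{k,\rho_1}^{\mu_1}+U_k^{\mu_2}\leq U_k^{\mu_1+\mu_2},
\]
with strict inequality exactly on the bounded set $\omega_k^{\rho_1}(\mu_1)$.

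For the inequality $V_{k,\rho}^{\mathcal{B}_k^{\rho_1}(\mu_1)+\mu_2}\leq V_{k,\rho}^{\mu_1+\mu_2}$, I would show the inclusion $\mathcal{F}_k^{\rho}(\mathcal{B}_k^{\rho_1}(\mu_1)+\mu_2)\subset\mathcal{F}_k^{\rho}(\mu_1+\mu_2)$. If $w$ lies in the former, the distributional inequality $-(\Delta+k^2)w\leq\rho$ is unchanged, the pointwise bound $w\leq U_k^{\mu_1+\mu_2}$ follows from the key identity, and the coincidence set is controlled by
\[
\{w<U_k^{\mu_1+\mu_2}\}\subset\{w<U_k^{\mathcal{B}_k^{\rho_1}(\mu_1)+\mu_2}\}\cup\omega_k^{\rho_1}(\mu_1),
\]
which is a union of two bounded sets. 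Taking the largest such $w$ on each side yields the inequality.

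For the reverse direction, I would set $v=V_{k,\rho}^{\mu_1+\mu_2}$ and re-apply the setup step (which used only $\rho\leq\rho_1$ and $\mu_2\geq 0$) to conclude $v-U_k^{\mu_2}\in\mathcal{F}_k^{\rho_1}(\mu_1)$. Therefore $v-U_k^{\mu_2}\leq V_{k,\rho_1}^{\mu_1}$, equivalently $v\leq U_k^{\mathcal{B}_k^{\rho_1}(\mu_1)+\mu_2}$. It then only remains to check $v\in\mathcal{F}_k^{\rho}(\mathcal{B}_k^{\rho_1}(\mu_1)+\mu_2)$: the distributional bound is inherited, the upper bound was just established, and the coincidence set $\{v<U_k^{\mathcal{B}_k^{\rho_1}(\mu_1)+\mu_2}\}$ sits inside the bounded set $\{v<U_k^{\mu_1+\mu_2}\}$ by the key identity. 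This gives $v\leq V_{k,\rho}^{\mathcal{B}_k^{\rho_1}(\mu_1)+\mu_2}$ and closes the loop.

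No single step is deep; the work lies in the bookkeeping required to pass back and forth between the families $\mathcal{F}_k^{\rho}$ and $\mathcal{F}_k^{\rho_1}$ while preserving all three defining conditions. The main obstacle, such as it is, is verifying boundedness of the various coincidence sets, and the pivotal observation that makes this routine is that the boundedness of $\omega_k^{\rho_1}(\mu_1)$ (built into the definition of $\mathcal{B}_k^{\rho_1}(\mu_1)$ via the structure theorem) is precisely the bridge between coincidence sets defined relative to $U_k^{\mathcal{B}_k^{\rho_1}(\mu_1)+\mu_2}$ and those defined relative to $U_k^{\mu_1+\mu_2}$.
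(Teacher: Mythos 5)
Your proposal is correct and follows essentially the same route as the paper: both directions rest on the observations that $V_{k,\rho}^{\mu_1+\mu_2}-U_k^{\mu_2}\in\mathcal{F}_k^{\rho_1}(\mu_1)$ (hence $V_{k,\rho}^{\mu_1+\mu_2}\in\mathcal{F}_k^{\rho}(\mathcal{B}_k^{\rho_1}(\mu_1)+\mu_2)$) and that $U_k^{\mathcal{B}_k^{\rho_1}(\mu_1)+\mu_2}\le U_k^{\mu_1+\mu_2}$ (hence the reverse membership). You simply spell out the verifications of the three defining conditions, in particular the boundedness of the non-coincidence sets, which the paper leaves implicit.
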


\begin{proof}
Clearly 
\begin{equation*}
V_{k,\rho }^{\mu _{1}+\mu _{2}}-U_{k}^{\mu _{2}}\in \mathcal{F}_{k}^{\rho
_{1}}(\mu _{1}).
\end{equation*}%
Hence 
\begin{equation}
V_{k,\rho }^{\mu _{1}+\mu _{2}}\leq U_{k}^{\mathcal{B}_{k}^{\rho _{1}}(\mu
_{1})+\mu _{2}},\text{ \ and so \ }V_{k,\rho }^{\mu _{1}+\mu _{2}}\in 
\mathcal{F}_{k}^{\rho }(\mathcal{B}_{k}^{\rho _{1}}(\mu _{1})+\mu _{2}).
\label{Vest}
\end{equation}%
The desired equality follows, since%
\begin{equation*}
V_{k,\rho }^{\mathcal{B}_{k}^{\rho _{1}}(\mu _{1})+\mu _{2}}\leq U_{k}^{%
\mathcal{B}_{k}^{\rho _{1}}(\mu _{1})+\mu _{2}}\leq U_{k}^{\mu _{1}+\mu
_{2}},\text{ \ and so \ }V_{k,\rho }^{\mathcal{B}_{k}^{\rho _{1}}(\mu
_{1})+\mu _{2}}\in \mathcal{F}_{k}^{\rho }(\mu _{1}+\mu _{2}).
\end{equation*}
\end{proof}

\begin{lemma}
\label{meset} If $\mathcal{F}_{k}^{\rho }(\mu )\neq \emptyset $ and $D$ is a
component of $\omega _{k}^{\rho }(\mu )$, then $\mu (D)\geq (\mu -\rho
)^{+}(D)>0$.
\end{lemma}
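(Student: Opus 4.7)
The inequality $\mu(D)\ge (\mu-\rho)^+(D)$ is immediate, since $\rho m\ge 0$ gives $\mu-\rho m\le \mu$, whence $(\mu-\rho m)^+\le \mu$. I focus on $(\mu-\rho)^+(D)>0$, arguing by contradiction: suppose $\mu|_D\le \rho m|_D$.

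Set $u=u_{k,\rho}^\mu=U_k^\mu-V_{k,\rho}^\mu$. By Theorem~\ref{strthm}, $\mathcal{B}_k^\rho(\mu)=\rho m$ on $\omega_k^\rho(\mu)\supset D$, so on $D$
\[
-(\Delta+k^2)u=(\mu-\mathcal{B}_k^\rho(\mu))|_D=(\mu-\rho m)|_D\le 0.
\]
Moreover $u>0$ on $D$ (since $D\subset \omega_k^\rho(\mu)$), and because components of an open set are open, $\partial D\subset \omega_k^\rho(\mu)^c$, where $u=0$. I will contradict the maximality of $V_{k,\rho}^\mu$ via the candidate
\[
\tilde V := V_{k,\rho}^\mu + u\chi_D,
\]
equal to $U_k^\mu$ on $D$ and $V_{k,\rho}^\mu$ on $D^c$. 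Clearly $\tilde V\le U_k^\mu$, $\{\tilde V<U_k^\mu\}=\omega_k^\rho(\mu)\setminus D$ is bounded, and $\tilde V>V_{k,\rho}^\mu$ on $D$.

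The key step is to verify $-(\Delta+k^2)\tilde V\le \rho m$ distributionally. A Green's identity computation on $D$, using $u=0$ on $\partial D$, yields
\[
-(\Delta+k^2)(u\chi_D) = (\mu-\rho m)|_D + \frac{\partial u}{\partial n}\sigma|_{\partial D},
\]
where $\partial u/\partial n$ is the outward normal derivative from $D$; since $u\ge 0$ attains its minimum $0$ on $\partial D$, this derivative is $\le 0$, so the surface term is nonpositive. Combined with $(\mu-\rho m)|_D\le 0$, this gives $-(\Delta+k^2)(u\chi_D)\le 0$, and hence
\[
-(\Delta+k^2)\tilde V = \mathcal{B}_k^\rho(\mu) + \bigl(-(\Delta+k^2)(u\chi_D)\bigr)\le \mathcal{B}_k^\rho(\mu)\le \rho m
\]
by Theorem~\ref{strthm}. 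Thus $\tilde V\in\mathcal{F}_k^\rho(\mu)$, contradicting the maximality of $V_{k,\rho}^\mu$.

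The principal obstacle is the rigorous justification of the Green's identity computation, since neither $u$ nor $\partial D$ need be smooth. I would address this by mollifying $V_{k,\rho}^\mu$ and $U_k^\mu$ (as in Theorem~\ref{molli}) and replacing $\chi_D$ by a smooth cutoff compactly supported in $D$ that increases to $\chi_D$, arranging the approximation so that the signs of the volume and surface contributions are preserved in the limit.
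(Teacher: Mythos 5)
You have the right candidate function and the right contradiction: your $\tilde V$ (equal to $U_k^{\mu}$ on $D$ and to $V_{k,\rho}^{\mu}$ elsewhere) is exactly the function the paper uses, and the easy inequality $\mu(D)\geq(\mu-\rho)^{+}(D)$ is handled correctly. The gap is the one you flag yourself, and it is not cosmetic. The identity $-(\Delta+k^{2})(u\chi_{D})=(\mu-\rho)|_{D}+\frac{\partial u}{\partial n}\,\sigma|_{\partial D}$ presupposes that $\partial D$ carries a surface measure and that $u$ has a one-sided normal derivative there; but $D$ is an arbitrary component of the open set $\{V_{k,\rho}^{\mu}<U_k^{\mu}\}$, so neither is available. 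Your proposed repair does not close this: writing $-(\Delta+k^{2})(u\varphi_{j})=\varphi_{j}\,(\mu-\rho)|_{D}-2\nabla u\cdot\nabla\varphi_{j}-u\Delta\varphi_{j}$ for smooth cutoffs $\varphi_{j}\nearrow\chi_{D}$, the two commutator terms concentrate near $\partial D$ and have no sign individually; showing that their sum has a nonpositive limit is essentially a restatement of the surface-term assertion you are trying to prove, and again requires boundary regularity you do not have.

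The paper closes precisely this step with Kato's inequality (the measure-theoretic form behind Lemma \ref{maxsub}, valid for any $L^{1}_{\mathrm{loc}}$ function whose Laplacian is a measure), which controls the singular contribution of $-(\Delta+k^{2})\tilde V$ on $\partial D$ with no regularity hypotheses; combined with $-(\Delta+k^{2})U_k^{\mu}=\mu\leq\rho$ on $D$ (the contradiction hypothesis) and $-(\Delta+k^{2})V_{k,\rho}^{\mu}=\mathcal{B}_{k}^{\rho}(\mu)\leq\rho$ off $\overline{D}$, this yields $\tilde V\in\mathcal{F}_{k}^{\rho}(\mu)$ directly. Note also that away from points of $\partial D$ that are limit points of the other components of $\omega_{k}^{\rho}(\mu)$ there is nothing to prove: near such a portion of $\partial D$ one has $\tilde V=U_k^{\mu}$ on a full neighbourhood (since $V_{k,\rho}^{\mu}=U_k^{\mu}$ on $\omega_{k}^{\rho}(\mu)^{c}$, where moreover $\mu=\mathcal{B}_{k}^{\rho}(\mu)\leq\rho$ by the Structure Theorem), so $-(\Delta+k^{2})\tilde V=\mu\leq\rho$ there with no boundary term at all. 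The whole difficulty therefore sits exactly where your surface-measure heuristic is least defensible. Replace the Green's identity step by the Kato argument and the proof is complete.
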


\begin{proof}
If there were a component $D$ of $\omega _{k}^{\rho }(\mu )$ such that $(\mu
-\rho )^{+}(D)=0$, then $\mu \leq \rho $ in $D$ and the function 
\begin{equation*}
V=\left\{ 
\begin{array}{l}
V_{k,\rho }^{\mu }\text{ in }D^{c} \\ 
U_{k}^{\mu }\text{ in }D%
\end{array}%
\right.
\end{equation*}%
would belong to $\mathcal{F}_{k}^{\rho }(\mu )$ by Kato's inequality, since $%
-(\Delta +k^{2})U_{k}^{\mu }\leq \rho $ in $D$ and $V\leq U_{k}^{\mu }$
everywhere. However, this leads to the contradictory conclusion that $V\leq
V_{k,\rho }^{\mu }<U_{k}^{\mu }$ in $D$.
\end{proof}

\begin{theorem}
\label{mpsat1}(i) If $D$ is a component of $\omega _{k}^{\rho }(\mu )$ and $%
\mathcal{F}_{k}^{\rho }(\mu +\eta )\neq \emptyset $ for some $\eta $ such
that $\eta (D)>0$, then $D$ satisfies the $k$-maximum principle. In
particular, if this property holds for every component $D$ of $\omega
_{k}^{\rho }(\mu )$, then $k^{2}<\lambda _{1}(\omega _{k}^{\rho }(\mu ))$. 
\newline
(ii) If $\mathcal{F}_{k}^{\rho }(\mu )\neq \emptyset $, then $k^{2}\leq
\lambda _{1}(\omega _{k}^{\rho }(\mu ))$.
\end{theorem}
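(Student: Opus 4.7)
The plan is to prove (ii) first by adding a scaled eigenfunction to $V_{k,\rho}^\mu$ to contradict its maximality, then to deduce (i) from (ii) together with a weighted integration-by-parts identity.

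\textbf{Proof of (ii).} Suppose for contradiction that some component $D$ of $\omega_k^\rho(\mu)$ has $\lambda_1(D)<k^2$, and let $h$ be the positive first Dirichlet eigenfunction of $-\Delta$ on $D$, so $h\in W_0^{1,2}(D)$ and $-\Delta h=\lambda_1(D)h$. Extend $h$ by zero to $\tilde h$ on $\mathbb{R}^N$. Inside $D$ we have $(\Delta+k^2)h=(k^2-\lambda_1(D))h>0$; across $\partial D$ the distributional Laplacian of $\tilde h$ picks up the nonpositive surface contribution $\partial_n h\cdot \sigma_{\partial D}$ by Hopf's lemma (with $\partial_n$ the outward normal from $D$). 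Thus $-(\Delta+k^2)\tilde h\le 0$ on $\mathbb{R}^N$, so $\tilde h$ is $k$-metasubharmonic. Setting $V=V_{k,\rho}^\mu$ and $u=U_k^\mu-V$, the functions $u$ and $h$ are positive in $D$, vanish on $\partial D$, and satisfy $\inf_D u/h>0$ by Hopf applied to $u$ as well; pick $t>0$ with $t\tilde h\le u$. Then
$$-(\Delta+k^2)(V+t\tilde h)=\mathcal{B}_k^\rho(\mu)+t\bigl(-(\Delta+k^2)\tilde h\bigr)\le \mathcal{B}_k^\rho(\mu)\le\rho,$$
while $V+t\tilde h\le U_k^\mu$ and $\{V+t\tilde h<U_k^\mu\}$ is bounded, so $V+t\tilde h\in\mathcal{F}_k^\rho(\mu)$. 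But $V+t\tilde h>V$ in $D$, contradicting the maximality of $V_{k,\rho}^\mu$.

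\textbf{Proof of (i).} Let $D^*$ be the unique component of $\omega_k^\rho(\mu+\eta)$ containing the connected set $D\subset\omega_k^\rho(\mu+\eta)$. By (ii) applied to $\mu+\eta$ and domain monotonicity of $\lambda_1$, $\lambda_1(D)\ge\lambda_1(D^*)\ge k^2$. Suppose for contradiction that $\lambda_1(D)=k^2$. If $D^*\supsetneq D$, then $\overline D$ is compactly contained in $D^*$, so $u_{k,\rho}^{\mu+\eta}$ is bounded below by a positive constant on $\overline D$, and the perturbation $V_{k,\rho}^{\mu+\eta}+t\tilde h$ from the proof of (ii)---whose interior contribution now vanishes since $\lambda_1(D)=k^2$, leaving only the nonpositive surface term---produces a strictly larger element of $\mathcal{F}_k^\rho(\mu+\eta)$, contradicting maximality. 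If $D^*=D$, test the distributional identity $-(\Delta+k^2)u_{k,\rho}^\nu=\nu-\rho$ (valid in $D$ for $\nu\in\{\mu,\mu+\eta\}$) against $h$, integrating by parts via $-\Delta h=\lambda_1(D)h$ and the Dirichlet conditions for $u_{k,\rho}^\nu$ and $h$ on $\partial D$, to obtain
$$(\lambda_1(D)-k^2)\int_D u_{k,\rho}^\nu\,h\,dm=\int_D h\,d\nu-\int_D h\rho\,dm.$$
Since $\lambda_1(D)=k^2$, the left-hand side vanishes for both choices of $\nu$; subtracting gives $\int_D h\,d\eta=0$, contradicting $\eta(D)>0$ and $h>0$ in $D$. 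Hence $\lambda_1(D)>k^2$, i.e.\ the $k$-maximum principle holds on $D$ by Proposition \ref{MP}, and the final sentence of (i) follows from Lemma \ref{evnoncon1}.

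\textbf{Main obstacle.} The delicate point is the boundary regularity required for the Hopf-lemma applications and for the integration-by-parts identity when $\mu$ carries singular mass, since $\partial D$ is a priori only the boundary of a general bounded open set. I would handle this by first mollifying $\mu$ via the kernel of Theorem \ref{molli} (so that $u_{k,\rho}^\mu$ is $C^{1,\alpha}$ and $\partial\omega_k^\rho(\mu_\delta)$ is $C^{1,\alpha}$ by the free-boundary regularity of the underlying obstacle problem), running the arguments above for the regularized data, and passing to the limit using monotone convergence of $V_{k,\rho}^{\mu_\delta}$ and the lower semicontinuity of $\lambda_1$ along inner exhaustions.
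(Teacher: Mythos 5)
Your proof of part (ii) (and the first case of part (i)) hinges on finding $t>0$ with $t\tilde h\le u$, where $u=U_k^\mu-V_{k,\rho}^\mu$, and you justify this by asserting $\inf_D u/h>0$ ``by Hopf applied to $u$''. This step fails, and not merely for lack of boundary regularity: $\partial D$ is a \emph{free} boundary. Since $u\ge 0$ on all of $\mathbb{R}^N$ and $u=0$ on $\omega_k^\rho(\mu)^c\supset\partial D$, every point of $\partial D$ is a global minimum of $u$, so $\nabla u$ vanishes there wherever it exists; moreover $-\Delta u=-\rho+k^2u<0$ near boundary points away from $\mathrm{supp}(\mu)$, so $u$ is a \emph{subsolution} there and Hopf's lemma gives no strict normal derivative. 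As in the classical obstacle problem, $u$ vanishes to second order at regular free boundary points while $h$ vanishes only to first order, so $\inf_D u/h=0$ and no positive $t$ exists. Mollifying $\mu$ does not repair this — it is intrinsic to the structure of partial balayage — and the proposed limiting argument would in any case risk circularity, since the weak-continuity result (Theorem \ref{wcont}) is itself proved using Theorem \ref{mpsat1}. In part (i), the claim that $D^*\supsetneq D$ forces $\overline D$ to be compactly contained in $D^*$ is also false ($\partial D\subset\partial\omega_k^\rho(\mu)$ may meet $\partial\omega_k^\rho(\mu+\eta)$). Your integration-by-parts identity in the case $D^*=D$ is an attractive idea, but it too needs justification: you are pairing a measure-data equation with an eigenfunction known only to lie in $W_0^{1,2}(D)$, and $u_{k,\rho}^\mu$ need not lie in $W^{1,2}$ when $\mu$ has infinite energy (e.g.\ a point mass).

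The paper's proof avoids eigenfunction perturbations altogether and goes in the opposite logical order. For (i) it sets $w=U_k^\eta+V_{k,\rho}^\mu-V_{k,\rho}^{\mu+\eta}$, deduces $w\ge 0$ from Proposition \ref{stbal}, and observes that $-(\Delta+k^2)w=\eta$ in $\omega_k^\rho(\mu)$ with $\eta(D)>0$; thus $w|_D$ is a positive $k$-metasuperharmonic function which is not a multiple of the eigenfunction, and Proposition \ref{MP} (the implication (c)$\Rightarrow$(a)) yields the $k$-maximum principle on $D$ with no boundary analysis whatsoever. For (ii) it scales down to $t\mu$ with $t<1$ (Lemma \ref{mon1}), uses Lemma \ref{meset} to see that $\mu$ charges every component of $\omega_k^\rho(t\mu)$, applies (i) with $\eta=(1-t)\mu$, and lets $t\to 1^-$, using $\omega_k^\rho(t\mu)\nearrow\omega_k^\rho(\mu)$. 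I would recommend reworking your argument around the characterization in Proposition \ref{MP}(c) rather than around eigenfunction comparison at the free boundary.
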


\begin{proof}
(i) Let $w=U_{k}^{\eta }+V_{k,\rho }^{\mu }-V_{k,\rho }^{\mu +\eta }$. By
Proposition \ref{stbal}, 
\begin{equation*}
V_{k,\rho }^{\mu +\eta }=V_{k,\rho }^{\mathcal{B}_{k}^{\rho }(\mu )+\eta
}\leq U_{k}^{\mathcal{B}_{k}^{\rho }(\mu )}+U_{k}^{\eta }=V_{k,\rho }^{\mu
}+U_{k}^{\eta },
\end{equation*}%
so $w\geq 0$. Since $-(\Delta +k^{2})w=\eta $\ in\ $\omega _{k}^{\rho }(\mu
) $ and $\eta (D)>0$, Proposition \ref{MP} shows that $D$ satisfies the $k$%
-maximum principle.

(ii) If $0<t<1$ and $\mathcal{F}_{k}^{\rho }(\mu )\neq \emptyset $, then $%
\mathcal{F}_{k}^{\rho }(t\mu )\neq \emptyset $ by Lemma \ref{mon1}, and so $%
\mu $ charges each component of $\omega _{k}^{\rho }(t\mu )$, by Lemma \ref%
{meset}. We can thus apply part (i), with $\eta =(1-t)\mu $, to see that $%
k^{2}<\lambda _{1}(\omega _{k}^{\rho }(t\mu ))$. Lemma \ref{mon1} also shows
that both $\omega _{k}^{\rho }(t\mu )$ and $U_{k}^{t\mu }-V_{k,\rho }^{t\mu
} $ increase with $t$. Since $V_{k,\rho }^{t\mu }\geq V_{k,\rho }^{\mu
}+U_{k}^{t\mu }-U_{k}^{\mu }$, by (\ref{mona}), we now see that $V_{k,\rho
}^{\mu }\leq \lim_{t\rightarrow 1-}V_{k,\rho }^{t\mu }\in \mathcal{F}%
_{k}^{\rho }(\mu )$, whence $\lim_{t\rightarrow 1-}V_{k,\rho }^{t\mu
}=V_{k,\rho }^{\mu }$. Thus $\omega _{k}^{\rho }(t\mu )\nearrow \omega
_{k}^{\rho }(\mu )$ as $t\rightarrow 1-$, and the desired conclusion follows.
\end{proof}

\bigskip

The following lemma is a generalization of a standard uniqueness result for
partial balayage. However, the assumption below that $k^{2}\leq \lambda
_{1}(D)$, which trivially holds when $k=0$, is essential when $k>0$.

\begin{lemma}
\label{unique} Suppose that $u=U_{k}^{\mu |_{D}}-U_{k}^{\rho |_{D}}\geq 0$
in $\mathbb{R}^{N}$, where $D=\{x:u(x)>0\}$ is bounded and $\mathrm{supp}%
(\mu) $ is compact. If $k^{2}\leq \lambda _{1}(D)$ and $\mu \leq \rho \text{
in }\mathbb{R}^{N}\setminus D$, then $\mathcal{F}_{k}^{\rho }(\mu )\neq
\emptyset $, $\omega _{k}^{\rho }(\mu )=D$ and $u=u_{k,\rho }^{\mu }$.
\end{lemma}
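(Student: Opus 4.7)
The plan is to exhibit an explicit element of $\mathcal{F}_k^\rho(\mu)$, then combine the maximality of $V_{k,\rho}^\mu$ with a testing argument against the first Dirichlet eigenfunction on $D$ to force the equality $u = u_{k,\rho}^\mu$.

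First I set $v = U_k^\mu - u = U_k^{\mu|_{D^c}} + U_k^{\rho|_D}$, so that $-(\Delta+k^2)v = \mu|_{D^c} + \rho|_D$, and by the hypothesis $\mu \leq \rho$ on $\mathbb{R}^N \setminus D$ this measure is bounded above by $\rho$. Together with $v \leq U_k^\mu$ (equivalent to $u \geq 0$) and the boundedness of $\{v < U_k^\mu\} = D$, this places $v$ in $\mathcal{F}_k^\rho(\mu)$. The maximality of $V_{k,\rho}^\mu$ then gives $v \leq V_{k,\rho}^\mu$, which rearranges to $u_{k,\rho}^\mu \leq u$, so $\omega_k^\rho(\mu) = \{u_{k,\rho}^\mu > 0\} \subset \{u > 0\} = D$. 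Setting $g = u - u_{k,\rho}^\mu \geq 0$ and $A = D \setminus \omega_k^\rho(\mu)$, I combine $-(\Delta+k^2)u = (\mu-\rho)|_D$ with the identity $-(\Delta+k^2)u_{k,\rho}^\mu = (\mu-\rho)|_{\omega_k^\rho(\mu)}$ (immediate from Theorem \ref{strthm}) to obtain $-(\Delta+k^2)g = (\mu-\rho)|_A$. Since $A \subset \omega_k^\rho(\mu)^c$, Theorem \ref{strthm} also gives $\mu|_A = \mathcal{B}_k^\rho(\mu)|_A \leq \rho|_A\,dm$, so $(\mu-\rho)|_A$ is a non-positive, absolutely continuous measure with bounded density; in particular $g$ is continuous and vanishes on $D^c$.

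To force $g \equiv 0$, I test against the first Dirichlet eigenfunction on each component of $D$. Let $D'$ be a component of $D$ and $h_{D'} > 0$ its positive eigenfunction with eigenvalue $\lambda_1(D') \geq \lambda_1(D) \geq k^2$ (the first inequality being Lemma \ref{evnoncon1}), extended by zero to $\tilde h_{D'} \in W^{1,2}(\mathbb{R}^N)$. Since $g$ is a $k$-potential of a bounded compactly supported density, $g \in W^{1,2}(\mathbb{R}^N)$ and $g|_{D'} \in W_0^{1,2}(D')$. Testing the weak form of $-(\Delta+k^2)g = (\mu-\rho)|_A$ against $\tilde h_{D'}$ and the weak eigenvalue equation for $h_{D'}$ against $g|_{D'}$, the gradient terms cancel and I obtain
\[
\int \tilde h_{D'} \, d[(\mu-\rho)|_A] = (\lambda_1(D') - k^2) \int_{D'} g\, h_{D'}\, dm .
\]
The left-hand side is $\leq 0$ (a nonnegative function integrated against a non-positive measure) while the right-hand side is $\geq 0$, so both vanish. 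Vanishing of the left-hand side together with $h_{D'} > 0$ on $D'$ forces $(\mu-\rho)|_{A \cap D'} = 0$; summing over components yields $(\mu-\rho)|_A = 0$, so $-(\Delta+k^2)g = 0$ on all of $\mathbb{R}^N$. Elliptic regularity makes $g$ real analytic, and since it vanishes on the nonempty open set $\mathbb{R}^N \setminus \overline{D}$, unique continuation forces $g \equiv 0$. Hence $u = u_{k,\rho}^\mu$, and so $\omega_k^\rho(\mu) = \{u > 0\} = D$.

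The main obstacle is the critical case $k^2 = \lambda_1(D)$: in the strict case $k^2 < \lambda_1(D)$ one could close the argument instead by applying the $k$-maximum principle (Proposition \ref{MP}) to the $k$-metasubharmonic function $g$ on $D$, but in the critical case that principle is unavailable and the rigidity must be extracted from the equality case of the pairing above between the nonnegative sub-solution $g$ and the strictly positive eigenfunction $h_{D'}$. The Sobolev regularity facts used to justify the weak pairing are routine but need to be checked, since neither $g$ nor $\tilde h_{D'}$ is smooth up to $\partial D'$.
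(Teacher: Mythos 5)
Your proof is correct, and its skeleton coincides with the paper's: the same competitor $v=U_{k}^{\mu |_{D^{c}}}+U_{k}^{\rho |_{D}}$ shows $\mathcal{F}_{k}^{\rho }(\mu )\neq \emptyset $, the same difference $g=u-u_{k,\rho }^{\mu }\geq 0$ satisfies $-(\Delta +k^{2})g=(\mu -\rho )|_{D\setminus \omega _{k}^{\rho }(\mu )}\leq 0$ via the Structure Theorem (Theorem \ref{strthm}), and both arguments close by noting that a nontrivial compactly supported $k$-metaharmonic function cannot exist. Where you genuinely diverge is in handling the critical case $k^{2}=\lambda _{1}(D)$. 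The paper invokes the dichotomy coming from its maximum-principle result (Proposition \ref{MP}): on each component $U$ of $D$, either $g\equiv 0$ or $g|_{U}$ is an eigenfunction for $\lambda _{1}(U)=k^{2}$, and the second alternative is excluded by unique continuation. You instead pair the equation for $g$ with the first Dirichlet eigenfunction $h_{D'}$ in the weak $W_{0}^{1,2}$ formulation, and the resulting sign identity forces $(\mu -\rho )|_{D\setminus \omega _{k}^{\rho }(\mu )}=0$ outright, after which unique continuation finishes. Your route is more variational and bypasses the dichotomy of Proposition \ref{MP} entirely, at the cost of the Sobolev bookkeeping you flag: one needs $g\in W^{1,2}(\mathbb{R}^{N})$ (true, since $g$ is the $k$-potential of a compactly supported bounded density) and that a continuous $W^{1,2}$ function vanishing outside the component $D'$ lies in $W_{0}^{1,2}(D')$; both are standard, and pairings of this flavour already appear in the proof of Proposition \ref{equiv}. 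The paper's version stays entirely within its potential-theoretic toolkit; yours makes the eigenvalue rigidity quantitative. Both are valid.
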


\begin{proof}
Since $u\geq 0$ and $-(\Delta +k^{2})u\geq \mu -\rho $ by assumption, $%
U_{k}^{\rho |_{D}}+U_{k}^{\mu |_{D^{c}}}\in \mathcal{F}_{k}^{\rho }(\mu )$,
the function $s=u-u_{k,\rho }^{\mu }$ is nonnegative and $\omega _{k}^{\rho
}(\mu )\subset D$. This implies that 
\begin{equation*}
-(\Delta +k^{2})s=(\mu -\rho )|_{D}-(\mu -\rho )|_{\omega _{k}^{\rho }(\mu
)}=(\mu -\rho )|_{D\setminus \omega _{k}^{\rho }(\mu )}\leq 0.
\end{equation*}%
For each component $U$ of $D$ we know by assumption that $\lambda
_{1}(U)\geq k^{2}$. It follows from Proposition 2.7 that either $s\equiv 0$
on $U$ or $s|_{U}$ is an eigenfunction corresponding to $k^{2}=\lambda
_{1}(U)$. However, the latter is impossible, because it would yield a
nontrivial $k$-metaharmonic function $s$ on $\mathbb{R}^{N}$ with compact
support.
\end{proof}

\begin{remark}
The assumptions above regarding $u$ and $D$ are essentially that $u\geq 0$
and that it satisifies $-(\Delta +k^{2})u=(\mu -\rho )|_{D}$ in $\mathbb{R}%
^{N}$, where $D=\{x:u(x)>0\}$ is bounded. It is however necessary to choose
the appropriate representative $U_{k}^{\mu |_{D}}-U_{k}^{\rho |_{D}}$ of the
distribution $u$ to ensure that $D=\omega _{k}^{\rho }(\mu )$.
\end{remark}

\begin{lemma}
\label{mon4} Let $\mu _{n}$ ($n\geq 1$) and $\mu $ be compactly supported
measures.\newline
(a) If $\mu _{n}\nearrow \mu $, $\mathcal{F}_{k}^{\rho }(\mu _{n})\neq
\emptyset $ for each $n$ and there is a compact set $K$ such that $\omega
_{k}^{\rho }(\mu _{n})\subset K$ for all $n$, then $\mathcal{F}_{k}^{\rho
}(\mu )\neq \emptyset $ and $u_{k,\rho }^{\mu _{n}}\nearrow u_{k,\rho }^{\mu
}$.\newline
(b) If $\mu _{n}\searrow \mu $ and $\mathcal{F}_{k}^{\rho }(\mu _{1})\neq
\emptyset $, then $u_{k,\rho }^{\mu _{n}}\searrow u_{k,\rho }^{\mu }$.
\end{lemma}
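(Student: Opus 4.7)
The proof rests on monotonicity (Lemma~\ref{mon1}) together with the minimality characterisation in Remark~\ref{umu}: $u_{k,\rho}^\mu$ is the smallest element of the collection $\{v:v\ge 0,\ -(\Delta+k^2)v\ge\mu-\rho\}$.

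For (a), set $u^{\ast}:=\lim_n u_{k,\rho}^{\mu_n}$. By Lemma~\ref{mon1} the sequence is increasing, and each term is supported in the compact set $K$, so $u^{\ast}$ is nonnegative and supported in $K$. Similarly $\mathcal{B}_k^\rho(\mu_n)$ is increasing with $\mathcal{B}_k^\rho(\mu_n)\le\rho$, so $B:=\lim\mathcal{B}_k^\rho(\mu_n)\le\rho$. Passing to the distributional limit in $-(\Delta+k^2)u_{k,\rho}^{\mu_n}=\mu_n-\mathcal{B}_k^\rho(\mu_n)$ gives $-(\Delta+k^2)u^{\ast}=\mu-B\ge\mu-\rho$. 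Hence $U_k^\mu-u^{\ast}\in\mathcal{F}_k^\rho(\mu)$, which confirms $\mathcal{F}_k^\rho(\mu)\neq\emptyset$, while Remark~\ref{umu} gives $u^{\ast}\ge u_{k,\rho}^\mu$. Combined with $u^{\ast}\le u_{k,\rho}^\mu$ from Lemma~\ref{mon1}, this yields $u^{\ast}=u_{k,\rho}^\mu$.

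For (b), the sequence $u_{k,\rho}^{\mu_n}$ is now decreasing and $u^{\ast}:=\lim u_{k,\rho}^{\mu_n}$ satisfies $u_{k,\rho}^\mu\le u^{\ast}$ by Lemma~\ref{mon1}. To obtain the reverse inequality, apply Proposition~\ref{stbal} with $\mu_1=\mu$, $\mu_2=\mu_n-\mu$ and $\rho_1=\rho$ to get $V_{k,\rho}^{\mu_n}=V_{k,\rho}^{\nu_n}$, where $\nu_n:=\mathcal{B}_k^\rho(\mu)+(\mu_n-\mu)$; a direct computation with this identity yields
\[
u_{k,\rho}^{\mu_n}-u_{k,\rho}^\mu = u_{k,\rho}^{\nu_n}.
\]
Since $\nu_n\searrow\mathcal{B}_k^\rho(\mu)\le\rho$, the task reduces to showing that $w^{\ast}:=\lim u_{k,\rho}^{\nu_n}$ vanishes. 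Now $w^{\ast}\ge 0$ is supported in $\overline{\omega_k^\rho(\nu_1)}$, and passing to the distributional limit while applying the Structure Theorem identifies $\sigma:=(\Delta+k^2)w^{\ast}$ as a nonnegative measure with support contained in $\overline{\omega^{\ast}}$ for $\omega^{\ast}:=\bigcap_n\omega_k^\rho(\nu_n)$. In particular $w^{\ast}$ is $k$-metaharmonic on the open set $\mathbb{R}^N\setminus\overline{\omega^{\ast}}$, and since it vanishes outside the compact set $\overline{\omega_k^\rho(\nu_1)}$, analyticity and unique continuation force $w^{\ast}=0$ on the unbounded component of $\mathbb{R}^N\setminus\overline{\omega^{\ast}}$. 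Then on $\omega^{\ast}$, the function $w^{\ast}$ is nonnegative, $k$-metasubharmonic, and vanishes on $\partial\omega^{\ast}$; since $\lambda_1(\omega^{\ast})\ge k^2$ by Theorem~\ref{mpsat1}(ii), the $k$-maximum principle on each component gives $w^{\ast}\le 0$ and hence $w^{\ast}=0$, with the borderline case $\lambda_1(\omega^{\ast})=k^2$ handled via a uniqueness argument in the spirit of Lemma~\ref{unique}.

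The main obstacle is the reverse inequality in (b): the monotonicity of Lemma~\ref{mon1} gives only one direction, and in principle the limit $w^{\ast}$ could pick up a nontrivial compactly supported $k$-metasubharmonic function --- such as the extension by zero of an eigenfunction on a subdomain where $\lambda_1=k^2$. The reduction through Proposition~\ref{stbal}, unique continuation outside $\overline{\omega^{\ast}}$, and the lower bound $\lambda_1(\omega^{\ast})\ge k^2$ from Theorem~\ref{mpsat1}(ii) are the ingredients that rule this out.
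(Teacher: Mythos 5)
Your part~(a) is the paper's argument almost verbatim: take the increasing limit, pass to the limit in $-(\Delta+k^{2})u_{k,\rho}^{\mu_{n}}=\mu_{n}-\mathcal{B}_{k}^{\rho}(\mu_{n})$ (the hypothesis $\omega_{k}^{\rho}(\mu_{n})\subset K$ being used exactly as you use it, to keep $\{v<U_{k}^{\mu}\}$ bounded), conclude $U_{k}^{\mu}-u^{\ast}\in\mathcal{F}_{k}^{\rho}(\mu)$, and sandwich via Remark~\ref{umu} and Lemma~\ref{mon1}. For part~(b) the substance is the same but the route differs. The paper works directly with $v=\lim_{n}u_{k,\rho}^{\mu_{n}}$ and $D=\{v>0\}$: it shows $-(\Delta+k^{2})v=(\mu-\rho)|_{D}$, obtains $\lambda_{1}(D)\geq k^{2}$ from $D\subset\omega_{k}^{\rho}(\mu_{n})$ and Theorem~\ref{mpsat1}, and then invokes Lemma~\ref{unique} wholesale. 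You instead peel off $u_{k,\rho}^{\mu}$ first via Proposition~\ref{stbal}; your identity $u_{k,\rho}^{\mu_{n}}-u_{k,\rho}^{\mu}=u_{k,\rho}^{\nu_{n}}$ is correct (it uses $U_{k}^{\mathcal{B}_{k}^{\rho}(\mu)}=V_{k,\rho}^{\mu}$), but note that $w^{\ast}=v-u_{k,\rho}^{\mu}$ is precisely the auxiliary function $s$ in the proof of Lemma~\ref{unique}, so everything after the reduction is an inline re-derivation of that lemma. Two remarks: the unique-continuation step is superfluous, since $\{u_{k,\rho}^{\nu_{n}}>0\}=\omega_{k}^{\rho}(\nu_{n})$ already forces $w^{\ast}=0$ off $\omega^{\ast}=\bigcap_{n}\omega_{k}^{\rho}(\nu_{n})$; and the borderline case $\lambda_{1}=k^{2}$ that you defer to ``an argument in the spirit of Lemma~\ref{unique}'' is exactly the content of that lemma (a nontrivial eigenfunction would yield a compactly supported $k$-metaharmonic function on $\mathbb{R}^{N}$), so it should simply be cited rather than gestured at. The detour through Proposition~\ref{stbal} buys nothing over the direct application of Lemma~\ref{unique}, but the argument is sound.
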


\begin{proof}
(a) By Remark \ref{umu} and the Structure Theorem $\left( u_{k,\rho }^{\mu
_{n}}\right) $ is increasing, and 
\begin{equation*}
-(\Delta +k^{2})u_{k,\rho }^{\mu _{n}}\geq \left( \mu _{n}-\rho \right)
\left\vert _{\omega _{k}^{\rho }(\mu _{n})}\right. \rightarrow \left( \mu
-\rho \right) \left\vert _{\cup _{n}\omega _{k}^{\rho }(\mu _{n})}\right. .
\end{equation*}%
Hence $\left( u_{k,\rho }^{\mu _{n}}\right) $ has a nonnegative limit $v$
which satisfies $-(\Delta +k^{2})v\geq \mu -\rho $. It follows that $%
U_{k}^{\mu }-v\in \mathcal{F}_{k}^{\rho }(\mu )$ and $u_{k,\rho }^{\mu }$
exists. Finally, by Remark \ref{umu} again, $u_{k,\rho }^{\mu }\geq
\lim_{n\rightarrow \infty }u_{k,\rho }^{\mu _{n}}=v\geq u_{k,\rho }^{\mu }$,
so $u_{k,\rho }^{\mu _{n}}\nearrow u_{k,\rho }^{\mu }$.

(b) In this case $\left( u_{k,\rho }^{\mu _{n}}\right) $ decreases to a
limit $v$ which satisfies $v\geq u_{k,\rho }^{\mu }$, and $\omega _{k}^{\rho
}(\mu )\subset D$, where $D=\{v>0\}$. Since 
\begin{equation*}
-(\Delta +k^{2})u_{k,\rho }^{\mu _{n}}\geq \mu _{n}-\rho \geq \mu -\rho
\end{equation*}%
for each $n$, we see that also 
\begin{equation*}
-(\Delta +k^{2})v\geq \mu -\rho .
\end{equation*}%
Further, since 
\begin{equation*}
-(\Delta +k^{2})u_{k,\rho }^{\mu _{n}}=\mu _{n}-\rho \text{ \ in }\omega
_{k}^{\rho }(\mu _{n})\text{ \ and \ }D\subset \omega _{k}^{\rho }(\mu _{n})%
\text{\ }(n\in \mathbb{N}),
\end{equation*}%
it follows that $-(\Delta +k^{2})v=\mu -\rho $ in $D$. We can now argue as
in the proof of the Structure Theorem too see that $-(\Delta +k^{2})v=\mu
-\rho $ in $\mathbb{R}^{N}$. Since $D\subset \omega _{k}^{\rho }(\mu _{n})$
for each $n$ it follows from Theorem \ref{mpsat1} that $k^{2}\leq \lambda
_{1}(D)$. Hence Lemma \ref{unique} applies, and we see that indeed $D=\omega
_{k}^{\rho }(\mu )$ and $v=u_{k,\rho }^{\mu }$.
\end{proof}

\begin{remark}
We believe that the assumption that $\omega _{k}^{\rho }(\mu _{n})\subset K$
for some compact set $K$ independent of $n$ in (a) is actually redundant,
but have only been able to prove this for constant $\rho $ (see Theorem \ref%
{geom} below). If $\mathcal{F}_{k}^{\rho }(\mu )\neq \emptyset $, then this
assumption, of course, automatically holds.
\end{remark}

\begin{lemma}
\label{mon2} Suppose that $\rho _{n}\searrow \rho $. If\newline
(i) $\mathcal{F}_{k}^{\rho _{n}}(\mu )\neq \emptyset $ for each $n$, and%
\newline
(ii) there is a compact set $K$ such that $\omega _{k}^{\rho _{n}}(\mu
)\subset K$ for each $n$,\newline
then 
\begin{equation*}
\mathcal{F}_{k}^{\rho }(\mu )\neq \emptyset,\, V_{k,\rho _{n}}^{\mu
}\searrow V_{k,\rho }^{\mu }\text{ and }\omega _{k}^{\rho _{n}}(\mu
)\nearrow \omega_k^{\rho }(\mu ).
\end{equation*}%
Further, if $\mathcal{F}_{k}^{\rho }(\mu )\neq \emptyset $, then conditions
(i) and (ii)\ automatically hold.
\end{lemma}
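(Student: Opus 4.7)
The plan is to define $v$ as the pointwise monotone decreasing limit of $V_{k,\rho_n}^\mu$ and identify it with $V_{k,\rho}^\mu$ after establishing that $\mathcal{F}_k^\rho(\mu)\neq\emptyset$. First, since $\rho\leq\rho_{n+1}\leq\rho_n$, we have $\mathcal{F}_k^{\rho_{n+1}}(\mu)\subset\mathcal{F}_k^{\rho_n}(\mu)$ and, when nonempty, $\mathcal{F}_k^\rho(\mu)\subset\mathcal{F}_k^{\rho_n}(\mu)$; hence $V_{k,\rho_n}^\mu\geq V_{k,\rho_{n+1}}^\mu$ and (when applicable) $V_{k,\rho_n}^\mu\geq V_{k,\rho}^\mu$. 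Let $v:=\lim_n V_{k,\rho_n}^\mu$; this is upper semicontinuous as an infimum of upper semicontinuous functions. Hypothesis (ii) gives $V_{k,\rho_n}^\mu=U_k^\mu$ off $K$, so $v=U_k^\mu$ off $K$ and $\{v<U_k^\mu\}\subset K$ is bounded.

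The core step is to show $v\in\mathcal{F}_k^\rho(\mu)$. By Theorem \ref{strthm},
\[
\mathcal{B}_k^{\rho_n}(\mu)=\rho_n|_{\omega_k^{\rho_n}(\mu)}+\mu|_{\omega_k^{\rho_n}(\mu)^c}
\]
is supported in the fixed compact set $K\cup\mathrm{supp}(\mu)$ with total mass bounded by $\rho_1\cdot m(K)+\mu(\mathbb{R}^N)$. Since $\Psi_k$ is locally integrable, Fubini delivers a uniform bound on $\int_B V_{k,\rho_n}^\mu\,dm=\int_B U_k^{\mathcal{B}_k^{\rho_n}(\mu)}\,dm$ for each ball $B$; combined with $V_{k,\rho_n}^\mu\leq V_{k,\rho_1}^\mu\in L^1_{\mathrm{loc}}$, monotone/dominated convergence shows $v\in L^1_{\mathrm{loc}}$ and $V_{k,\rho_n}^\mu\to v$ in $L^1_{\mathrm{loc}}$. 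Testing the distributional inequality $-(\Delta+k^2)V_{k,\rho_n}^\mu\leq\rho_n$ against any $\varphi\in C_c^\infty(\mathbb{R}^N)$ with $\varphi\geq 0$, and passing to the limit ($L^1_{\mathrm{loc}}$ convergence on the left, dominated convergence with $\rho_n\leq\rho_1\in L^\infty$ on the right), yields $-(\Delta+k^2)v\leq\rho$. Therefore $v\in\mathcal{F}_k^\rho(\mu)$ and $\mathcal{F}_k^\rho(\mu)\neq\emptyset$.

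The remainder is routine. Maximality of $V_{k,\rho}^\mu$ gives $v\leq V_{k,\rho}^\mu$, while the inequality $V_{k,\rho}^\mu\leq V_{k,\rho_n}^\mu$ (now available) forces $V_{k,\rho}^\mu\leq v$; thus $v=V_{k,\rho}^\mu$ and $V_{k,\rho_n}^\mu\searrow V_{k,\rho}^\mu$. The inclusions $\omega_k^{\rho_n}(\mu)\subset\omega_k^\rho(\mu)$ follow from $V_{k,\rho_n}^\mu\geq V_{k,\rho}^\mu$; conversely, any $x\in\omega_k^\rho(\mu)$ satisfies $V_{k,\rho_n}^\mu(x)<U_k^\mu(x)$ for some $n$ by pointwise decrease, so $\omega_k^\rho(\mu)\subset\bigcup_n\omega_k^{\rho_n}(\mu)$. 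For the converse half of the lemma, if $\mathcal{F}_k^\rho(\mu)\neq\emptyset$ then $V_{k,\rho}^\mu\in\mathcal{F}_k^{\rho_n}(\mu)$ since $\rho\leq\rho_n$, giving (i); and $V_{k,\rho}^\mu\leq V_{k,\rho_n}^\mu$ yields $\omega_k^{\rho_n}(\mu)\subset\omega_k^\rho(\mu)$, so (ii) holds with $K=\overline{\omega_k^\rho(\mu)}$.

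The main obstacle is verifying that the pointwise limit $v$ is a bona fide distribution satisfying $-(\Delta+k^2)v\leq\rho$; ruling out $v\equiv-\infty$ and obtaining $L^1_{\mathrm{loc}}$ convergence hinges on the uniform total-mass bound for $\mathcal{B}_k^{\rho_n}(\mu)$ supplied by the Structure Theorem together with the local integrability of $\Psi_k$.
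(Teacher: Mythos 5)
Your proposal is correct and follows essentially the same route as the paper's proof: take the decreasing limit $V$ of the $V_{k,\rho_n}^{\mu}$, show $V\in\mathcal{F}_k^{\rho}(\mu)$, identify $V=V_{k,\rho}^{\mu}$ by the two-sided maximality comparison, and deduce $\omega_k^{\rho_n}(\mu)\nearrow\omega_k^{\rho}(\mu)$ from the pointwise decrease. The only difference is that you spell out the $L^1_{\mathrm{loc}}$ convergence needed to pass the distributional inequality to the limit (via the uniform mass bound from the Structure Theorem), a step the paper asserts without detail.
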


\begin{proof}
Clearly $\mathcal{F}_{k}^{\rho _{m}}(\mu )\subset \mathcal{F}_{k}^{\rho
_{n}}(\mu )$ when $n\leq m$, so the sequence $(V_{k,\rho _{n}}^{\mu })$
decreases to some function $V$, and $(\omega _{k}^{\rho _{n}}(\mu ))$ is
increasing. Since $-(\Delta +k^{2})V_{k,\rho _{n}}^{\mu }\leq \rho _{n}$
with equality in $\omega _{k}^{\rho _{n}}(\mu )$, it follows that $-(\Delta
+k^{2})V\leq \rho $ with equality in $\cup _{n}\omega _{k}^{\rho _{n}}(\mu )$%
. Further, $V=U_{k}^{\mu }$ outside $\cup _{n}\omega _{k}^{\rho _{n}}(\mu )$%
, which is bounded. Hence $V\in \mathcal{F}_{k}^{\rho }(\mu )$ and $%
V=V_{k,\rho }^{\mu }$, and also $\omega _{k}^{\rho _{n}}(\mu )\nearrow
\omega _{k}^{\rho }(\mu )$.

The final assertion is clear since $\mathcal{F}_{k}^{\rho }(\mu )\subset 
\mathcal{F}_{k}^{\rho _{n}}(\mu )$ and $\omega _{k}^{\rho _{n}}(\mu )\subset
\omega _{k}^{\rho }(\mu )$ for each $n$.
\end{proof}

\begin{lemma}
\label{scale1} If $\mathcal{F}_{k}^{\rho }(\mu )\neq \emptyset $ and $t>1$,
then $\mathcal{F}_{k}^{t\rho }(t\mu )\neq \emptyset $ and $V_{k,t\rho
}^{t\mu }=tV_{k,\rho }^{\mu }.$
\end{lemma}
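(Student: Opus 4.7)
The plan is to exploit the linearity of both the potential $U_k^{(\cdot)}$ in its measure argument and of the differential operator $\Delta+k^2$. The key elementary observation is that, for any $t>0$ (not just $t>1$),
\[
U_k^{t\mu}=\int\Psi_k(\cdot-y)\,d(t\mu)(y)=tU_k^\mu.
\]
Using this, I would establish a simple bijection between the two admissible classes: $v\in\mathcal{F}_k^\rho(\mu)$ if and only if $tv\in\mathcal{F}_k^{t\rho}(t\mu)$. Indeed, multiplying each defining condition by $t$ preserves it: $-(\Delta+k^2)v\le\rho$ becomes $-(\Delta+k^2)(tv)\le t\rho$; the pointwise inequality $v\le U_k^\mu$ becomes $tv\le tU_k^\mu=U_k^{t\mu}$; and the coincidence set is unchanged, since $\{v<U_k^\mu\}=\{tv<U_k^{t\mu}\}$ is bounded in either formulation.

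From this bijection the nonemptiness assertion is immediate: if $V_{k,\rho}^\mu\in\mathcal{F}_k^\rho(\mu)$ exists, then $tV_{k,\rho}^\mu\in\mathcal{F}_k^{t\rho}(t\mu)$, so the latter class is nonempty. For the identification of maximal elements, I would argue by two inequalities. Since $tV_{k,\rho}^\mu$ lies in $\mathcal{F}_k^{t\rho}(t\mu)$ and $V_{k,t\rho}^{t\mu}$ is the largest element there, one has $V_{k,t\rho}^{t\mu}\ge tV_{k,\rho}^\mu$. Conversely, applying the bijection in reverse, $t^{-1}V_{k,t\rho}^{t\mu}\in\mathcal{F}_k^\rho(\mu)$, and therefore $t^{-1}V_{k,t\rho}^{t\mu}\le V_{k,\rho}^\mu$, i.e.\ $V_{k,t\rho}^{t\mu}\le tV_{k,\rho}^\mu$. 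Combining these yields the claimed equality.

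There is no genuine obstacle to this proof: the argument is purely formal and uses only linearity together with the definition of $V_{k,\rho}^\mu$ as the greatest element of its class. The only minor point worth flagging is that nothing in the argument requires $t>1$; the result holds for all $t>0$, the stated hypothesis $t>1$ merely reflecting the form in which scaling will be invoked later (for instance, in conjunction with Lemma \ref{mon1}, where enlarging the density is combined with enlarging the measure).
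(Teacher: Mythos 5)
Your proof is correct and is essentially the paper's own argument: the paper proves the lemma via precisely the same observation that $v\in\mathcal{F}_{k}^{\rho}(\mu)$ if and only if $tv\in\mathcal{F}_{k}^{t\rho}(t\mu)$, with the two-sided comparison of maximal elements left implicit. Your remark that $t>1$ is not needed is also accurate.
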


\begin{proof}
This follows from the observation that $v\in \mathcal{F}_{k}^{\rho }(\mu )$
if and only if $tv\in \mathcal{F}_{k}^{t\rho }(t\mu )$.
\end{proof}

\bigskip

\bigskip

The next lemma is based on standard reasoning, but we give a proof for the
sake of completeness.

\begin{lemma}
\label{liminf}Suppose that $\mu _{n}\rightharpoonup \mu $ and there is a
compact set $K$ such that $\mathrm{supp}(\mu _{n})\subset K$ for each $n$.
Then $U_{k}^{\mu }=\liminf_{n\rightarrow \infty }U_{k}^{\mu _{n}}$ $m$%
-almost everywhere, with equality on $K^{c}$.
\end{lemma}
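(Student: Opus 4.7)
The plan is to split the argument into two inequalities: a pointwise inequality $U_k^\mu \le \liminf_n U_k^{\mu_n}$ that holds everywhere, and the reverse inequality in the $m$-almost everywhere sense. Both rely on a key structural fact about the kernel: from Lemma \ref{Bessel}(vi), $\Psi_k(x) \sim c|x|^{-(N-2)}$ as $x \to 0$ with $c > 0$, so $\Psi_k$ is lower semicontinuous on $\mathbb{R}^N$, equals $+\infty$ at the origin, and is continuous on $\mathbb{R}^N \setminus \{0\}$. Together with the standard asymptotic $|Y_\alpha(t)| = O(t^{-1/2})$ as $t \to \infty$, it follows that $\Psi_k$ tends to $0$ at infinity and hence is bounded below on $\mathbb{R}^N$ by some constant $-M$.

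For the easy part: if $x \notin K$, then $y \mapsto \Psi_k(x - y)$ is continuous and bounded on a neighborhood of $K$, so the weak convergence $\mu_n \rightharpoonup \mu$ (with supports uniformly contained in $K$) gives $U_k^{\mu_n}(x) \to U_k^\mu(x)$ directly, yielding the asserted equality on $K^c$.

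For the everywhere inequality $U_k^\mu(x) \le \liminf_n U_k^{\mu_n}(x)$, I would write $\Psi_k = \sup_j \phi_j$ as an increasing pointwise supremum of continuous bounded functions, which is possible because $\Psi_k$ is lower semicontinuous and bounded below on the metric space $\mathbb{R}^N$. Weak convergence applied to each fixed $\phi_j$ gives $\int \phi_j(x-y)\, d\mu_n(y) \to \int \phi_j(x-y)\, d\mu(y)$, and the left-hand side is bounded above by $U_k^{\mu_n}(x)$; so passing to the $\liminf$ in $n$ and then letting $j \to \infty$ via monotone convergence recovers the claim for every $x$.

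For the reverse inequality $m$-almost everywhere, test against an arbitrary nonnegative continuous compactly supported $\phi$. By Fubini (justified by $\Psi_k \in L^1_{\mathrm{loc}}$ and the fact that $\mu_n(K)$ is uniformly bounded, which follows from the weak convergence and a cutoff argument),
\begin{equation*}
\int U_k^{\mu_n}\, \phi\, dm = \int (\Psi_k * \phi)\, d\mu_n,
\end{equation*}
and $\Psi_k * \phi$ is continuous and bounded, so the right-hand side converges to $\int (\Psi_k * \phi)\, d\mu = \int U_k^\mu\, \phi\, dm$. Using the uniform lower bound $U_k^{\mu_n} \ge -M\sup_n \mu_n(K) =: -C$, Fatou's lemma applied to the nonnegative integrand $U_k^{\mu_n} + C$ yields $\int (\liminf_n U_k^{\mu_n})\, \phi\, dm \le \int U_k^\mu\, \phi\, dm$; combined with the pointwise inequality already established, this forces $\liminf_n U_k^{\mu_n} = U_k^\mu$ $m$-almost everywhere, as $\phi$ was arbitrary. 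The main obstacle is the uniform lower bound on $\Psi_k$: since $Y_\alpha$ oscillates and takes negative values away from the origin, one must combine the positive blow-up at $0$ with the $O(t^{-1/2})$ decay at infinity to conclude global boundedness below.
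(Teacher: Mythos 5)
Your proof is correct and follows essentially the same strategy as the paper's: the lower semicontinuity and global lower bound of $\Psi _{k}$ give $U_{k}^{\mu }\leq \liminf_{n}U_{k}^{\mu _{n}}$ everywhere via monotone approximation by continuous bounded functions (the paper uses the explicit truncations $\min \{\Psi _{k},j\}$), while the reverse inequality $m$-a.e.\ comes from Fubini, Fatou, and weak convergence tested against the continuous bounded potential of a bounded compactly supported density. The only difference is in the bookkeeping of the second step: the paper integrates $U_{k}^{\mu _{n}}-U_{k}^{\mu }$ directly over the exceptional sets $A_{\varepsilon }=\{U_{k}^{\mu }\leq 1/\varepsilon ,\ \liminf_{n}(U_{k}^{\mu _{n}}-U_{k}^{\mu })\geq \varepsilon \}$ and shows $m(A_{\varepsilon })=0$, whereas you test against arbitrary nonnegative $\phi \in C_{c}$ and then invoke the pointwise inequality from the first step; your variant avoids the truncation $U_{k}^{\mu }\leq 1/\varepsilon $ because the uniform lower bound $U_{k}^{\mu _{n}}\geq -C$ already legitimises Fatou. (One small imprecision: for $N=2$ the singularity of $\Psi _{k}$ at the origin is logarithmic rather than $|x|^{-(N-2)}$, but the properties you actually use --- lower semicontinuity, blow-up to $+\infty $ at $0$, decay at infinity, global boundedness below --- all still hold.)
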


\begin{proof}
We observe that%
\begin{align*}
U_{k}^{\mu }& =\lim_{j\rightarrow \infty }\int \mathrm{min}\{\Psi
_{k}(x-\cdot ),j\}d\mu (x)=\lim_{j\rightarrow \infty }(\lim_{n\rightarrow
\infty }\int \mathrm{min}\{\Psi _{k}(x-\cdot ),j\}d\mu _{n}(x)) \\
& \leq \liminf_{n\rightarrow \infty }\int \Psi _{k}(x-\cdot )d\mu
_{n}(x)=\liminf_{n\rightarrow \infty }U_{k}^{\mu _{n}},
\end{align*}%
and define 
\begin{equation*}
A_{\varepsilon }=\{x\in {\mathbb{R}}^{N}:U_{k}^{\mu }(x)\leq 1/\varepsilon 
\text{ and }\liminf_{n\rightarrow \infty }(U_{k}^{\mu _{n}}(x)-U_{k}^{\mu
}(x))\geq \varepsilon \}\text{ \ \ }(\varepsilon >0).
\end{equation*}%
Then, for any ball $B$, 
\begin{align*}
\varepsilon m(A_{\varepsilon }\cap B)& \leq \int_{A_{\varepsilon }\cap
B}(\liminf_{n\rightarrow \infty }(U_{k}^{\mu _{n}}(y)-U_{k}^{\mu }(y)))dm(y)
\\
& \leq \liminf_{n\rightarrow \infty }\int_{A_{\varepsilon }\cap
B}(U_{k}^{\mu _{n}}(y)-U_{k}^{\mu }(y))dm(y) \\
& =\liminf_{n\rightarrow \infty }\int_{A_{\varepsilon }\cap B}(\int \Psi
_{k}(x-y)d(\mu _{n}-\mu )(x))dm(y) \\
& =\liminf_{n\rightarrow \infty }\int (\int_{A_{\varepsilon }\cap B}\Psi
_{k}(x-y)dm(y))d(\mu _{n}-\mu )(x)=0.
\end{align*}%
Hence $m(A_{\varepsilon })=0$ for each $\varepsilon >0$ and so $U_{k}^{\mu
}=\liminf_{n\rightarrow \infty }U_{k}^{\mu _{n}}$ \ $m$-almost everywhere.
The equality on $K^{c}$ is obvious.
\end{proof}

\begin{theorem}
\label{wcont}Suppose that $\mu _{n}\rightharpoonup \mu $, where each $%
\mathcal{F}_{k}^{\rho }(\mu _{n})$ is non-empty and there is a compact set $%
K $ such that $\mathrm{supp}(\mu _{n})\subset K$ and $\omega _{k}^{\rho
}(\mu _{n})\subset K$ for each $n$. Then $\mathcal{F}_{k}^{\rho }(\mu )$ is
also non-empty and $V_{k,\rho }^{\mu _{n}}\rightarrow V_{k,\rho }^{\mu }$
uniformly in ${\mathbb{R}}^{N}$.
\end{theorem}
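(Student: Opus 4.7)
The plan is a compactness argument. By the Structure Theorem, $\nu_n := \mathcal{B}_k^\rho(\mu_n)$ has a density $f_n \in L^\infty$ satisfying $0 \leq f_n \leq \|\rho\|_\infty$, supported in $K$. Banach-Alaoglu gives a subsequence $f_{n_j} \rightharpoonup f$ weak-$\ast$ in $L^\infty(K)$ with $0 \leq f \leq \rho$; set $\nu = f\,dm$ and $V := U_k^\nu$. Since $\Psi_k(x-\cdot) \in L^1(K)$ for every $x$, weak-$\ast$ convergence gives pointwise $V_{k,\rho}^{\mu_{n_j}}(x) = U_k^{\nu_{n_j}}(x) \to V(x)$ for every $x$. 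Equicontinuity on bounded sets (the modulus of continuity being controlled by $\|\rho\|_\infty\|\Psi_k(x-\cdot)-\Psi_k(y-\cdot)\|_{L^1(K)}$, uniformly in $j$), together with uniform decay at infinity ($|\Psi_k(x-y)| = O(|x|^{-(N-1)/2})$ for $y \in K$) and Arzel\`a-Ascoli, promote this to uniform convergence on $\mathbb{R}^N$.

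Next I verify $V \in \mathcal{F}_k^\rho(\mu)$. The inequality $-(\Delta+k^2)V = \nu \leq \rho$ is immediate. For $x \in K^c$, $V_{k,\rho}^{\mu_{n_j}}(x) = U_k^{\mu_{n_j}}(x) \to U_k^\mu(x)$ by weak convergence of $\mu_{n_j}$ tested against $\Psi_k(x-\cdot) \in C(K)$; hence $V = U_k^\mu$ on $K^c$, so $\{V < U_k^\mu\}$ is bounded. To obtain $V \leq U_k^\mu$ everywhere, I pass to $\liminf_j$ in $V_{k,\rho}^{\mu_{n_j}} \leq U_k^{\mu_{n_j}}$ and apply Lemma \ref{liminf} to get $V \leq U_k^\mu$ $m$-almost everywhere; continuity of $V$ together with lower semicontinuity of $U_k^\mu$ upgrades this to a pointwise inequality. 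Hence $\mathcal{F}_k^\rho(\mu) \neq \emptyset$ and $V \leq V_{k,\rho}^\mu$.

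The main obstacle is to upgrade $V \leq V_{k,\rho}^\mu$ to equality, whence a subsequence-of-subsequence argument yields convergence of the whole sequence. I would apply Lemma \ref{unique} to $u := U_k^\mu - V = U_k^{\mu-\nu}$ on the open bounded set $D := \{u > 0\}$, where $\mu = \nu \leq \rho$ on $D^c$. The key claim is $\nu = \rho$ on $D$: for $m$-almost every $x \in D$, combining $V(x) < U_k^\mu(x) = \liminf_j U_k^{\mu_{n_j}}(x)$ (from Lemma \ref{liminf}) with uniform convergence $V_{k,\rho}^{\mu_{n_j}} \to V$ forces $V_{k,\rho}^{\mu_{n_j}}(x) < U_k^{\mu_{n_j}}(x)$ for all large $j$, so $x \in \omega_k^\rho(\mu_{n_j})$ and $f_{n_j}(x) = \rho(x)$ eventually; the weak-$\ast$ limit then yields $f = \rho$ $m$-a.e.\ on $D$, giving $u = U_k^{\mu|_D} - U_k^{\rho|_D}$. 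The remaining hypothesis $k^2 \leq \lambda_1(D)$ is the most delicate: using $\lambda_1(\omega_k^\rho(\mu_{n_j})) \geq k^2$ (Theorem \ref{mpsat1}(ii)) and the pointwise inclusion $D \subset \liminf_j \omega_k^\rho(\mu_{n_j})$ just established, one exhausts $D$ by open subsets set-theoretically contained in some $\omega_k^\rho(\mu_{n_j})$ and applies Lemma \ref{evnoncon1} with the domain monotonicity of $\lambda_1$. Lemma \ref{unique} then delivers $V = V_{k,\rho}^\mu$.
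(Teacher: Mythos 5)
Your overall architecture coincides with the paper's: weak-$\ast$ compactness of $\mathcal{B}_k^\rho(\mu_{n})$, uniform convergence of the potentials via equicontinuity and decay of $\Psi_k$, membership of the limit $V=U_k^\nu$ in $\mathcal{F}_k^\rho(\mu)$ via Lemma \ref{liminf}, and identification of $V$ with $V_{k,\rho}^\mu$ through Lemma \ref{unique}. Your route to $\nu=\rho$ on $D$ (pointwise eventual membership of each $x\in D$ in $\omega_k^\rho(\mu_{n_j})$, then dominated convergence against the weak-$\ast$ limit) is a legitimate and arguably cleaner variant of the paper's derivation of $-(\Delta+k^2)u=\mu-\rho$ in $D$. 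But there is a genuine gap exactly at the step you single out as delicate, the proof that $k^2\leq\lambda_1(D)$. What you have established is the pointwise statement that every $x\in D$ lies in $\omega_k^\rho(\mu_{n_j})$ for all $j\geq j_0(x)$; what you need is that a given compact subset $\tilde K\subset D$ lies in a \emph{single} $\omega_k^\rho(\mu_{n_j})$ (otherwise you cannot transfer $\lambda_1(\omega_k^\rho(\mu_{n_j}))\geq k^2$ to $\tilde K^\circ$ by domain monotonicity). Pointwise eventual membership does not imply this: the sets $\omega_k^\rho(\mu_{n_j})$ could omit a thin slit that moves around $\tilde K$ as $j$ grows. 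The obstruction is that $U_k^{\mu_{n_j}}$ does \emph{not} converge to $U_k^\mu$ uniformly (nor even pointwise everywhere) on $\tilde K$ --- Lemma \ref{liminf} only gives a $\liminf$ inequality, with a rate depending on $x$ --- so the inequality $V_{k,\rho}^{\mu_{n_j}}(x)<U_k^{\mu_{n_j}}(x)$ is obtained for each $x$ at a non-uniform threshold $j_0(x)$. The paper closes this hole with the mollification of Theorem \ref{molli}: since $U_k^{\mu_{n_j}}\geq U_k^{\mu_{n_j}^{\delta}}$, since $U_k^{\mu_{n_j}^{\delta}}\to U_k^{\mu^{\delta}}$ \emph{uniformly} for fixed $\delta$, and since $U_k^{\mu^{\delta}}\nearrow U_k^{\mu}$ with a Dini-type compactness argument giving $U_k^{\mu^{\delta}}-U_k^{\nu}>2\varepsilon$ on $\tilde K$ for some fixed $\delta$, one gets $\tilde K\subset\omega_k^\rho(\mu_{n_j})$ for all large $j$. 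Some such device is indispensable, and your sketch does not supply it.

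A second, more minor, omission: to invoke Lemma \ref{unique} you need $u=U_k^{\mu|_D}-U_k^{\rho|_D}$, which requires not only $\nu=\rho$ a.e.\ on $D$ but also $\mu=\nu$ on $D^c$, including on $\partial D$. You assert this without argument. On the interior of $D^c$ it follows from $u\equiv 0$, but on $\partial D$ it requires the Kato-inequality/singularity-of-harmonic-measure argument from the proof of the Structure Theorem (Theorem \ref{strthm}), which is how the paper disposes of it (``as in the proof of the Structure Theorem''). This is fixable by citation, but it is not automatic.
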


\begin{proof}
By the Structure Theorem the measures $\mathcal{B}_{k}^{\rho }(\mu _{n})$
all have support in $K$ and they have uniformly bounded total mass. Thus, if
we choose any subsequence $\left( \mathcal{B}_{k}^{\rho }(\mu
_{n_{j}})\right) $ that is weak* convergent in $L^{\infty }$ to some $\nu $,
and write $\nu _{j}=\mathcal{B}_{k}^{\rho }(\mu _{n_{j}})$, then $\nu \leq
\rho $, the support of $\nu $ is contained in $K$, and $U_{k}^{\nu
_{j}}\rightarrow U_{k}^{\nu }$ pointwise as $j\rightarrow \infty $. We
recall that there exists $c>0$ such that $c^{-1}\leq \rho \leq c$. Since $%
\nu _{j}\leq \rho $, we see that 
\begin{equation*}
\left\vert U_{k}^{\nu _{j}}(x)-U_{k}^{\nu _{j}}(y)\right\vert \leq
c\int_{K}\left\vert \Psi _{k}(x-z)-\Psi _{k}(y-z)\right\vert dm(z),
\end{equation*}%
which implies that the family $(U_{k}^{\nu _{j}})$ is uniformly
equicontinuous. Since also $\Psi _{k}(x)\rightarrow 0$ as $|x|\rightarrow
\infty $ it follows that 
\begin{equation}
U_{k}^{\nu _{j}}\rightarrow U_{k}^{\nu }\text{ \ uniformly in \ }{\mathbb{R}}%
^{N}.  \label{unif}
\end{equation}%
Our aim is to show that $V_{k,\rho }^{\mu }$ exists and equals $U_{k}^{\nu }$%
. The theorem will then follow since $\left( \mathcal{B}_{k}^{\rho }(\mu
_{n_{j}})\right) $ was an arbitrary weak* convergent subsequence of $\left( 
\mathcal{B}_{k}^{\rho }(\mu _{n})\right) $.

By Lemma \ref{liminf}, 
\begin{equation*}
U_{k}^{\mu }=\liminf_{j\rightarrow \infty }U_{k}^{\mu _{n_{j}}}\geq
\liminf_{j\rightarrow \infty }V_{k,\rho }^{\mu
_{n_{j}}}=\liminf_{j\rightarrow \infty }U_{k}^{\nu _{j}}=U_{k}^{\nu },\text{
\ }m\text{-almost everywhere,}
\end{equation*}%
so $U_{k}^{\nu }\leq U_{k}^{\mu }$ everywhere. Since $U_{k}^{\nu
}=U_{k}^{\mu }$ in $K^{c}$, we now see that $U_{k}^{\nu }\in \mathcal{F}%
_{k}^{\rho }(\mu )$ and 
\begin{equation*}
U_{k}^{\nu }\leq V_{k,\rho }^{\mu }\text{, \ with equality outside }K.
\end{equation*}

Now let 
\begin{equation*}
u_{k,\rho }^{\mu _{n_{j}}}=U_{k}^{\mu _{n_{j}}}-V_{k,\rho }^{\mu
_{n_{j}}}=U_{k}^{\mu _{n_{j}}}-U_{k}^{\nu _{j}},
\end{equation*}%
\begin{equation*}
u=U_{k}^{\mu }-U_{k}^{\nu },
\end{equation*}%
and 
\begin{equation*}
D=\{x:u(x)>0\}.
\end{equation*}%
We need to show that $u=u_{k,\rho }^{\mu }$. Clearly $u\geq u_{k,\rho }^{\mu
}$, so $\omega _{k}^{\rho }(\mu )\subset D$. Let $\tilde{K}$ be a compact
subset of $D$. Then there exists $\varepsilon >0$ such that 
\begin{equation*}
\tilde{K}\subset \{x:u(x)>3\varepsilon \},
\end{equation*}%
and since $U_{k}^{\mu ^{\delta }}\nearrow U_{k}^{\mu }$ as $\delta \searrow
0 $ by Theorem \ref{molli}, there exists $\delta \in (0,3R_{k})$ such that 
\begin{equation*}
\tilde{K}\subset \{x:U_{k}^{\mu ^{\delta }}(x)-U_{k}^{\nu }(x)>2\varepsilon
\}.
\end{equation*}

Further, since $U_{k}^{\nu _{j}}\rightarrow U_{k}^{\nu }$ uniformly by (\ref%
{unif}) and, by similar reasoning, $(U_{k}^{\mu _{n_{j}}^{\delta }})$ also
converges uniformly to $U_{k}^{\mu ^{\delta }}$ (for fixed $\delta \in
(0,3R_{k})$), there exists $j_{0}$ such that 
\begin{equation*}
U_{k}^{\mu _{n_{j}}^{\delta }}-U_{k}^{\nu }>\varepsilon \text{ on }%
\widetilde{K}\text{ for all }j\geq j_{0}
\end{equation*}%
and 
\begin{equation*}
U_{k}^{\nu }-U_{k}^{\nu _{j}}\geq -\varepsilon \text{ for all }j\geq j_{0}.
\end{equation*}%
Therefore 
\begin{align*}
\tilde{K}& \subset \{x:U_{k}^{\mu _{n_{j}}^{\delta }}(x)-U_{k}^{\nu
}(x)>\varepsilon \} \\
& \subset \{x:U_{k}^{\mu _{n_{j}}}(x)-U_{k}^{\nu }(x)>\varepsilon \}\subset
\omega _{k}^{\rho }(\mu _{n_{j}})\text{ \ for all }j\geq j_{0}.
\end{align*}%
Hence, by Theorem \ref{mpsat1}, the interior $\tilde{K}^{\circ }$ satisfies $%
k^{2}\leq \lambda _{1}(\tilde{K}^{\circ })$, and since this holds for any
such $\tilde{K}$ it follows that $k^{2}\leq \lambda _{1}(D)$.

Since $-(\Delta +k^{2})u_{k,\rho }^{\mu _{n_{j}}}=\mu _{n_{j}}-\rho $ in $%
\omega _{k}^{\rho }(\mu _{n_{j}})$ for each $j$, it follows that $-(\Delta
+k^{2})u=\mu -\rho $ in each $\tilde{K}^{\circ }$, and hence in $D$. As in
the proof of the Structure Theorem we see that $-(\Delta +k^{2})u=(\mu -\rho
)|_{D}$ in $\mathbb{R}^{N}$, so Lemma \ref{unique} now shows that $%
u=u_{k,\rho }^{\mu }$ as claimed.
\end{proof}

\begin{remark}
We conjecture that, in Theorem \ref{wcont}, the assumption that $\omega
_{k}^{\rho }(\mu _{n})\subset K$ is redundant; that is, for any given
compact set $K$ we expect that there exists $R$, depending on $K$ and $\rho $%
, such that $\omega _{k}^{\rho }(\mu )\subset B_{R}$ for every $\mu $
supported by $K$ such that $\mathcal{F}_{k}^{\rho }(\mu )\neq \emptyset $.
However, we have only been able to prove this for constant $\rho $ (see
Theorem \ref{geom} below).
\end{remark}

\section{A Hele-Shaw type law and quadrature identities\label{HeleShaw}}

Here we prove a domain evolution result for the Helmholtz equation that is
inspired by the study of Hele-Shaw flow for Laplace's equation (see Section %
\ref{Intro}).

\begin{theorem}
\label{HS2}Let $\Omega $ and $D$ be bounded open sets such that $\overline{%
\Omega }\subset D$ and $\lambda _{1}(D)\geq k^{2}$, and let $\mu _{t}=t\eta
+\rho |_{\Omega }$ $(t>0)$, where $\mathrm{supp}(\eta )\subset \Omega $ and $%
\eta (\widetilde{\Omega })>0$ for each component $\widetilde{\Omega }$ of $%
\Omega $. Then the set 
\begin{equation*}
I=\{t>0:\mathcal{F}_{k}^{\rho }(\mu _{t})\neq \emptyset \}
\end{equation*}%
is an interval, and $T:=\sup I<\infty $. Further, $\Omega \subset \omega
_{k}^{\rho }(\mu _{t})$ for all $t\in I$ and

\begin{itemize}
\item[(a)] $\lambda _{1}(\omega _{k}^{\rho}(\mu _{t}))>k^{2}$ when $0<t<T$;

\item[(b)] $\omega _{k}^{\rho}(\mu _{t})\nearrow \omega _{k}^{\rho}(\mu
_{s}) $ as $t\nearrow s$ for any $s\in I$;

\item[(c)] the following Hele-Shaw law holds: 
\begin{equation*}
\omega _{k}^{\rho }(\mu _{t+\varepsilon })=\omega _{k}^{\rho }\left( \rho
|_{\omega _{k}^{\rho }(\mu _{t})}+\varepsilon \eta ^{\omega _{k}^{\rho }(\mu
_{t})^{c}}\right) \text{\ \ if }0<t<t+\varepsilon \in I.
\end{equation*}
\end{itemize}
\end{theorem}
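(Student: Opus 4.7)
I would prove the interval property via monotonicity: if $s\in I$ and $0<t<s$, then $\mu_t\le\mu_s$, so Lemma \ref{mon1} gives $\mathcal{F}_k^\rho(\mu_t)\supset\mathcal{F}_k^\rho(\mu_s)\neq\emptyset$. For the inclusion $\Omega\subset\omega_k^\rho(\mu_t)$, I plan to argue on each component $\widetilde\Omega$ of $\Omega$ that the nonnegative function $u:=u_{k,\rho}^{\mu_t}$ satisfies the distributional inequality $-(\Delta+k^2)u\ge t\eta|_{\widetilde\Omega}$, a nonzero measure on $\widetilde\Omega$; Lemma \ref{basic}(a) applied to $-u$ then rules out $u\equiv 0$ on $\widetilde\Omega$, so $u>0$ there. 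For $T<\infty$, I would combine the Faber--Krahn bound $m(\omega_k^\rho(\mu_t))\le m(B_{R_k})$ (using Theorem \ref{mpsat1}(ii)) with the quadrature inequality $\rho(\omega_k^\rho(\mu_t))\ge\mu_t(\mathbb{R}^N)=t\eta(\mathbb{R}^N)+\rho(\Omega)$, obtained by testing $\mathcal{B}_k^\rho(\mu_t)\ge\mu_t$ against the $k$-metasubharmonic constant $s\equiv 1$.

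For (a) I would apply Theorem \ref{mpsat1}(i) with $\eta'=\varepsilon\eta$ for some $\varepsilon>0$ with $t+\varepsilon\in I$: by Lemma \ref{meset} every component $D$ of $\omega_k^\rho(\mu_t)$ is charged by $(\mu_t-\rho)^+=t\eta$, so $\varepsilon\eta(D)>0$, giving $\lambda_1(D)>k^2$; Lemma \ref{evnoncon1} upgrades this to $\lambda_1(\omega_k^\rho(\mu_t))>k^2$. Part (b) is immediate from Lemma \ref{mon4}(a) applied with $K=\overline{\omega_k^\rho(\mu_s)}$.

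For the Hele--Shaw law (c), set $\omega_0=\omega_k^\rho(\mu_t)$, $\eta^*=\eta^{\omega_0^c}$, $\tau_1=\rho|_{\omega_0}+\varepsilon\eta$, $\tau_2=\rho|_{\omega_0}+\varepsilon\eta^*$, and $w:=U_k^\eta-U_k^{\eta^*}$, so that $w\ge 0$, $w=0$ q.e.\ on $\omega_0^c$, and $U_k^{\tau_2}=U_k^{\tau_1}-\varepsilon w$. Proposition \ref{stbal} with $\mu_1=\mu_t$ and $\mu_2=\varepsilon\eta$ yields $V_k^{\mu_{t+\varepsilon}}=V_k^{\tau_1}$, so it suffices to prove $\omega_k^\rho(\tau_1)=\omega_k^\rho(\tau_2)$. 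I would first establish $V_k^{\tau_1}=V_k^{\tau_2}$ by a two-sided inclusion in $\mathcal{F}_k^\rho$: $V_k^{\tau_2}\in\mathcal{F}_k^\rho(\tau_1)$ is immediate from $U_k^{\tau_2}\le U_k^{\tau_1}$, and the reverse $V_k^{\tau_1}\in\mathcal{F}_k^\rho(\tau_2)$ reduces to the inequality $u_{k,\rho}^{\tau_1}\ge\varepsilon w$, which I would prove by noting that $u_{k,\rho}^{\tau_1}-\varepsilon w$ is $k$-metasuperharmonic on $\omega_0$ with nonnegative boundary values and invoking the $k$-MP on $\omega_0$ delivered by (a).

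The hard part is then passing from the potential identity $V:=V_k^{\tau_1}=V_k^{\tau_2}$ to the set identity $\{V<U_k^{\tau_1}\}=\{V<U_k^{\tau_2}\}$. Only the inclusion $\subset$ requires work, and is automatic on $\omega_0^c$ (where $U_k^{\tau_1}=U_k^{\tau_2}$ q.e.); on $\omega_0$, the function $u_{k,\rho}^{\tau_2}=u_{k,\rho}^{\tau_1}-\varepsilon w$ is $k$-metaharmonic and nonnegative, so by Lemma \ref{basic}(a) it is either strictly positive or identically zero on each component $D'$ of $\omega_0$. To exclude the latter I would use a per-component quadrature: letting $D$ be the component of $\omega_k^\rho(\tau_1)$ containing $D'$, the function $\chi_D$ is $k$-metasubharmonic on $\omega_k^\rho(\tau_1)$ (continuous there, with $(\Delta+k^2)\chi_D=k^2\chi_D\ge 0$), so the quadrature inequality gives $\rho(D)\ge\tau_1(D)=\rho(D\cap\omega_0)+\varepsilon\eta(D)$; thus $\rho(D\setminus\omega_0)>0$, $D\supsetneq D'$, and by connectedness $\partial D'\cap D\ne\emptyset$. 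On this last set $u_{k,\rho}^{\tau_1}>0$ while $w=0$ q.e., so lower semicontinuity of $u_{k,\rho}^{\tau_2}$ precludes its identical vanishing on $D'$.
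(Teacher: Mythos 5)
Your proposal omits the first substantive step of the paper's proof: showing that $I\neq\emptyset$. The conclusions $T=\sup I<\infty$ and (a)--(c) presuppose this, and it is not automatic. The obvious candidate $U_{k}^{\rho |_{\Omega }}+V_{k,\rho }^{t\eta }$ for small $t$ fails, because $\mathcal{B}_{k}^{\rho }(t\eta )$ equals $\rho$ on $\omega _{k}^{\rho }(t\eta )$, which overlaps $\Omega$, so adding $\rho |_{\Omega }$ violates the constraint $-(\Delta +k^{2})v\leq \rho$. The paper first sweeps $\eta$ onto the boundary of an intermediate smoothly bounded set $W$ with $\overline{\Omega }\subset W\subset \overline{W}\subset D$ (solving the Dirichlet problem on $W$, which is legitimate since $\lambda _{1}(W)>k^{2}$), obtaining a measure $\eta ^{\prime }$ on $\partial W$, and then uses Proposition \ref{mollexist} to ensure $\mathrm{supp}(\mathcal{B}_{k}^{\rho }(\varepsilon \eta ^{\prime }))\subset D\setminus \overline{\Omega }$ for small $\varepsilon$, so that $U_{k}^{\rho |_{\Omega }}+V_{k,\rho }^{\varepsilon \eta ^{\prime }}\in \mathcal{F}_{k}^{\rho }(\mu _{\varepsilon })$. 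Note that this is the only place the hypothesis $\lambda _{1}(D)\geq k^{2}$ enters; your proposal never uses that hypothesis, which is a sign that something is missing.

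Your argument for $T<\infty$ also has a flaw as stated: from $\lambda _{1}(\omega _{k}^{\rho }(\mu _{t}))\geq k^{2}$ and Faber--Krahn one only gets $m(D)\leq m(B_{R_{k}})$ for each \emph{component} $D$ of $\omega _{k}^{\rho }(\mu _{t})$, not for the whole set (consider many disjoint balls of radius just below $R_{k}$); since $\Omega$, and hence $\omega _{k}^{\rho }(\mu _{t})$, may have infinitely many components, the global bound $m(\omega _{k}^{\rho }(\mu _{t}))\leq m(B_{R_{k}})$ is unjustified. The idea is salvageable and is genuinely different from the paper's (which instead compares with classical $k=0$ partial balayage via Lemma \ref{monink} and shows $\omega _{0}^{c}(\mu _{t})$ eventually contains a ball $B_{R}$ with $\lambda _{1}(B_{R})<k^{2}$): fix one component $\widetilde{\Omega }_{0}$ of $\Omega$, let $D_{t}$ be the component of $\omega _{k}^{\rho }(\mu _{t})$ containing it, and test the quadrature inequality of Theorem \ref{qd2} with $s=\chi _{D_{t}}$ to get $t\eta (\widetilde{\Omega }_{0})\leq \rho (D_{t})\leq c\,m(B_{R_{k}})$. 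The remainder of your proposal is sound and matches the paper's route for (a), (b) and the potential identity in (c); indeed you are more careful than the paper in passing from $V_{k,\rho }^{\tau _{1}}=V_{k,\rho }^{\tau _{2}}$ to the set identity. Just note that the same potential-versus-set issue already occurs one step earlier: reducing (c) to $\omega _{k}^{\rho }(\tau _{1})=\omega _{k}^{\rho }(\tau _{2})$ requires $\omega _{k}^{\rho }(\mu _{t+\varepsilon })=\omega _{k}^{\rho }(\tau _{1})$, which does not follow from $V_{k,\rho }^{\mu _{t+\varepsilon }}=V_{k,\rho }^{\tau _{1}}$ alone but needs the same dichotomy argument ($-(\Delta +k^{2})u_{k,\rho }^{\tau _{1}}\geq \varepsilon \eta$ on $\omega _{k}^{\rho }(\mu _{t})$, together with Lemmas \ref{basic}(a) and \ref{meset}).
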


\begin{proof}
Let $W$ be a smoothly bounded open set such that $\overline{\Omega }\subset
W $ and $\overline{W}\subset D$, and let $u=U_{k}^{\eta }$. Then $\lambda
_{1}(W)>k^{2}$ and we can solve the Dirichlet problem for the operator $%
\Delta +k^{2}$ in $W$. The function%
\begin{equation*}
v=\left\{ 
\begin{array}{cc}
H_{k,u}^{W} & \text{in }W \\ 
u & \text{in }\mathbb{R}^{N}\backslash W%
\end{array}%
\right.
\end{equation*}%
satisfies $v\leq u$ in $W$ with strict inequality in at least one component
of $W$, and $v=U_{k}^{\eta ^{\prime }}$ for some measure $\eta ^{\prime }$
on $\partial W$. Further, Proposition \ref{mollexist} shows that, if $%
\varepsilon $ is sufficiently small, then $\mathrm{supp}\left( \mathcal{B}%
_{k}^{\rho }(\varepsilon \eta ^{\prime })\right) \subset D\backslash 
\overline{\Omega }$. It follows that 
\begin{equation*}
U_{k}^{\rho |_{\Omega }}+V_{k,\rho }^{\varepsilon \eta ^{\prime }}\in 
\mathcal{F}_{k}^{\rho }(\mu _{\varepsilon }),
\end{equation*}%
and so $I\neq \emptyset $. Further, if $t_{1}\in I$, then $(0,t_{1})\subset
I $ by Lemma \ref{mon1}. Thus $I$ is an interval. Assertion (a) is now a
consequence of Theorem \ref{mpsat1}, and it follows from the maximum
principle that $\Omega \subset \omega _{k}^{\rho }(\mu _{t})$ for all $t\in
I $ since $-(\Delta +k^{2})(U_{k}^{\mu _{t}}-V_{k,\rho }^{\mu _{t}})=t\eta $
in $\Omega $.

To prove that $I$ is bounded we note from Lemma \ref{monink} that, since $%
\rho \leq c$, 
\begin{equation*}
\omega _{0}^{c}(\mu _{t})\subset \omega _{0}^{\rho }(\mu _{t})\subset \omega
_{k}^{\rho }(\mu _{t})\text{ \ \ }(t\in I),
\end{equation*}%
which in particular shows that%
\begin{equation*}
\lambda _{1}(\omega _{k}^{\rho }(\mu _{t})\leq \lambda _{1}(\omega
_{0}^{c}(\mu _{t})).
\end{equation*}%
We also know that $\omega _{0}^{c}(\mu _{t})$ exists for all $t\in (0,\infty
)$. Further, we claim that, for any $R>0,$ the ball $B_{R}$ is contained in $%
\omega _{0}^{c}(\mu _{t})$ when $t$ is sufficiently large. To see this, we
note that, if $\eta (B_{\delta /2}(x))>0$, then there is a constant $\alpha
>0$ such that the (classical) balayage of $\alpha ^{-1}\eta |_{B_{\delta
/2}(x)}$ onto $\partial B_{\delta }(x)$ is larger than the harmonic measure $%
\nu _{0,x}^{B_{\delta }(x)^{c}}$. For sufficiently large $t$ it is
furthermore clear that $\omega _{0}^{c}(\alpha t\nu _{0,x}^{B_{\delta
}(x)^{c}})$ is a ball centred at $x$ with volume $\alpha t$ which contains $%
B_{\delta }(x)$, whence $\omega _{0}^{c}(t\eta |_{B_{\delta /2}(x)})\supset
\omega _{0}^{c}(\alpha t\nu _{0,x}^{B_{\delta }(x)^{c}})$. Thus, if we
choose $R>0$ so that $\lambda _{1}(B_{R})<k^{2}$, then $\lambda _{1}(\omega
_{0}^{c}(\mu _{t}))<\lambda _{1}(B_{R})<k^{2}$ when $t$ is sufficiently
large, and so $I$ cannot contain arbitrarily large values of $t$, by Theorem %
\ref{mpsat1}.

Part (b) is an immediate consequence of Lemma \ref{mon4}(a).

To prove (c) we use (\ref{Vest}). Since $\mu _{t}(\Omega ^{c})=0$ and $%
\Omega \subset \omega _{k}^{\rho }(\mu _{t})$, this shows that 
\begin{equation*}
V_{k,\rho }^{\mu _{t+\varepsilon }}\leq U_{k}^{\rho |_{\omega _{k}^{\rho
}(\mu _{t})}+\varepsilon \eta }.
\end{equation*}%
Since $U_{k}^{\eta }=U_{k}^{\eta ^{\omega _{k}^{\rho }(\mu _{t})^{c}}}$ on $%
\omega _{k}^{\rho }(\mu _{t})^{c}$, it follows that 
\begin{equation*}
V_{k,\rho }^{\mu _{t+\varepsilon }}\leq U_{k}^{\rho |_{\omega _{k}^{\rho
}(\mu _{t})}+\varepsilon \eta ^{\omega _{k}^{\rho }(\mu _{t})^{c}}}\text{ \
on \ }{\omega _{k}^{\rho }(\mu _{t})^{c},}
\end{equation*}%
and this inequality also holds in $\omega _{k}^{\rho }(\mu _{t})$, by the
maximum principle. Hence 
\begin{equation*}
V_{k,\rho }^{\mu _{t+\varepsilon }}\leq V_{k,\rho }^{\rho |_{\omega
_{k}^{\rho }(\mu _{t})}+\varepsilon \eta ^{\omega _{k}^{\rho }(\mu
_{t})^{c}}}.
\end{equation*}%
The reverse inequality is trivial, so the stated equality holds.
\end{proof}

\begin{remark}
We conjecture that $T$ always belongs to $I$, but have only been able to
prove this for constant $\rho $: see Remark \ref{closcons} below.
\end{remark}

If $\mu $ is a measure with compact support such that $(\mu -m)^{+}\neq 0$,
then it is reasonable to ask if there is an open set $\Omega $ which is a
quadrature domain with respect to $\mu $ for $k$-metasubharmonic functions.
In the following result, which is a slight generalization of Theorem \ref%
{qd0} to allow for non-constant weights, we assume that $\mu ((\omega
_{k}^{\rho }(\mu ))^{c})=0$. As noted in Section \ref{Intro}, we can arrange
that this holds if we replace $\mu $ by $\mu |_{\omega _{k}^{\rho }(\mu )}$,
in view of the Structure Theorem.

\begin{theorem}
\label{qd2}Suppose that $\mathcal{F}_{k}^{\rho}(\mu )\neq \emptyset $ and $%
\mu (\Omega ^{c})=0$, where $\Omega =\omega _{k}^{\rho}(\mu )$. Then%
\begin{equation*}
\int_{\Omega }s \rho \,dm \geq \int s \,d\mu \ \ \ \text{for any integrable }%
k\text{-metasubharmonic function }s\text{ on }\Omega .
\end{equation*}%
In particular, if $\mathrm{supp}(\mu) \subset \Omega $ and $\rho=1$, then $%
\Omega $ is a quadrature domain with respect to $\mu$ for $k$%
-metasubharmonic functions.
\end{theorem}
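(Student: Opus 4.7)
The plan is to recast the claimed inequality as an integration-by-parts identity in which the distributional form of the partial balayage equation is paired with the distributional inequality defining $k$-metasubharmonicity, making the conclusion manifest.

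First I would set $u := u_{k,\rho}^{\mu} = U_k^\mu - V_{k,\rho}^\mu$. By the Structure Theorem (Theorem \ref{strthm}) together with the hypothesis $\mu(\Omega^c)=0$, we have $\mathcal{B}_k^\rho(\mu) = \rho\big|_\Omega$, so that $u \ge 0$ vanishes on $\Omega^c$ and satisfies
$$
(\Delta + k^2) u \;=\; \rho\big|_\Omega - \mu
\qquad \text{in } \mathcal{D}'(\mathbb{R}^N).
$$
The desired inequality is then equivalent to showing $\langle s,(\Delta+k^2)u\rangle \ge 0$, and a formal integration by parts rewrites this as $\langle u, (\Delta+k^2)s\rangle$, which is nonnegative since $u \ge 0$ and $(\Delta+k^2)s \ge 0$ as a distribution on $\Omega$.

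The substantive step is to justify this duality rigorously when $s$ is merely integrable and $k$-metasubharmonic and when $u$ has only the regularity of a difference of $k$-potentials. I would mollify $s$ using the convolutions from (\ref{udelta}); by the subharmonic analogue of Theorem \ref{molli}, $s^\delta$ is then $C^2$ and $k$-metasubharmonic on $\Omega^\delta$, and $s^\delta \searrow s$ as $\delta \searrow 0$. Since $u \in L^1_{\mathrm{loc}}$ vanishes on $\Omega^c$, applying the distributional pairing against the smooth compactly supported approximants of $u$ (e.g.\ $u \chi_{\Omega^\delta}$ smoothed) and using genuine integration by parts with no boundary contributions gives
$$
\int_\Omega s^\delta \rho\, dm \;-\; \int s^\delta\, d\mu \;=\; \int u\,(\Delta+k^2) s^\delta\, dm \;\ge\; 0.
$$
Passing to the limit $\delta \searrow 0$ then produces the required inequality.

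The main obstacle is justifying the passage to the limit. On the left, $\int_\Omega s^\delta \rho\, dm \to \int_\Omega s\rho\, dm$ by the $L^1$ hypothesis on $s$ together with monotone convergence (using $s^\delta \searrow s$ and $\rho$ bounded). For $\int s^\delta\, d\mu$, one uses that $s$ has an upper semicontinuous representative and that $\mu$ is concentrated in $\overline{\Omega}$, so $s^\delta \to s$ $\mu$-almost everywhere and $s^\delta$ is controlled from above by an $L^1(\mu)$ majorant (obtainable from the mean-value comparison in Proposition \ref{MVP} applied to the usc representative). The right-hand side is handled by the same device, noting $u$ is bounded on compact subsets of $\Omega$ and $(\Delta+k^2)s^\delta \, dm$ converges weakly to $(\Delta+k^2)s$. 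The special case ($\rho=1$, $\mathrm{supp}(\mu)\subset \Omega$) then drops out automatically, recovering Theorem \ref{qd0}.
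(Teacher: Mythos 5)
Your reduction of the quadrature inequality to $\langle u,(\Delta+k^2)s\rangle\geq 0$ with $u=U_k^\mu-V_{k,\rho}^\mu\geq 0$ and $(\Delta+k^2)u=\rho|_\Omega-\mu$ is the right formal picture, but the step you label as "the substantive step" is exactly where the proof breaks down, and the repair you sketch does not work. The measure $(\Delta+k^2)u=\rho|_\Omega-\mu$ charges all of $\Omega$ up to $\partial\Omega$ (and $\mu$ may charge any neighbourhood of $\partial\Omega$, since the hypothesis is only $\mu(\Omega^c)=0$, not $\mathrm{supp}(\mu)\subset\subset\Omega$), whereas $s^\delta$ exists only on $\Omega^\delta$. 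So the pairing $\langle (\Delta+k^2)u, s^\delta\rangle$ is not defined, and neither side of your displayed identity makes literal sense. If instead you truncate --- replacing $u$ by a smoothing of $u\chi_{\Omega^\delta}$, as you suggest --- then $(\Delta+k^2)$ of the truncated function is no longer $\rho|_{\Omega^\delta}-\mu|_{\Omega^\delta}$: the cutoff creates singular layer terms on $\partial\Omega^\delta$, a surface \emph{interior} to $\Omega$ where $u$ and $\nabla u$ have no reason to vanish and where $s^\delta$ and $\nabla s^\delta$ have no sign. The claim of "genuine integration by parts with no boundary contributions" is therefore unjustified, and this is not a technicality: controlling the behaviour of the Riesz measure of a merely integrable $k$-metasubharmonic $s$ near $\partial\Omega$ is the whole difficulty of the theorem. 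Two smaller errors compound this: $u$ is \emph{not} bounded on compact subsets of $\Omega$ in general ($U_k^\mu$ is infinite at atoms of $\mu$), and the "mean-value $L^1(\mu)$ majorant" for $s^\delta$ from Proposition \ref{MVP} degenerates as the admissible radius shrinks to $0$ near $\partial\Omega$, so it gives no uniform control where $\mu$ may live.

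The paper avoids all of this by quoting Proposition 7.5 of \cite{KLSS}: positive combinations of $-\Psi_k(x-\cdot)$ for $x$ inside and $\pm\partial^\beta\Psi_k(x-\cdot)$ ($|\beta|\leq 1$) for $x$ outside are $L^1$-dense in the integrable $k$-metasubharmonic functions, and for each such generator the quadrature inequality is read off directly from $V_{k,\rho}^\mu\leq U_k^\mu$ together with equality (of the functions and their first derivatives) off $\omega_k^\rho(\mu)$. That density theorem is precisely the analytic substitute for the integration by parts you are attempting; your argument as written would need to be supplemented either by that result or by an equally nontrivial exhaustion argument controlling the boundary terms, so as it stands there is a genuine gap. (Your computation does become valid, essentially verbatim, under the stronger hypothesis that $s$ is $k$-metasubharmonic on a neighbourhood of $\overline{\Omega}$, but that is a strictly weaker theorem.)
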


Theorem \ref{qd2} follows immediately from the following result, which is
proved in Proposition 7.5 of \cite{KLSS}. As noted in Section \ref{Intro},
we now know a much wider range of circumstances in which the hypothesis that 
$\mathcal{F}_{k}^{\rho }(\mu )\neq \emptyset $ holds.

\begin{proposition}
Suppose that $\overline{\omega }\subset \Omega \subset \mathbb{R}^{N}$,
where $\omega $ and $\Omega $ are open sets and $\omega $ is bounded. Then
the linear span, with positive coefficients, of the set%
\begin{equation*}
\left\{ \pm \partial ^{\beta }\Psi _{k}(x-\cdot )|_{\omega }:x\in \Omega
\backslash \omega ,\left\vert \beta \right\vert \leq 1\right\} \cup \left\{
-\Psi _{k}(x-\cdot )|_{\omega }:x\in \omega \right\}
\end{equation*}%
is dense with respect to the $L^{1}(\omega )$-topology in the space%
\begin{equation*}
\left\{ w\in L^{1}(\omega ):(\Delta +k^{2})w\geq 0\text{ \ in \ }\omega
\right\} .
\end{equation*}
\end{proposition}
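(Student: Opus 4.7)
The plan is to establish the density by Hahn--Banach duality. Write $C$ for the positive linear span in the statement, viewed inside $L^{1}(\omega)$, and let $M=\{w\in L^{1}(\omega):(\Delta+k^{2})w\ge 0\}$ be the target cone. Each generator lies in $M$: for $x\in\Omega\setminus\omega$ the functions $\pm\partial^{\beta}\Psi_{k}(x-\cdot)|_{\omega}$ are real-analytic and $k$-metaharmonic on $\omega$, while for $x\in\omega$ one has $(\Delta+k^{2})(-\Psi_{k}(x-\cdot))=\delta_{x}\ge 0$. Since $M$ is closed in $L^{1}(\omega)$, also $\overline{C}\subset M$. If density failed, the Hahn--Banach theorem for closed convex cones would supply some $\phi\in L^{\infty}(\omega)$ and some $w_{0}\in M$ with $\int_{\omega}\phi f\,dm\le 0$ for every $f\in C$ and $\int_{\omega}\phi w_{0}\,dm>0$; I aim to rule this out.

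Extending $\phi$ by zero and setting $U_{k}^{\phi}=\Psi_{k}\ast\phi$, the separating inequality, tested on the generators and differentiated under the integral, translates into
\begin{equation*}
\partial^{\beta}U_{k}^{\phi}(x)=0\quad(x\in\Omega\setminus\omega,\ |\beta|\le 1),\qquad U_{k}^{\phi}\ge 0\text{ on }\omega.
\end{equation*}
Since $\phi\chi_{\omega}\in L^{\infty}_{c}(\mathbb{R}^{N})$, elliptic regularity gives $U_{k}^{\phi}\in W^{2,p}_{\mathrm{loc}}\cap C^{1,\alpha}_{\mathrm{loc}}$, so the boundary vanishing is classical: $U_{k}^{\phi}$ and $\nabla U_{k}^{\phi}$ are continuous and vanish on $\partial\omega\subset\Omega\setminus\omega$. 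On $\mathbb{R}^{N}\setminus\overline{\omega}$ the potential $U_{k}^{\phi}$ is real-analytic and $k$-metaharmonic, and it vanishes on the open set $\Omega\setminus\overline{\omega}$. Every component $V$ of $\mathbb{R}^{N}\setminus\overline{\omega}$ satisfies $\partial V\subset\partial\omega\subset\Omega$, so $V\cap(\Omega\setminus\overline{\omega})\neq\emptyset$; unique continuation therefore forces $U_{k}^{\phi}\equiv 0$ throughout $\mathbb{R}^{N}\setminus\overline{\omega}$. Hence $\tilde{U}:=U_{k}^{\phi}$ is of class $C^{1}$ on $\mathbb{R}^{N}$, is supported in $\overline{\omega}$, satisfies $-(\Delta+k^{2})\tilde{U}=\phi\chi_{\omega}$ distributionally, and is nonnegative on $\omega$.

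The end-game is the Green-type identity
\begin{equation*}
\int_{\omega}\phi w\,dm=-\int_{\omega}w\,(\Delta+k^{2})\tilde{U}\,dm=-\int_{\omega}\tilde{U}\,d[(\Delta+k^{2})w],
\end{equation*}
whose right-hand side is nonpositive because $\tilde{U}\ge 0$ on $\omega$ and $(\Delta+k^{2})w$ is a nonnegative Radon measure there (every positive distribution is automatically a Radon measure). Applying this to $w=w_{0}$ contradicts $\int_{\omega}\phi w_{0}\,dm>0$ and closes the argument.

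The principal obstacle is the rigorous justification of this integration by parts when $w$ is only $L^{1}$ and $\omega$ may be irregular. I would handle it by mollification: on $\omega_{\varepsilon}:=\{x\in\omega:\mathrm{dist}(x,\omega^{c})>\varepsilon\}$ the regularization $w_{\varepsilon}=w\ast\rho_{\varepsilon}$ is smooth and $k$-metasubharmonic, so classical Green's formula applies when testing against a cutoff $\chi_{\delta}\tilde{U}$ compactly supported in $\omega$. The commutator error $\tilde{U}\Delta\chi_{\delta}+2\nabla\chi_{\delta}\cdot\nabla\tilde{U}$ is supported in a thin shell near $\partial\omega$; on this shell $\tilde{U}$ and $\nabla\tilde{U}$ decay quantitatively by virtue of their vanishing on $\partial\omega$ and the $C^{1,\alpha}$-regularity of $\tilde{U}$, while $\|w\|_{L^{1}(\omega)}<\infty$ controls the pairing on the other side. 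Sending $\delta,\varepsilon\to 0$ (with $\delta$ chosen depending on $\varepsilon$) eliminates the error and delivers the identity, and hence the contradiction.
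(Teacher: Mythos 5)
First, a point of reference: the paper does not prove this proposition itself; it is quoted from Proposition 7.5 of \cite{KLSS}. So there is no in-paper argument to compare against, but your Hahn--Banach duality scheme (separating functional $\phi\in L^{\infty}(\omega)$, the potential $U_{k}^{\phi}$ with vanishing $1$-jet on $\Omega\setminus\omega$ and nonnegativity on $\omega$, unique continuation by real-analyticity, then a Green identity against $(\Delta+k^{2})w$) is the standard and essentially correct architecture for results of this type, and the duality, regularity and unique-continuation steps are all sound as you state them.

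The genuine gap is in the last step, precisely where you locate the ``principal obstacle''. Your error analysis for the commutator term does not close. From $C^{1,\gamma}$-regularity ($\gamma<1$) and the vanishing of $\tilde{U}$ and $\nabla\tilde{U}$ on $\partial\omega$ you only get $\tilde{U}=O(d^{1+\gamma})$ and $|\nabla\tilde{U}|=O(d^{\gamma})$ on the shell $\{\delta<d<2\delta\}$, where $d=\mathrm{dist}(\cdot,\partial\omega)$. Against $|\Delta\chi_{\delta}|\lesssim\delta^{-2}$ and $|\nabla\chi_{\delta}|\lesssim\delta^{-1}$ this gives $|\tilde{U}\Delta\chi_{\delta}+2\nabla\chi_{\delta}\cdot\nabla\tilde{U}|\lesssim\delta^{\gamma-1}\to\infty$, and since $w$ is merely $L^{1}$ its mass on the shell may decay arbitrarily slowly (e.g.\ like $1/\log(1/\delta)$), so the product need not tend to $0$. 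One cannot take $\gamma=1$, because $\Delta\tilde{U}\in L^{\infty}$ gives $W^{2,p}$ for all finite $p$ but not $C^{1,1}$. The fix is to invoke the positivity of $\tilde{U}$, which you have but do not use at this point: since $\tilde{U}\geq 0$ everywhere, $\tilde{U}(x_{0})=0$ for $x_{0}\in\partial\omega$, and $\|\Delta\tilde{U}\|_{\infty}\leq k^{2}\|\tilde{U}\|_{\infty}+\|\phi\|_{\infty}=:M$, subtracting the Newtonian potential of $(\Delta\tilde{U})\chi_{B_{r}(x_{0})}$ and applying Harnack to the resulting harmonic remainder yields $\sup_{B_{r/2}(x_{0})}\tilde{U}\leq CMr^{2}$, hence $\tilde{U}=O(d^{2})$ near $\partial\omega$, and interior gradient estimates then give $|\nabla\tilde{U}|=O(d)$. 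With these quadratic/linear rates the commutator error is $O(1)$ pointwise on the shell and is controlled by $\int_{\{\delta<d<2\delta\}}|w|\,dm\to 0$, so your cutoff-and-mollify scheme goes through. (Equivalently one can replace the cutoff by Sakai-type truncations $(\tilde{U}-\epsilon)^{+}$, which are compactly supported in $\omega$ by the same positivity.) Without some such use of $\tilde{U}\geq 0$, the integration by parts as you justify it fails.
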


\bigskip

We will now note some differences between quadrature domains for $k$%
-metasubharmonic functions ($k>0$) and those for subharmonic functions (the
case where $k=0$). In both cases a given measure $\mu $ with compact support
in $\mathbb{R}^{N}$ must, in some sense, be sufficiently concentrated to
guarantee the existence of a quadrature domain for $\mu $. However, unlike
the case $k=0$, where this is the only restriction, when $k>0$ the measure
must also not have too much mass concentrated locally. We saw this for the
case of a point mass in Proposition \ref{pmb}, and will see it for the case
of a uniformly distributed mass on a ball in Proposition \ref{ball}.

Another fundamental difference between the two cases concerns the issue of
uniqueness. When $k=0$ it is well known that any quadrature domain $\Omega $
for subharmonic functions with respect to $\mu $ must satisfy 
\begin{equation*}
\omega _{0}^{1}(\mu )\subset \Omega \subset \Omega _{0}^{1}(\mu ),
\end{equation*}%
and the sets $\Omega _{0}^{1}(\mu )$, $\omega _{0}^{1}(\mu )$ then differ by
at most a set of Lebesgue measure zero. Thus $\Omega $ is unique up to a set
of Lebesgue measure zero. When $k>0$ this kind of uniqueness fails. One way
to see this is through the existence of bounded null quadrature domains for $%
k$-metasubharmonic functions. (These do not exist when $k=0$, because they
would contradict the maximum principle.) For example, we know from
Proposition \ref{MVP} that $U_{k}^{m|_{B_{R_{k}}(y)}}$ is strictly positive
on $B_{R_{k}}(y)$ and zero elsewhere. Thus, if $\Omega $ is a quadrature
domain for $k$-metasubharmonic functions, then so also is $\Omega \cup
B_{R_{k}}(y)$ whenever $\Omega \cap B_{R_{k}}(y)=\emptyset $.

If there is a quadrature domain $\Omega $ for $k$-metasubharmonic functions
with respect to $\mu $, then by definition $U_{k}^{m|_{\Omega }}\in \mathcal{%
F}_{k}^{1}(\mu )$. Hence $\omega _{k}^{1}(\mu )$ exists and is a subset of $%
\Omega $. If $\mathrm{supp}(\mu )\subset \omega _{k}^{1}(\mu )$, then $%
m(\Omega _{k}^{1}(\mu )\setminus \omega _{k}^{1}(\mu ))=0$ and the
quadrature domains $\Omega $ for $k$-metasubharmonic functions with respect
to $\mu $ such that $\omega _{k}^{1}(\mu )\subset \Omega \subset \Omega
_{k}^{1}(\mu )$ are precisely those for which $\lambda _{1}(\Omega )\geq
k^{2}$, by Lemma \ref{unique}.

\section{Partial balayage when $\protect\rho $ is constant\label{constantrho}%
}

In the case where $\rho $ is constant we will now determine the $k$-partial
balayage of some spherically symmetric measures. We will also discuss the
geometric properties of $\omega _{k}^{1}(\mu )$ for more general measures $%
\mu $.

We begin by considering the $k$-partial balayage of a point mass.

\begin{proposition}
\label{pmb} Let $c>0$. \newline
(a) Then $\mathcal{F}_{k}^{1}(c\delta _{x})\neq \emptyset $ if and only if $%
0<c\leq c_{k}(R_{k})$.\newline
(b) If $0<c\leq c_{k}(R_{k})$, then $\mathcal{B}_{k}^{1}(c\delta
_{x})=m|_{B_{r}(x)}$, where $r$ is the unique value in $(0,R_{k}]$ such that 
$c=c_{k}(r)$.
\end{proposition}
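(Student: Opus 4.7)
The plan is to treat (a) and (b) together via the rotational symmetry of the problem. For the ``if'' direction of (a), I first note that $c_k$ is strictly increasing on $(0,R_k]$, since $c_k'=d_k>0$ there by Lemma~\ref{Bessel}(i) and (\ref{ckdk}), so it maps $(0,R_k]$ bijectively onto $(0,c_k(R_k)]$. Given $0<c\le c_k(R_k)$, I pick the corresponding $r\in(0,R_k]$ with $c_k(r)=c$; since $(c\delta_x)(B_r(y))\le c=c_k(r)$ for every $y\in\mathbb{R}^N$, Proposition~\ref{mollexist} immediately supplies $\mathcal{F}_k^1(c\delta_x)\ne\emptyset$.

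For the converse direction of (a) and for (b), I assume $\mathcal{F}_k^1(c\delta_x)\ne\emptyset$ and aim to identify $\omega_k^1(c\delta_x)$. Because $U_k^{c\delta_x}$ is radial about $x$ and $V_{k,1}^{c\delta_x}$ is the largest element of a rotation-invariant class, $V_{k,1}^{c\delta_x}$ is also radial about $x$, hence so is $\omega_k^1(c\delta_x)$. The set is nonempty, since $U_k^{c\delta_x}$ itself is ruled out of $\mathcal{F}_k^1(c\delta_x)$ by $-(\Delta+k^2)U_k^{c\delta_x}=c\delta_x\not\le 1$. Moreover $x\in\omega_k^1(c\delta_x)$: otherwise the Structure Theorem (Theorem~\ref{strthm}) would place the atom $c\delta_x|_{\{x\}}$ into $\mathcal{B}_k^1(c\delta_x)$, contradicting $\mathcal{B}_k^1(c\delta_x)\le m$. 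Finally, Lemma~\ref{meset} forces every component of $\omega_k^1(c\delta_x)$ to carry positive $c\delta_x$-mass, hence to contain $x$, leaving exactly one component. Being connected, rotationally symmetric about $x$, and containing $x$, this component is an open ball $B_r(x)$ for some $r>0$.

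Theorem~\ref{mpsat1} now yields $k^2\le\lambda_1(B_r(x))=(j_{\alpha,1}/r)^2$, so $r\le R_k$, and the Structure Theorem gives $\mathcal{B}_k^1(c\delta_x)=m|_{B_r(x)}$, so that $V_{k,1}^{c\delta_x}=U_k^{m|_{B_r(x)}}$. Matching this with $V_{k,1}^{c\delta_x}=U_k^{c\delta_x}$ on $B_r(x)^c$ via the explicit formulas $c\Psi_k(\cdot-x)$ and the second case of (\ref{C2}), the common factor $|y-x|^{-\alpha}Y_\alpha(k|y-x|)$ cancels to leave $cb=-\pi r^{N/2}J_{N/2}(kr)/(2k)$. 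Substituting $b=-k^\alpha/(2^{\alpha+2}\pi^\alpha)$ and using $\alpha+1=N/2$ simplifies this to $c=c_k(r)$. Strict monotonicity of $c_k$ on $(0,R_k]$ then determines $r$ uniquely from $c$ and gives $c\le c_k(R_k)$, completing both assertions.

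The main obstacle I anticipate is the structural step of showing that $\omega_k^1(c\delta_x)$ is exactly one ball $B_r(x)$: rotational symmetry alone gives only a union of concentric shells, and one must combine the atomic-mass argument (forcing $x\in\omega_k^1(c\delta_x)$) with Lemma~\ref{meset} (ruling out annular components that carry no mass of $c\delta_x$) to pin the set down. Once this is achieved, the remainder reduces to Bessel-function bookkeeping.
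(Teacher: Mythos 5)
Your argument is correct, and for the ``only if'' direction of (a) and for (b) it takes a genuinely different route from the paper. The paper proceeds constructively: the mollification in Proposition \ref{mollexist} already produces the explicit candidate $U_k^{m|_{B_r(x)}}$, and the authors then verify that it equals $V_{k,1}^{c\delta_x}$ — directly via the $k$-maximum principle (in effect Lemma \ref{unique}) when $c<c_k(R_k)$, and, in the borderline case $c=c_k(R_k)$ where the maximum principle on $B_{R_k}(x)$ fails, via a separate dichotomy argument: the nonnegative $k$-metaharmonic difference $V_{k,1}^{c\delta_x}-U_k^{m|_{B_{R_k}(x)}}$ is either $0$ or a first eigenfunction, and the latter is excluded by the Hopf lemma (Lemma \ref{basic}(c)) because $U_k^{c\delta_x}-U_k^{m|_{B_{R_k}(x)}}$ has vanishing normal derivative on $\partial B_{R_k}(x)$. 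The ``only if'' part is then deduced from Theorem \ref{mpsat1}(i), since $\omega_k^1(c_k(R_k)\delta_x)=B_{R_k}(x)$ violates the $k$-maximum principle. You instead reverse-engineer the answer: rotational invariance of $\mathcal{F}_k^1(c\delta_x)$ makes $V_{k,1}^{c\delta_x}$ radial, the Structure Theorem forces $x\in\omega_k^1(c\delta_x)$ (an atom cannot survive into $\mathcal{B}_k^1\le m$), Lemma \ref{meset} kills any annular components, Theorem \ref{mpsat1}(ii) gives $r\le R_k$, and matching the exterior formulas (\ref{C1})--(\ref{C2}) yields $c=c_k(r)$. This buys a uniform treatment of all $c\le c_k(R_k)$ — no separate eigenfunction/Hopf argument at the endpoint — and delivers the ``only if'' statement as an immediate corollary; the price is reliance on the slightly heavier structural machinery (Theorem \ref{mpsat1}(ii)) where the paper gets by with the maximum principle for $c<c_k(R_k)$. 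Two small points worth making explicit in a write-up: the cancellation of the factor $|y-x|^{-\alpha}Y_\alpha(k|y-x|)$ is legitimate because $Y_\alpha$ has only isolated zeros and the identity holds on all of $\{|y-x|>r\}$; and the step ``connected, rotationally symmetric, open, containing the centre $\Rightarrow$ a ball'' uses $N\ge 2$ (spheres are connected), which is the standing assumption of the paper.
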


\begin{proof}
The \textquotedblleft if\textquotedblright\ assertion in part (a) follows
from Proposition \ref{mollexist}.

We next prove part (b). If $0<c<c_{k}(R_{k})$, then $r<R_{k}$ and the $k$%
-maximum principle holds in $B_{r}(x)$. It follows from the preceding
paragraph that $\omega _{k}^{1}(c\delta _{x})=B_{r}(x)$, and so $\mathcal{B}%
_{k}^{1}(c\delta _{x})=m|_{B_{r}(x)}$. If $c=c_{k}(R_{k})$, then $r=R_{k}$
and the $k$-maximum principle fails to hold in $B_{r}(x)$. Since $%
B_{R_{k}}(x)\subset \omega _{k}^{1}(c\delta _{x})$ by (\ref{monb}), the
function $u:=V_{k,1}^{c\delta _{x}}-U_{k}^{m|_{B_{R_{k}}(x)}}$ is
nonnegative and $k$-metaharmonic on $B_{r}(x)$. It follows from Proposition %
\ref{MP} that either $u\equiv 0$ there or $u$ is a positive eigenfunction
for $B_{R_{k}}(x)$. The latter is impossible, by Lemma \ref{basic}(c), since 
$0\leq u\leq U_{k}^{c\delta _{x}}-U_{k}^{m|_{B_{R_{k}}(x)}}$ on\ $%
B_{R_{k}}(x)$ and the function $U_{k}^{c\delta
_{x}}-U_{k}^{m|_{B_{R_{k}}(x)}}$ has zero normal derivative at $\partial
B_{R_{k}}(x)$.

The \textquotedblleft only if\textquotedblright\ assertion in part (a) now
follows from Theorem \ref{mpsat1}(i), because $\omega
_{k}^{1}(c_{k}(R_{k})\delta _{x})=B_{R_{k}}(x)$ and this set fails to
satisfy the $k$-maximum principle.
\end{proof}

\bigskip

Next, we consider the $k$-partial balayage of a multiple of Lebesgue measure
on a ball, which we may assume to be centred at $0$. We note that, if $\mu
=cm|_{B_{R}}$, where $c\leq 1$, then clearly $\mathcal{B}_{k}^{1}(\mu )=\mu $%
, so the only interesting case is where $c>1$.

\begin{proposition}
\label{ball}Let $\mu =cm|_{B_{R}}$, where $c>1$. \newline
(a) Then $\mathcal{F}_{k}^{1}(\mu )\neq \emptyset $ if and only if $R<R_{k}$
and $c\leq c_{k}(R_{k})/c_{k}(R)$.\newline
(b) If $R<R_{k}$ and $c\leq c_{k}(R_{k})/c_{k}(R)$, then $\mathcal{B}%
_{k}^{1}(\mu )=m|_{B_{r}}$, where $r$ is the unique value in $(R,R_{k}]$
such that $c_{k}(r)=cc_{k}(R)$.
\end{proposition}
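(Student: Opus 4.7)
The strategy is to mirror the proof of Proposition \ref{pmb} by identifying $V_{k,1}^{\mu}$ with $U_{k}^{m|_{B_{r}}}$ via Lemma \ref{unique}. The radial symmetry of $\mu$ and of $\rho \equiv 1$ forces $V_{k,1}^{\mu}$ to be radial (from the uniqueness of the largest element of $\mathcal{F}_{k}^{1}(\mu)$, since rotating any $v \in \mathcal{F}_{k}^{1}(\mu)$ keeps it in $\mathcal{F}_{k}^{1}(\mu)$), so $\omega_{k}^{1}(\mu)$ is rotationally symmetric; the plan argues below that it in fact reduces to a single ball $B_{r}$ centred at $0$. Matching $V_{k,1}^{\mu}$ with $U_{k}^{\mu}$ outside $B_{r}$ via the exterior branch of (\ref{C2}), both sides being scalar multiples of $|x|^{-\alpha} Y_{\alpha}(k|x|)$, then forces $c_{k}(r) = c \cdot c_{k}(R)$. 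Since $c_{k}' = d_{k} > 0$ on $(0, R_{k})$ by (\ref{dk}), $c_{k}$ is a strictly increasing bijection $[0, R_{k}] \to [0, c_{k}(R_{k})]$, so such an $r \in (R, R_{k}]$ exists and is unique iff $c \cdot c_{k}(R) \leq c_{k}(R_{k})$.

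\textbf{Sufficiency.} For the ``if'' direction of (a) and for (b), assume $R < R_{k}$ and $c \leq c_{k}(R_{k})/c_{k}(R)$. The plan is to apply Theorem \ref{HS2} with $\Omega = B_{R}$, $\eta = m|_{B_{R}}$, and $D = B_{R_{k}}$ (the hypotheses hold because $\overline{B_{R}} \subset B_{R_{k}}$ and $\lambda_{1}(B_{R_{k}}) = k^{2}$). Then $\mu_{t} = (1+t)m|_{B_{R}}$ realizes $\mu$ with $c = 1+t$, and for every $t \in I = (0, T]$ (closed at $T$ by Remark \ref{closcons}, since $\rho$ is constant), Theorem \ref{HS2} gives $B_{R} \subset \omega_{k}^{1}(\mu_{t})$. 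Radial symmetry forces $V_{k,1}^{\mu_{t}}$ to be radial, and hence $\omega_{k}^{1}(\mu_{t})$ to be rotationally invariant; the component of $\omega_{k}^{1}(\mu_{t})$ containing $0$ is therefore a ball, and Lemma \ref{meset} rules out further components, which would be disjoint from $B_{R}$ and so carry no $\mu_{t}$-mass. Thus $\omega_{k}^{1}(\mu_{t}) = B_{r(t)}$ for some $r(t)$, the Structure Theorem gives $\mathcal{B}_{k}^{1}(\mu_{t}) = m|_{B_{r(t)}}$, the exterior matching described above gives $c_{k}(r(t)) = (1+t) c_{k}(R)$, and Theorem \ref{HS2}(a) gives $r(t) \leq R_{k}$. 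Consequently $T = c_{k}(R_{k})/c_{k}(R) - 1$, establishing both the ``if'' part of (a) and all of (b).

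\textbf{Necessity and main obstacle.} For the ``only if'' direction, suppose $\mathcal{F}_{k}^{1}(\mu) \neq \emptyset$ with $c > 1$. Then $U_{k}^{\mu} \notin \mathcal{F}_{k}^{1}(\mu)$ (since $-(\Delta+k^{2})U_{k}^{\mu} = c > 1$ on $B_{R}$), so $\omega_{k}^{1}(\mu) \neq \emptyset$, and the bound $\mathcal{B}_{k}^{1}(\mu) \leq m$ from the Structure Theorem forces $m(B_{R} \setminus \omega_{k}^{1}(\mu)) = 0$. The same radial-symmetry plus Lemma \ref{meset} reduction yields $\omega_{k}^{1}(\mu) = B_{r'}$ with $R \leq r'$; Theorem \ref{mpsat1}(ii) gives $r' \leq R_{k}$, hence $R \leq R_{k}$; and the exterior matching gives $c_{k}(r') = c \cdot c_{k}(R) \leq c_{k}(R_{k})$, i.e., $c \leq c_{k}(R_{k})/c_{k}(R)$. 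The case $R = R_{k}$ is excluded because it would force $r' = R_{k}$ and so $c = 1$, contradicting $c > 1$. The principal technical hurdle is the reduction of $\omega_{k}^{1}(\mu)$ to a single ball: this combines the radial symmetry of the usc-representative $V_{k,1}^{\mu}$ with Lemma \ref{meset} to forbid mass-free annular components. Invoking Theorem \ref{HS2} in the sufficiency step also sidesteps the otherwise-delicate task of verifying, via Lemma \ref{unique}, that $u = c U_{k}^{m|_{B_{R}}} - U_{k}^{m|_{B_{r}}}$ is strictly positive on $B_{r}$; a direct attack is possible using the explicit formulas (\ref{C2}) and the Wronskian identities in Lemma \ref{Bessel}, but is technical.
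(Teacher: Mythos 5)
Your necessity argument and the reduction of $\omega_{k}^{1}(\mu)$ to a single ball (radial symmetry of $V_{k,1}^{\mu}$, Lemma \ref{meset} to kill extra components, exterior matching of the coefficient of $|x|^{-\alpha}Y_{\alpha}(k|x|)$ to get $c_{k}(r)=c\,c_{k}(R)$, then Theorem \ref{mpsat1}) are sound and essentially parallel to the paper's. But there is a genuine gap in the sufficiency direction. Running Theorem \ref{HS2} with $\mu_{t}=(1+t)m|_{B_{R}}$, you show that for every $t\in I$ one has $\omega_{k}^{1}(\mu_{t})=B_{r(t)}$ with $c_{k}(r(t))=(1+t)c_{k}(R)$ and $r(t)\leq R_{k}$; this only yields $T\leq c_{k}(R_{k})/c_{k}(R)-1$. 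The assertion $T=c_{k}(R_{k})/c_{k}(R)-1$ requires the reverse inequality, i.e.\ that $\mathcal{F}_{k}^{1}((1+t)m|_{B_{R}})\neq\emptyset$ for \emph{every} $t\leq c_{k}(R_{k})/c_{k}(R)-1$ --- and that is exactly the existence statement (the ``if'' part of (a), on which (b) also depends) that you are trying to prove. Nothing in Theorem \ref{HS2} or Remark \ref{closcons} lets you continue the flow past $T$ merely because $r(T)<R_{k}$; the Hele-Shaw law (c) and Proposition \ref{stbal} are both conditional on nonemptiness of $\mathcal{F}_{k}^{1}(\mu_{t+\varepsilon})$, so they cannot supply it.

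The paper closes this gap with a short direct construction that your proposal dismisses as ``technical'' but which is actually the crux: since $\Psi_{k}(\cdot-z)$ is $k$-metasuperharmonic, the monotonicity in Proposition \ref{MVP} of $r\mapsto c_{k}(r)^{-1}\int_{B_{r}}w\,dm$ (valid up to $r=R_{k}$) gives, for $R<r\leq R_{k}$ with $c_{k}(r)=c\,c_{k}(R)$,
\begin{equation*}
U_{k}^{m|_{B_{r}}}(z)\leq \frac{c_{k}(r)}{c_{k}(R)}\,U_{k}^{m|_{B_{R}}}(z)=U_{k}^{\mu}(z),
\end{equation*}
with equality off $B_{r}$ by the mean value property. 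Hence $U_{k}^{m|_{B_{r}}}\in\mathcal{F}_{k}^{1}(\mu)$, no Wronskian computations or positivity verification for Lemma \ref{unique} needed. You should insert this step (after which your identification of $\mathcal{B}_{k}^{1}(\mu)$ via the Structure Theorem, and the eigenfunction/Hopf argument from Proposition \ref{pmb}(b) for the borderline case $r=R_{k}$, go through). Two minor points: Theorem \ref{HS2}(a) gives $r(t)<R_{k}$ only for $t<T$, so at $t=T$ you need Theorem \ref{mpsat1}(ii) for $r(T)\leq R_{k}$; and in the necessity step the inclusion $B_{R}\subset\omega_{k}^{1}(\mu)$ holds only up to an $m$-null set, which is harmless for identifying the measure $\mathcal{B}_{k}^{1}(\mu)$ but worth saying.
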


\begin{proof}
Suppose firstly that $R<R_{k}$ and $c\leq c_{k}(R_{k})/c_{k}(R)$. Since $%
c_{k}(\cdot )$ is strictly increasing on $(0,R_{k}]$, there is a unique $r$
in $(R,R_{k}]$ such that $c_{k}(r)=cc_{k}(R)$. By Proposition \ref{MVP}, 
\begin{eqnarray}
U_{k}^{m|_{B_{r}}}(z) &=&\int_{B_{r}}\Psi _{k}(y-z)dm(y)  \notag \\
&\leq &\frac{c_{k}(r)}{c_{k}(R)}\int_{B_{R}}\Psi _{k}(y-z)dm(y)=\frac{%
c_{k}(r)}{cc_{k}(R)}U_{k}^{cm|_{B_{R}(x)}}(z),  \label{ballineq}
\end{eqnarray}%
with equality outside $B_{r}(x)$, so $\mathcal{F}_{k}^{1}(\mu )\neq
\emptyset $. This proves the \textquotedblleft if\textquotedblright\
assertion in part (a).

We next prove part (b). If $R<R_{k}$ and $c<c_{k}(R_{k})/c_{k}(R)$, then $%
r<R_{k}$ and the $k$-maximum principle holds in $B_{r}$. It follows that $%
\omega _{k}^{1}(\mu )=B_{r}$, and so $\mathcal{B}_{k}^{1}(\mu )=m|_{B_{r}}$.
If $c=c_{k}(R_{k})/c_{k}(R)$, then $r=R_{k}$ and we can follow the argument
we used in the proof of Proposition \ref{pmb}(b).

It remains to prove the \textquotedblleft only if\textquotedblright\
assertion in part (a). By rotational symmetry and Lemma \ref{meset}, $%
\mathcal{B}_{k}^{1}(\mu )$ must be of the form $m|_{B_{r}}$ for some $r>R$.
Theorem \ref{mpsat1}(ii) tells us that $r\leq R_{k}$ and part (i) of the
same result then shows that $R<R_{k}$. Further, since equality holds in (\ref%
{ballineq}) outside $B_{r}$, we conclude that $c=c_{k}(r)/c_{k}(R)\leq
c_{k}(R_{k})/c_{k}(R)$.
\end{proof}

\bigskip

Propositions \ref{pmb} and \ref{ball} show how, when $r\leq R_{k}$, the ball 
$B_{r}$ can arise from $k$-partial balayage of a point mass, or of $%
cm|_{B_{R}}$ where $c>1$ and $R<r$. The $k$-partial balayage onto $m$ of a
multiple of surface area measure on a sphere is less straightforward. We
will see below when it is a ball and when it is an annular set of the form%
\begin{equation*}
A(r,R)=B_{R}\setminus \overline{B_{r}}\text{ \ \ \ }(0<r<R).
\end{equation*}%
For each $T>0$ we define%
\begin{equation*}
f_{T}(\xi )=\xi ^{N/2}\left\{ J_{N/2}(k\xi )Y_{\alpha }(kT)-Y_{N/2}(k\xi
)J_{\alpha }(kT)\right\} \text{ \ \ \ }(\xi >0),
\end{equation*}%
and define $f_{T}$ at $0$ by its limit there using Lemma \ref{Bessel}(vi).
Next, for each $\xi >0$, we define 
\begin{equation*}
w_{\xi }(r)=\dfrac{1}{k^{2}}\left\{ 1-\dfrac{\pi k}{2}r^{-\alpha }f_{r}(\xi
)\right\} \text{ \ \ \ }(r>0).
\end{equation*}

\begin{lemma}
Let $T>0$ and $\xi >0$. Then $f_{T}$ has a local maximum at $T$, where $%
f_{T}(T)=2T^{\alpha }/(\pi k)$. Also, $w_{\xi }(\xi )=0$ and $w_{\xi
}^{\prime }(\xi )=0$, and 
\begin{equation}
(\Delta +k^{2})w_{\xi }(\left\vert x\right\vert )=1\text{ \ and \ }(\Delta
+k^{2})\frac{f_{T}^{\prime }(\left\vert x\right\vert )}{\left\vert
x\right\vert ^{\alpha +N/2}}=0\text{\ \ \ }(x\neq 0).  \label{wx}
\end{equation}
\end{lemma}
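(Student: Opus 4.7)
The plan is to deduce every claim in the lemma from the Bessel identities in Lemma \ref{Bessel}. All the key simplifications come down to recognizing expressions of the form $r^{-\alpha}[AJ_\alpha(kr)+BY_\alpha(kr)]$ (which are radial $k$-metaharmonic off the origin by (\ref{radial})) and applying the two Wronskians (iii) and (iv).

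First I would compute $f_T(T)$: directly from the definition, $f_T(T)=T^{N/2}[J_{N/2}(kT)Y_\alpha(kT)-Y_{N/2}(kT)J_\alpha(kT)]$, and since $N/2=\alpha+1$, Lemma \ref{Bessel}(iv) (rewritten) gives the bracket equal to $2/(\pi kT)$, so $f_T(T)=2T^\alpha/(\pi k)$. Next, I would differentiate $f_T$ in $\xi$; the identities in Lemma \ref{Bessel}(i),(ii) (applied to $t^{N/2}J_{N/2}(t)$ and $t^{N/2}Y_{N/2}(t)$) collapse this to
\[
f_T'(\xi)=k\xi^{N/2}\bigl[J_\alpha(k\xi)Y_\alpha(kT)-Y_\alpha(k\xi)J_\alpha(kT)\bigr].
\]
In particular $f_T'(T)=0$, and using Lemma \ref{Bessel}(iii) one computes the derivative at $T$ of the bracket to be $-2/(\pi T)<0$, so $f_T'$ changes sign from $+$ to $-$ at $\xi=T$; hence $T$ is a local maximum.

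The identity $w_\xi(\xi)=0$ follows immediately from $f_\xi(\xi)=2\xi^\alpha/(\pi k)$. For $w_\xi'(\xi)=0$, I differentiate $r^{-\alpha}f_r(\xi)$ in $r$: the $\alpha r^{-\alpha-1}$ term produces $-\alpha r^{-\alpha-1}f_r(\xi)$, and the derivative $\partial_r f_r(\xi)$ involves $J_\alpha'(kr)$ and $Y_\alpha'(kr)$, which I would eliminate using $J_\nu'(t)=(\nu/t)J_\nu(t)-J_{\nu+1}(t)$ (and the analogue for $Y_\nu$, both consequences of parts (i), (ii)). Evaluating at $r=\xi$, the piece without the $\alpha/(k\xi)$ factor involves $J_{N/2}(k\xi)Y_{N/2}(k\xi)-Y_{N/2}(k\xi)J_{N/2}(k\xi)=0$, while the remaining piece is $(\alpha/k\xi)$ times the Wronskian from (iv), producing exactly the term that cancels $-\alpha r^{-\alpha-1}f_\xi(\xi)$. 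This cancellation is the delicate step—one has to track constants and signs carefully—but both Wronskians feed in neatly. I expect this to be the main obstacle.

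For the PDEs in (\ref{wx}), no new work is needed. The function
\[
r^{-\alpha}f_r(\xi)=\xi^{N/2}r^{-\alpha}\bigl[Y_\alpha(kr)J_{N/2}(k\xi)-J_\alpha(kr)Y_{N/2}(k\xi)\bigr]
\]
is, for fixed $\xi$, of the form $r^{-\alpha}[AJ_\alpha(kr)+BY_\alpha(kr)]$ with constants $A,B$ depending on $\xi$, and so by (\ref{radial}) it is a radial $k$-metaharmonic function on $\mathbb{R}^N\setminus\{0\}$; hence $(\Delta+k^2)w_\xi(|x|)=(\Delta+k^2)(1/k^2)=1$ away from $0$. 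Similarly,
\[
\frac{f_T'(r)}{r^{\alpha+N/2}}=k\,r^{-\alpha}\bigl[J_\alpha(kr)Y_\alpha(kT)-Y_\alpha(kr)J_\alpha(kT)\bigr],
\]
which is again a radial $k$-metaharmonic function for $r>0$, giving the second identity in (\ref{wx}).
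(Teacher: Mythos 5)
Your proposal is correct and follows essentially the same route as the paper: everything reduces to the Bessel derivative identities and the two Wronskians in Lemma \ref{Bessel}, with the $\Delta+k^{2}$ identities read off from the fact that $r^{-\alpha}\bigl[AJ_{\alpha}(kr)+BY_{\alpha}(kr)\bigr]$ is radial $k$-metaharmonic. The only stylistic difference is in computing $w_{\xi}'$: applying $\tfrac{d}{dt}\bigl(t^{-\nu}J_{\nu}(t)\bigr)=-t^{-\nu}J_{\nu+1}(t)$ (and its $Y$-analogue) directly to $r^{-\alpha}J_{\alpha}(kr)$ and $r^{-\alpha}Y_{\alpha}(kr)$ yields $w_{\xi}'(r)=\tfrac{\pi}{2}\xi^{N/2}r^{-\alpha}\bigl[J_{N/2}(k\xi)Y_{N/2}(kr)-Y_{N/2}(k\xi)J_{N/2}(kr)\bigr]$ in one line, so the "delicate cancellation" you anticipate is automatic and no Wronskian is needed for $w_{\xi}'(\xi)=0$.
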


\begin{proof}
By Lemma \ref{Bessel}(iv), $f_{T}(T)=2T^{\alpha }/(\pi k)$, whence\ $w_{\xi
}(\xi )=0$. Lemma \ref{Bessel} also shows that 
\begin{equation*}
f_{T}^{\prime }(r)=kr^{N/2}\left\{ J_{\alpha }(kr)Y_{\alpha }(kT)-Y_{\alpha
}(kr)J_{\alpha }(kT)\right\} ,
\end{equation*}%
so $f_{T}$ has a local maximum at $T$. Further, 
\begin{equation}
w_{\xi }^{\prime }(r)=\frac{\pi }{2}\xi ^{N/2}r^{-\alpha }\left[
J_{N/2}(k\xi )Y_{N/2}(kr)-Y_{N/2}(k\xi )J_{N/2}(kr)\right] ,  \label{der}
\end{equation}%
so $w_{\xi }^{\prime }(\xi )=0$. Finally, (\ref{wx}) holds because functions
of the form (\ref{radial}) are $k$-metaharmonic.
\end{proof}

\bigskip

Noting that $f_{T}^{\prime }(T)=0$, we define $J_{1,T}<J_{2,T}$ to be the
values on either side of $T$ in the list 
\begin{equation*}
0,\text{\ first zero of }f_{T}^{\prime }\text{, second zero of }%
f_{T}^{\prime }\text{, ...,}
\end{equation*}%
and then define $t_{T}$ be the infimum of those $t\in \mathbb{R}$ for which 
\begin{equation*}
\lambda _{1}\left( \left\{ x\in A(J_{1,T},J_{2,T}):t<f_{T}(\left\vert
x\right\vert )<\frac{2T^{\alpha }}{\pi k}\right\} \right) \geq k^{2}.
\end{equation*}

\begin{proposition}
Suppose that $J_{\alpha }(kT)\neq 0$, where $T>0$.\newline
(a)\ If $T>R_{k}$,\ then for any $t\in \lbrack t_{T},2T^{\alpha }/(\pi k))$
there exist $\xi _{1,t}\in (J_{1,T},T)$ and $\xi _{2,t}\in (T,J_{2,T})$ such
that $f_{T}(\xi _{1,t})=t=f_{T}(\xi _{2,t})$ and $m|_{A(\xi _{1,t},\xi
_{2,t})}$ is the partial balayage onto $m$ of the measure 
\begin{equation*}
\frac{c_{k}(\xi _{2,t})-c_{k}(\xi _{1,t})}{d_{k}(T)}\sigma \left\vert
_{\partial B_{T}}\right. .
\end{equation*}%
Further, $\mathcal{F}_{k}^{1}(c\sigma |_{\partial B_{T}})=\emptyset $ when $%
c>\left( c_{k}(\xi _{2,t_{T}})-c_{k}(\xi _{1,t_{T}})\right) /d_{k}(T)$.%
\newline
(b) If $T<R_{k}$ and $f_{T}(0)<f_{T}(R_{k})$, then the same conclusion holds.%
\newline
(c) If $T<R_{k}$ and $t>f_{T}(0)\geq f_{T}(R_{k})$, then the same conclusion
holds.\newline
(d) If $T<R_{k}$ and $f_{T}(0)\geq t\geq f_{T}(R_{k})$, then there exists $%
\xi _{2,t}\in (T,R_{k}]$ such that $f_{T}(\xi _{2,t})=t$ and $m|_{B_{\xi
_{2,t}}}$ is the partial balayage onto $m$ of the measure 
\begin{equation*}
\frac{c_{k}(\xi _{2,t})}{d_{k}(T)}\sigma \left\vert _{\partial B_{T}}\right.
.
\end{equation*}%
Further, $\xi _{2,t}=R_{k}$ when $t=f_{T}(R_{k})$, and $\mathcal{F}%
_{k}^{1}(c\sigma |_{\partial B_{T}})=\emptyset $ when $%
c>c_{k}(R_{k})/d_{k}(T)$.
\end{proposition}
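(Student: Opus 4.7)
The plan is to apply Lemma \ref{unique} in each case by constructing an explicit nonnegative radial candidate for $u_{k,1}^{c\sigma|_{\partial B_T}}$ on a chosen bounded set $D$, and verifying that $-(\Delta + k^2)u = c\sigma|_{\partial B_T} - m|_D$ in $\mathbb{R}^N$ together with $\lambda_1(D) \geq k^2$; the condition $\mu \leq \rho$ outside $D$ is automatic since $\mu = c\sigma|_{\partial B_T}$ is supported in $\partial B_T \subset D$. The central building block is the radial function $w_\xi$ introduced above: $w_\xi(\xi) = w'_\xi(\xi) = 0$, and since $-(\Delta+k^2)w_\xi(|x|) = -1$ a short computation gives $w''_\xi(\xi) = 1$, so $w_\xi$ has a strict local minimum of value zero at $r = \xi$.

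For cases (a)-(c) the target is the annulus $D = A(\xi_{1,t}, \xi_{2,t})$, and I would set
\begin{equation*}
u(x) = \begin{cases} w_{\xi_{1,t}}(|x|), & \xi_{1,t} \leq |x| \leq T, \\ w_{\xi_{2,t}}(|x|), & T \leq |x| \leq \xi_{2,t}, \\ 0, & \text{otherwise.} \end{cases}
\end{equation*}
Because $w_\xi(T) = k^{-2}(1 - (\pi k/2)T^{-\alpha}f_T(\xi))$, continuity at $|x| = T$ is equivalent to $f_T(\xi_{1,t}) = f_T(\xi_{2,t}) = t$, which is the defining property of $\xi_{1,t}$ and $\xi_{2,t}$; $C^1$-matching at the outer boundaries is immediate from $w_\xi(\xi) = w'_\xi(\xi) = 0$. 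Thus $u$ is $C^1$ except across $\partial B_T$, and
\begin{equation*}
-(\Delta + k^2)u = \bigl(w'_{\xi_{1,t}}(T) - w'_{\xi_{2,t}}(T)\bigr)\sigma|_{\partial B_T} - m|_{A(\xi_{1,t},\xi_{2,t})}.
\end{equation*}
Substituting formula (\ref{der}) for $w'_\xi$ and invoking the Wronskian identities of Lemma \ref{Bessel}(iii)-(iv) reduces this jump coefficient to $(c_k(\xi_{2,t})-c_k(\xi_{1,t}))/d_k(T)$, matching the claimed $c$. Non-negativity of $u$ on each annular piece reduces to $w_\xi$ having its only zero at $r = \xi$ on the interval in question, which holds because $\xi_{i,t}$ sits strictly between $T$ and the nearest zero of $f'_T$ on that side. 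Finally, $\lambda_1(A(\xi_{1,t},\xi_{2,t})) \geq k^2$ is precisely what the definition of $t_T$ delivers for $t \geq t_T$, since the annulus coincides up to a null set with the super-level set $\{f_T(|\cdot|) > t\}$.

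For case (d) I would keep $w_{\xi_{2,t}}$ on $T \leq |x| \leq \xi_{2,t}$ but replace the inner piece by the smooth ansatz $u(x) = k^{-2} + b\,|x|^{-\alpha}J_\alpha(k|x|)$, where $b$ is chosen so that $u$ is continuous at $|x| = T$; this is well-defined because $J_\alpha(kT) \neq 0$, and the ansatz is smooth at the origin because only $J_\alpha$ appears. It satisfies $-(\Delta + k^2)u = -1$ on $B_T$; a direct monotonicity argument (based on $T < R_k < j_{\alpha+1,1}/k$) shows $u$ is increasing in $|x|$ on $B_T$, so $u \geq 0$ on $B_T$ is equivalent to $u(0) \geq 0$, which a short Bessel-asymptotic computation (using Lemma \ref{Bessel}(v)-(vi)) rewrites as $t \leq f_T(0)$, precisely the hypothesis of (d). The same jump computation at $|x| = T$ yields $c = c_k(\xi_{2,t})/d_k(T)$, and $\lambda_1(B_{\xi_{2,t}}) \geq \lambda_1(B_{R_k}) = k^2$ follows from $\xi_{2,t} \leq R_k$. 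The sharpness assertions come from the converse: rotational symmetry together with Lemma \ref{meset} force any putative $\omega_k^1(c\sigma|_{\partial B_T})$ to be a spherically symmetric open set containing $\partial B_T$, hence of the form $A(\xi_1,\xi_2)$ or $B_{\xi_2}$; the formulae above express $c$ in terms of these parameters, while Theorem \ref{mpsat1}(ii) forces $\lambda_1 \geq k^2$, capping $c$ at the quoted critical value.

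The main obstacle is the jump calculation itself: $w'_{\xi_1}(T) - w'_{\xi_2}(T)$ expands into Bessel terms of orders $\alpha$ and $\alpha + 1$, and collapsing it to $(c_k(\xi_2) - c_k(\xi_1))/d_k(T)$ requires careful combination of Lemma \ref{Bessel}(iii)-(iv) with the definitions of $c_k$ and $d_k$. A secondary technical point is verifying that $w_\xi \geq 0$ on the \emph{entire} interval between $\xi$ and $T$, rather than just locally near $\xi$, which is what the restrictions of $\xi_{i,t}$ to $(J_{1,T}, T) \cup (T, J_{2,T})$ and, in case (d), $\xi_{2,t} \leq R_k$ are designed to guarantee.
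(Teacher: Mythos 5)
Your proposal is correct and follows essentially the same route as the paper: the same piecewise radial function built from $w_{\xi_{1,t}}$ and $w_{\xi_{2,t}}$ (resp.\ the $k^{-2}+b\,|x|^{-\alpha}J_{\alpha}(k|x|)$ ansatz inside $B_T$ for case (d)), continuity at $\partial B_T$ via $f_T(\xi_{1,t})=f_T(\xi_{2,t})=t$, the same Wronskian-based jump computation yielding $c=(c_k(\xi_{2,t})-c_k(\xi_{1,t}))/d_k(T)$, and the same appeal to Theorem \ref{mpsat1} for the nonexistence assertions. The only cosmetic difference is that you channel the identification of the balayage through Lemma \ref{unique}, which uniformly covers the borderline cases $t=t_T$ and $\xi_{2,t}=R_k$, whereas the paper treats those endpoints by a separate eigenfunction-exclusion (Hopf lemma) argument.
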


\begin{proof}
(a) Since $f_{T}$ is strictly increasing on $(J_{1,T},T]$ and strictly
decreasing on $[T,J_{2,T})$\ we must have $t_{T}>\mathrm{max}%
\{f_{T}(J_{1,T}),f_{T}(J_{2,T})\}$, because neither $A(J_{1,T},T)$ nor $%
A(T,J_{2,T})$ satisfy the $k$-maximum principle, by (\ref{wx}). Hence, for
any $t\in \lbrack t_{T},2T^{\alpha }/(\pi k))$, there exist unique values $%
\xi _{1,t}\in (J_{1,T},T)$ and $\xi _{2,t}\in $ $(T,J_{2,T})$ such that $%
f_{T}(\xi _{1,t})=t=f_{T}(\xi _{2,t})$. Let%
\begin{equation*}
w_{t}(x)=\left\{ 
\begin{array}{cc}
w_{\xi _{1,t}}(\left\vert x\right\vert ) & (\xi _{1,t}<\left\vert
x\right\vert \leq T) \\ 
w_{\xi _{2},t}(\left\vert x\right\vert ) & (T<\left\vert x\right\vert <\xi
_{2,t}) \\ 
0 & (\text{elsewhere})%
\end{array}%
\right. .
\end{equation*}%
We know that $w_{t}$ is continuous because $w_{\xi }(\xi )=0$ and 
\begin{equation*}
w_{\xi _{1,t}}(T)=\frac{1}{k^{2}}\left\{ 1-\frac{\pi k}{2}r^{-\alpha
}f_{T}(\xi _{1,t})\right\} =\frac{1}{k^{2}}\left\{ 1-\frac{\pi k}{2}%
r^{-\alpha }f_{T}(\xi _{2,t})\right\} =w_{\xi _{2,t}}(T).
\end{equation*}%
Also, $w_{t}\geq 0$ because $f_{T}\leq f_{T}(T)=2T^{\alpha }/(\pi k)$ on $%
[J_{1,T},J_{2,T}]$.

It follows from (\ref{wx}) and (\ref{der}) that 
\begin{equation*}
(\Delta +k^{2})w_{t}=c\sigma \left\vert _{\partial B_{T}}\right.
+m\left\vert _{A(\xi _{1,t},\xi _{2,t})}\right. ,
\end{equation*}%
where%
\begin{eqnarray*}
c &=&w_{\xi _{2,t}}^{\prime }(T)-w_{\xi _{1,t}}^{\prime }(T) \\
&=&\frac{\pi T^{-\alpha }}{2}\xi _{2,t}^{N/2}\left[ J_{N/2}(k\xi
_{2,t})Y_{N/2}(kT)-Y_{N/2}(k\xi _{2,t})J_{N/2}(kT)\right] \\
&&-\frac{\pi T^{-\alpha }}{2}\xi _{1,t}^{N/2}\left[ J_{N/2}(k\xi
_{1,t})Y_{N/2}(kT)-Y_{N/2}(k\xi _{1,t})J_{N/2}(kT)\right] \\
&=&\frac{\pi T^{-\alpha }}{2}Y_{N/2}(kT)\left[ \xi _{2,t}^{N/2}J_{N/2}(k\xi
_{2,t})-\xi _{1,t}^{N/2}J_{N/2}(k\xi _{1,t})\right] \\
&&-\frac{\pi T^{-\alpha }}{2}J_{N/2}(kT)\left[ \xi _{2,t}^{N/2}Y_{N/2}(k\xi
_{2,t})-\xi _{1,t}^{N/2}Y_{N/2}(k\xi _{1,t})\right] .
\end{eqnarray*}%
Since $f_{T}(\xi _{1,t})=f_{T}(\xi _{2,t})$, we know that 
\begin{eqnarray*}
&&J_{\alpha }(kT)\left[ \xi _{2,t}^{N/2}Y_{N/2}(k\xi _{2,t})-\xi
_{1,t}^{N/2}Y_{N/2}(k\xi _{1,t})\right] \\
&=&Y_{\alpha }(kT)\left[ \xi _{2,t}^{N/2}J_{N/2}(k\xi _{2,t})-\xi
_{1,t}^{N/2}J_{N/2}(k\xi _{1,t})\right] .
\end{eqnarray*}%
Recalling that $J_{\alpha }(kT)\neq 0$, we now see that%
\begin{eqnarray*}
c &=&\frac{\pi T^{-\alpha }}{2J_{\alpha }(kT)}\left[ \xi
_{2,t}^{N/2}J_{N/2}(k\xi _{2,t})-\xi _{1,t}^{N/2}J_{N/2}(k\xi _{1,t})\right]
\\
&&\times \left[ Y_{N/2}(kT)J_{\alpha }(kT)-J_{N/2}(kT)Y_{\alpha }(kT)\right]
\\
&=&\frac{-T^{-N/2}}{kJ_{\alpha }(kT)}\left[ \xi _{2,t}^{N/2}J_{N/2}(k\xi
_{2,t})-\xi _{1,t}^{N/2}J_{N/2}(k\xi _{1,t})\right] =-\frac{c_{k}(\xi
_{2,t})-c_{k}(\xi _{1,t})}{d_{k}(T)},
\end{eqnarray*}%
by Lemma \ref{Bessel}(iv), (\ref{ck}) and (\ref{dk}).

Thus%
\begin{equation*}
(\Delta +k^{2})w_{t}=-\mu _{t}+m\left\vert _{_{A(\xi _{1,t},\xi
_{2,t})}}\right. ,\text{ \ where }\mu _{t}=\frac{c_{k}(\xi _{2,t})-c_{k}(\xi
_{1,t})}{d_{k}(T)}\sigma \left\vert _{\partial B_{T}}\right. ,
\end{equation*}%
and so $\mathcal{F}_{k}^{1}(\mu _{t})\neq \emptyset $. If $t\in
(t_{T},2T^{\alpha }/(\pi k))$, then $A(\xi _{1,t},\xi _{2,t})$ satisfies the 
$k$-maximum principle and it follows that $\mathcal{B}_{k}^{1}(\mu
_{t})=m\left\vert _{A(\xi _{1,t},\xi _{2,t})}\right. $. If $t=t_{T}$, the $k$%
-maximum principle fails to hold in $A(\xi _{1,t},\xi _{2,t})$. The function 
$u:=V_{k,1}^{\mu _{t}}-U_{k}^{m|_{A(\xi _{1,t},\xi _{2,t})}}$ is nonnegative
and $k$-metaharmonic on $A(\xi _{1,t},\xi _{2,t})$, so Proposition \ref{MP}
shows that either $u\equiv 0$ there or $u$ is a positive eigenfunction for $%
A(\xi _{1,t},\xi _{2,t})$. The latter is impossible, since $0\leq u\leq
U_{k}^{\mu _{t}}-U_{k}^{m|_{A(\xi _{1,t},\xi _{2,t})}}$ on\ $A(\xi
_{1,t},\xi _{2,t})$ and this last function has zero normal derivative at $%
\partial B_{\xi _{2,t}}$. Finally, it follows from Theorem \ref{mpsat1} that 
$\mathcal{F}_{k}^{1}(c\sigma |_{\partial B_{T}})=\emptyset $ when $c>\left(
c_{k}(\xi _{2,t_{T}})-c_{k}(\xi _{1,t_{T}})\right) /d_{k}(T)$.

(b)\textbf{\ }Since $T<R_{k}$ we know that $J_{1,T}=0$ and, since $%
f_{T}(0)<f_{T}(R_{k})$, it is again the case that $t_{T}>\mathrm{max}%
\{f_{T}(J_{1,T}),f_{T}(J_{2,T})\}$. The argument can now proceed as for part
(a).

(c) When $T<R_{k}$ and $f_{T}(0)\geq f_{T}(R_{k})$ it is again the case that 
$J_{1,T}=0$, but now $t_{T}=f_{T}(R_{k})$. When $t>f_{T}(0)$ the argument
proceeds as before.

(d) It remains to consider the case where $T<R_{k}$ and $f_{T}(0)\geq t\geq
f_{T}(R_{k})$. There exists $\xi _{2,t}\in (T,R_{k})$ such that $f_{T}(\xi
_{2,t})=t$. We define 
\begin{equation*}
w_{t}(x)=\left\{ 
\begin{array}{cc}
\dfrac{1}{k^{2}}\left\{ 1-\dfrac{\pi k}{2}\left\vert x\right\vert ^{-\alpha }%
\dfrac{J_{\alpha }(k\left\vert x\right\vert )}{J_{\alpha }(kT)}f_{T}(\xi
_{2,t})\right\} & (0<\left\vert x\right\vert \leq T) \\ 
w_{\xi _{2},t}(\left\vert x\right\vert ) & (T<\left\vert x\right\vert <\xi
_{2,t}) \\ 
0 & (\text{elsewhere on }\mathbb{R}^{N}\backslash \{0\})%
\end{array}%
\right.
\end{equation*}%
and note that $w_{t}$ is again continuous. Also, by Lemma \ref{Bessel}(v), 
\begin{equation*}
\lim_{x\rightarrow 0}w_{t}(\left\vert x\right\vert )=\dfrac{1}{k^{2}}\left\{
1-\dfrac{\pi k^{N/2}f_{T}(\xi _{2,t})}{2^{N/2}J_{\alpha }(kT)\Gamma (N/2)}%
\right\} \text{ \ and \ }f_{T}(0)=\frac{J_{\alpha }(kT)}{k^{N/2}}\frac{%
2^{N/2}\Gamma (N/2)}{\pi },
\end{equation*}%
so%
\begin{equation*}
\lim_{x\rightarrow 0}w_{t}(\left\vert x\right\vert )=\dfrac{1}{k^{2}}\left(
1-\frac{f_{T}(\xi _{2,t})}{f_{T}(0)}\right) =\dfrac{1}{k^{2}}\left( 1-\frac{t%
}{f_{T}(0)}\right) \geq 0.
\end{equation*}%
Since $w_{t}(x)$ increases with $\left\vert x\right\vert $ on $A(0,T)$ and
we noted previously that $w_{t}\geq 0$ on $A(T,\xi _{2,t})$, we now see that 
$w_{t}\geq 0$ everywhere. Further,%
\begin{equation*}
(\Delta +k^{2})w_{t}=c\sigma \left\vert _{\partial B_{T}}\right.
+m\left\vert _{B_{\xi _{2,t}}}\right. ,
\end{equation*}%
where%
\begin{eqnarray*}
c &=&w_{\xi _{2,t}}^{\prime }(T)-\frac{\pi T^{-\alpha }}{2}\frac{J_{N/2}(kT)%
}{J_{\alpha }(kT)}f_{T}(\xi _{2,t}) \\
&=&\frac{\pi \xi _{2,t}^{N/2}}{2T^{\alpha }}\left\{ J_{N/2}(k\xi
_{2,t})Y_{N/2}(kT)-\frac{J_{N/2}(kT)}{J_{\alpha }(kT)}J_{N/2}(k\xi
_{2,t})Y_{\alpha }(kT)\right\} \\
&=&\frac{\pi \xi _{2,t}^{N/2}}{2T^{\alpha }}\frac{J_{N/2}(k\xi _{2,t})}{%
J_{\alpha }(kT)}\left\{ J_{\alpha }(kT)Y_{N/2}(kT)-J_{N/2}(kT)Y_{\alpha
}(kT)\right\} \\
&=&-\frac{\xi _{2,t}^{N/2}}{kT^{N/2}}\frac{J_{N/2}(k\xi _{2,t})}{J_{\alpha
}(kT)}=-\frac{c_{k}(\xi _{2,t})}{d_{k}(T)},
\end{eqnarray*}%
by Lemma \ref{Bessel}(iv), (\ref{ck}) and (\ref{dk}). Thus%
\begin{equation*}
(\Delta +k^{2})w_{t}=-\mu _{t}+m\left\vert _{B_{\xi _{2,t}}}\right. ,\text{
\ where }\mu _{t}=\frac{c_{k}(\xi _{2,t})}{d_{k}(T)}\sigma \left\vert
_{\partial B_{T}}\right. ,
\end{equation*}%
and so $\mathcal{F}_{k}^{1}(\mu _{t})\neq \emptyset $. If $t>f_{T}(R_{k})$,
then $\xi _{2,t}<R_{k}$, so $B_{\xi _{2,t}}$ satisfies the $k$-maximum
principle and it follows that $\mathcal{B}_{k}^{1}(\mu _{t})=m\left\vert
_{B_{\xi _{2,t}}}\right. $. If $t=f_{T}(R_{k})$, then $\xi _{2,t}=R_{k}$ and
we can argue as in the proof of Proposition \ref{pmb}(b) to see that $%
\mathcal{B}_{k}^{1}(\mu _{t})=m\left\vert _{B_{\xi _{2,t}}}\right. $.
Finally, it follows from Theorem \ref{mpsat1} that $\mathcal{F}%
_{k}^{1}(c\sigma |_{\partial B_{T}})=\emptyset $ when $%
c>c_{k}(R_{k})/d_{k}(T)$.
\end{proof}

\bigskip

Our final goal of this section is to use the moving plane method to
establish a geometric property of $\omega _{k}^{1}(\mu )$. We begin with a
geometric lemma.

\begin{lemma}
\label{geo} Let $\Omega $ be a bounded open set in ${\mathbb{R}}^{N}$ that
contains $\overline{B}_{\varepsilon }$ for some $\varepsilon >0$. Suppose
further that, if $a+tz\in \Omega $ for some $t>\varepsilon $, where $z\in
\partial B_{1}$ and $a\cdot z=0$, then $a-sz\in \Omega $ for every $s\in
(-t,t-2\varepsilon )$. Then 
\begin{equation*}
B_{R}\subset \Omega \subset B_{R+2\varepsilon },\text{ \ where \ }R=\mathrm{%
min}\{\left\vert x\right\vert :x\in \partial \Omega \}.
\end{equation*}
\end{lemma}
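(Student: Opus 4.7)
The inclusion $B_R \subset \Omega$ is immediate: since $0 \in \overline{B}_\varepsilon \subset \Omega$, any segment from $0$ of length strictly less than $R$ cannot meet $\partial\Omega$, so it lies entirely in $\Omega$.

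For the reverse inclusion, I would argue by contradiction. Suppose $p \in \Omega$ with $|p| > R + 2\varepsilon$; by compactness of $\partial\Omega$, pick $q \in \partial\Omega$ with $|q| = R$, and consider the unit vector $z = (p - q)/|p - q|$. Setting $a = p - (p \cdot z) z$, one has $a \cdot z = 0$, and both $p$ and $q$ lie on the line $u \mapsto a + uz$, with $p = a + (p \cdot z)z$ and $q = a + (q \cdot z)z$. The plan is to apply the hypothesis with this $z$ and $a$ and with $t = p \cdot z$: this will deliver $a + uz \in \Omega$ for every $u \in (2\varepsilon - p \cdot z, p \cdot z)$, and the aim is to show that $u = q \cdot z$ lies strictly inside this interval, so that $q$ itself ends up in $\Omega$ -- contradicting $q \in \partial\Omega$.

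All of the verification collapses to the single inequality
\begin{equation*}
|p|^2 - R^2 > 2\varepsilon\, |p - q|,
\end{equation*}
which is immediate from the factoring $|p|^2 - R^2 = (|p| - R)(|p| + R)$, the hypothesis $|p| - R > 2\varepsilon$, and the triangle inequality $|p - q| \leq |p| + R$. Using the identity $(p+q) \cdot (p-q) = |p|^2 - R^2$, this reads $p \cdot z + q \cdot z > 2\varepsilon$, which is exactly the condition $q \cdot z > 2\varepsilon - p \cdot z$; combining with the evident $p \cdot z > q \cdot z$ (from $(p - q) \cdot z = |p - q| > 0$) also gives $p \cdot z > \varepsilon$, so the hypothesis does apply. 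The only genuine insight in the proof is the choice of direction $z = (p - q)/|p - q|$ aligned with the segment from the nearest boundary point to the putative far point; once that is made, the rest is pure bookkeeping, and I anticipate no real obstacle.
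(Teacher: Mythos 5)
Your proof is correct and follows essentially the same route as the paper's: both hinge on choosing the direction $z$ along the segment from a nearest boundary point to the putative far point and then applying the hypothesis on the resulting line. The only difference is organizational — you run it as a contradiction (showing the boundary point would be forced into $\Omega$), while the paper argues directly that the boundary point forces $s+2\varepsilon\geq t$ and then bounds $|y|^{2}\leq(R+2\varepsilon)^{2}$; the content is identical.
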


\begin{proof}
We choose $x\in \partial \Omega $ such that $R=|x|$, and let $y\in \Omega
\backslash B_{R}$. (If no such $y$ exists, there is nothing to prove.) In
proving that $|y|\leq R+2\varepsilon $ we may assume that $%
|x-y|>2\varepsilon $, for otherwise the desired inequality is trivial. Let 
\begin{equation*}
z=\frac{y-x}{|y-x|},\quad t=y\cdot z\text{ \ \ and \ }s=-x\cdot z,
\end{equation*}%
whence $y-tz=x+sz=a$, say. Since 
\begin{equation*}
\left\vert a\right\vert ^{2}+t^{2}=\left\vert y\right\vert ^{2}\geq
R^{2}=\left\vert x\right\vert ^{2}=\left\vert a\right\vert ^{2}+s^{2}
\end{equation*}%
and $s+t=|y-x|>2\varepsilon $, it follows that $t>\varepsilon $. Further,
since $y=a+tz\in \Omega $ and $x=a-sz\in \partial \Omega $, we know by
hypothesis that $\left\vert s+2\varepsilon \right\vert \geq t$, and so 
\begin{eqnarray*}
|y|^{2} &=&|a|^{2}+t^{2}\leq |a|^{2}+(s+2\varepsilon
)^{2}=|a|^{2}+s^{2}+4\varepsilon s+4\varepsilon ^{2} \\
&\leq &|x|^{2}+4\varepsilon |x|+4\varepsilon ^{2}=(R+2\varepsilon )^{2}.
\end{eqnarray*}
\end{proof}

\begin{theorem}
\label{geom}If $\mu (B_{\varepsilon }^{c})=0$ for some $\varepsilon >0$, and 
$\mathcal{F}_{k}^{1}(\mu )\neq \emptyset $, then 
\begin{equation*}
\omega _{k}^{1}(\mu )\subset B_{R+2\varepsilon },\text{ \ \ where \ \ }R=%
\mathrm{min}\{|x|:x\in \partial \omega _{k}^{1}(\mu )\}.
\end{equation*}
\end{theorem}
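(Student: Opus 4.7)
My plan is to verify the reflection hypothesis of Lemma \ref{geo} for $\Omega := \omega_k^1(\mu)$ by the moving plane method, and then quote Lemma \ref{geo}. Fix a unit vector $z$ and a level $\tau \geq \varepsilon$; write $T_\tau = \{x\cdot z = \tau\}$, $H_\tau^\pm = \{x\cdot z \gtrless \tau\}$, and let $R_\tau$ be reflection across $T_\tau$. The goal is to show $R_\tau(\omega_k^1(\mu) \cap H_\tau^+) \subset \omega_k^1(\mu)$; since $z$ is arbitrary, this gives the required reflection property of Lemma \ref{geo}.

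To obtain a strict spectral inequality I would first work with the scaled measure $t\mu$ for $t \in (0,1)$. By Theorem \ref{mpsat1}(i) applied with $\eta = (1-t)\mu$, every component of $\omega_t := \omega_k^1(t\mu)$ satisfies the $k$-maximum principle, so $\lambda_1(\omega_t) > k^2$. Write $u_t := u_{k,1}^{t\mu}$; the Structure Theorem gives $-(\Delta+k^2)u_t = (t\mu)|_{\omega_t} - m|_{\omega_t}$. Set $\phi(x) := u_t(x) - u_t(R_\tau x)$ on $H_\tau^-$. Since $\mathrm{supp}(\mu) \subset \overline{B}_\varepsilon \subset \overline{H_\tau^-}$, the pushforward $R_\tau^*(t\mu)$ charges only $\overline{H_\tau^+}$, so on $H_\tau^-$
\[
-(\Delta+k^2)\phi \;=\; (t\mu)|_{\omega_t \cap H_\tau^-} \;-\; m|_{\omega_t \cap H_\tau^-} \;+\; m|_{R_\tau(\omega_t \cap H_\tau^+)}.
\]
Suppose $D := \{x \in H_\tau^- : \phi(x) < 0\}$ were non-empty. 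On $D$ we have $u_t(R_\tau x) > u_t(x) \geq 0$, so $R_\tau x \in \omega_t$, giving $D \subset R_\tau(\omega_t \cap H_\tau^+)$. A short case check (whether a point of $D$ lies inside or outside $\omega_t$) shows the three terms on the right combine to a non-negative measure on $D$, so $\psi := -\phi$ is $k$-metasubharmonic there. Because $u_t$ is continuous with compact support, $\phi$ vanishes on $T_\tau$ and at infinity, hence $\psi \leq 0$ on $\partial D$. Since $D \subset R_\tau \omega_t$, Lemma \ref{evnoncon1} yields $\lambda_1(D) \geq \lambda_1(\omega_t) > k^2$, so Proposition \ref{MP} forces $\psi \leq 0$ on $D$, contradicting $D \neq \emptyset$. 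Hence $\phi \geq 0$ on $H_\tau^-$, i.e.\ $R_\tau(\omega_t \cap H_\tau^+) \subset \omega_t$.

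Finally, letting $t \nearrow 1$, the argument in the proof of Theorem \ref{mpsat1}(ii) gives $\omega_t \nearrow \omega_k^1(\mu)$, and the reflection property passes to the union: $R_\tau(\omega_k^1(\mu) \cap H_\tau^+) = \bigcup_t R_\tau(\omega_t \cap H_\tau^+) \subset \omega_k^1(\mu)$. Lemma \ref{geo} then yields the stated containment $\omega_k^1(\mu) \subset B_{R+2\varepsilon}$. The main obstacle is the sign of $-(\Delta+k^2)\phi$ on the piece $D \setminus \omega_t$, where $u_t$ itself vanishes and the defining PDE gives nothing; here the term $+m|_{R_\tau\omega_t}$ is precisely what supplies the needed non-negativity. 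The second delicate point is the strict inequality $\lambda_1(\omega_t) > k^2$, which is unavailable for $\omega_k^1(\mu)$ itself and is the reason for the preliminary perturbation $\mu \mapsto t\mu$.
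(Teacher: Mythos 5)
Your moving-plane strategy is sound and reaches the right reflection inequality, but it is a genuinely different mechanism from the paper's. The paper never perturbs $\mu$ and never invokes the $k$-maximum principle: it uses the extremal characterization of $v=u_{k,1}^{\mu}$ from Remark \ref{umu} as the \emph{smallest} element of the collection (\ref{umuset}), observes via Kato's inequality and the usual pasting argument that
\begin{equation*}
w=\left\{\begin{array}{ll} v & \text{in } H_{-}\\ \min\{v,v\circ\mathrm{refl}\} & \text{in } H_{+}\end{array}\right.
\end{equation*}
again lies in that collection, and concludes $w=v$, i.e.\ $v(\overline{x})\geq v(x)$ on $H_{+}$, in one step. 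Your route instead buys the comparison $u_t\geq u_t\circ R_\tau$ on $H_\tau^-$ from the $k$-maximum principle on the reflected set, which forces you to first secure the strict bound $\lambda_1(\omega_k^1(t\mu))>k^2$ via the perturbation $t\mu$, $t<1$ (correctly, through Lemma \ref{meset} and Theorem \ref{mpsat1}(i)), and then pass to the limit $t\nearrow 1$ using $\omega_k^1(t\mu)\nearrow\omega_k^1(\mu)$. That is more machinery, but it is the classical Serrin-style argument and it does work; the paper's obstacle-problem argument is shorter and avoids the spectral input entirely.

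Two technical points need tightening. First, $u_t$ is in general only lower semicontinuous on $\mathrm{supp}(\mu)$ (e.g.\ for a point mass), so $D=\{\phi<0\}$ need not be open and "$\phi$ vanishes on $\partial D$ by continuity" is not quite available; the clean fix is to run the maximum principle on the open set $G=R_\tau(\omega_t\cap H_\tau^+)\supset D$, where the same sign computation gives $-(\Delta+k^2)\phi\geq 0$, where $\lambda_1(G)\geq\lambda_1(\omega_t)>k^2$ by reflection invariance and monotonicity of $\lambda_1$, and where the boundary values of $-\phi$ are controlled because $u_t\circ R_\tau$ is continuous on $\overline{H_\tau^+}$ (take $\tau>\varepsilon$) and vanishes on $\partial\omega_t$, while $u_t\geq 0$ is l.s.c. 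Second, Lemma \ref{geo} is stated with the hypothesis $\overline{B}_\varepsilon\subset\Omega$, which generally fails for $\Omega=\omega_k^1(\mu)$ (the swept region need not cover $B_\varepsilon$); the paper deals with this by applying the lemma to $\Omega_\delta=B_{\varepsilon+\delta}\cup\omega_k^1(\mu)$, verifying the segment condition for $\Omega_\delta$, and letting $\delta\to 0$. You should either do the same or justify invoking the lemma without that hypothesis.
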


\begin{proof}
Let $\delta >0$ and $\Omega _{\delta }=B_{\varepsilon +\delta }\cup \omega
_{k}^{1}(\mu )$. Suppose that $a+tz\in \Omega _{\delta }$ for some $%
t>\varepsilon +\delta $, where $z\in \partial B_{1}$ and $a\cdot z=0$. Let $%
H $ be a hyperplane with normal vector $z$ such that its complementary
components $H_{-},H_{+}$ satisfy $B_{\varepsilon +\delta }\subset H_{-}$ and 
$a+tz\in H_{+}$, and let $\overline{x}$ denote the image of a point $x$
under reflection in $H$. If $v$ is the smallest element of the collection
defined in (\ref{umuset}) with $\rho =1$, then the function 
\begin{equation*}
w(x)=\left\{ 
\begin{array}{cc}
v(x) & (\text{ }x\in H_{-}) \\ 
\min \{v(x),v(\overline{x})\} & (\text{ }x\in H_{+})%
\end{array}%
\right. ,
\end{equation*}%
which also belongs to that collection, must equal $v$. Since $v(a+tz)>0$, we
see that $v(\overline{a+tz})>0$, and so $\overline{a+tz}\in \Omega $. It
follows that the assumptions of Lemma \ref{geo} are satisfied, and so $%
\omega _{k}^{1}(\mu )\subset \Omega _{\delta }\subset B_{R+2(\varepsilon
+\delta )}$. The result follows on letting $\delta \rightarrow 0$.
\end{proof}

\begin{remark}
\label{closcons} Theorem \ref{geom} shows that the assumptions that $\omega
_{k}^{\rho }(\mu _{n})\subset K$ in Lemma \ref{mon4}, and that $\omega
_{k}^{\rho }(\mu _{n})\subset K$ in Theorem \ref{wcont}, are redundant when $%
\rho $ is constant. Likewise, it follows that $T\in I$ in Theorem \ref{HS2}
above. To see this, let $W=\cup _{t<T}\omega _{k}^{1}(\mu _{t})$. Then $%
\omega _{k}^{1}(\mu _{t})\nearrow W$ as $t\nearrow T$, by (\ref{monb}), and $%
W$ is bounded by Theorem \ref{geom}. Since 
\begin{equation*}
tU_{k}^{\eta }+U_{k}^{m|_{\Omega }}\geq U_{k}^{m|_{\omega _{k}^{1}(\mu
_{t})}}\text{ \ with equality precisely on }\omega _{k}^{1}(\mu _{t})^{c}%
\text{ \ \ \ }(0<t<T),
\end{equation*}%
we see from (\ref{mona}) that 
\begin{equation*}
TU_{k}^{\eta }+U_{k}^{m|_{\Omega }}\geq U_{k}^{m|_{W}}\text{ \ with equality
precisely on }W^{c}.
\end{equation*}%
It follows that $\mathcal{F}_{k}^{1}(\mu _{T})\neq \emptyset $ and $W=\omega
_{k}^{1}(\mu _{T})$.
\end{remark}

\newpage

\noindent{\bf Declarations}
\medskip 

\noindent{\bf Ethical Approval} Not applicable 
\medskip 

\noindent{\bf Funding} Not applicable 

\medskip 

\noindent{\bf Availability of data and materials} Not applicable

\bigskip

\noindent \textit{Stephen J. Gardiner}

\noindent School of Mathematics and Statistics

\noindent University College Dublin

\noindent Dublin 4, Ireland

\noindent stephen.gardiner@ucd.ie

\bigskip

\noindent \textit{Tomas Sj\"{o}din}

\noindent Department of Mathematics

\noindent Link\"{o}ping University

\noindent 581 83, Link\"{o}ping

\noindent Sweden

\noindent tomas.sjodin@liu.se

\end{document}